\newcommand{\mres}{\mathbin{\vrule height 1.6ex depth 0pt width
0.13ex\vrule height 0.13ex depth 0pt width 1.3ex}}
\newtheorem{thm}{Theorem}[section]
\newtheorem{lemma}[thm]{Lemma}
\newtheorem{rmk}[thm]{Remark}
\newtheorem{prop}[thm]{Proposition}
\numberwithin{equation}{section}
\begin{document}
\title{Self-shrinkers whose asymptotic cones fatten }
\author{Daniel Ketover}\address{Rutgers University\\  Busch Campus - Hill Center \\ 110 Freylinghausen Road, Piscataway NJ 08854 USA}
\thanks{The author was partially supported by NSF DMS-1401996.}
 \email{dk927@math.rutgers.edu}
\maketitle

\begin{abstract}
For each positive integer $g$ we use variational methods to construct a genus $g$ self-shrinker $\Sigma_g$ in $\mathbb{R}^3$ with entropy less than $2$ and prismatic symmetry group $\mathbb{D}_{g+1}\times\mathbb{Z}_2$.   For $g$ sufficiently large,  the self-shrinker $\Sigma_g$ has two graphical asymptotically conical ends and the sequence $\Sigma_g$ converges on compact subsets to a plane with multiplicity two as $g\to\infty$. Angenent-Chopp-Ilmanen conjectured the existence of such self-shrinkers in 1995 based on numerical experiments.  Using these surfaces as initial conditions for large $g$,  we obtain examples of mean curvature flows in $\mathbb{R}^3$ with smooth initial non-compact data that evolve non-uniquely after their first singular time.

\end{abstract}

\section{Introduction}

A hypersurface $\Sigma\subset\mathbb{R}^3$ is called a \emph{self-shrinker} if 
\begin{equation}
H_\Sigma=\frac{\langle x, \mathbf{n}\rangle}{2},
\end{equation}
where $H_\Sigma$ denotes the mean curvature of $\Sigma$, and $\mathbf{n}_\Sigma$ is a choice of unit normal and $x$ denotes the position vector. 

If $\Sigma$ is a self-shrinker,  then it is the $t=-1$ time-slice of the ancient mean curvature flow (MCF) evolving by homothety
\begin{equation}
\Sigma_t = \sqrt{-t}\Sigma\mbox{ for } t\leq 0.
\end{equation}

By Huisken's monotonicity formula \cite{Huisken} and an argument of White and Ilmanen,  self-shrinkers are realized as blowups at singularities of the MCF.    The sphere of radius two $\mathbb{S}^2_*$ centered about the origin is a self-shrinker,  as well as all cylinders $\mathbb{S}^1_*\times\mathbb{R}$ of radius $\sqrt{2}$ centered about an axis through the origin.  Angenent discovered an embedded rotationally symmetric self-shrinking torus \cite{A}.

For $y\in\mathbb{R}^3$,  and $\tau>0$,  consider the functional

\begin{equation}
F_{y,\tau}(\Sigma) =\frac{1}{4\pi\tau} \int_\Sigma e^{\frac{-|x-y|^2}{4\tau}} d\mu_x.
\end{equation}

The surface $\Sigma$ is a self-shrinker if and only if $\Sigma$ is a critical point for the functional $F:=F_{0,1}$,  which we call the ``Gaussian area."   In other words,  self-shrinkers are precisely the minimal surfaces in the Gaussian metric $(\mathbb{R}^3,\frac{1}{4\pi} e^{-|x|^2/4}\delta_{ij})$.   The Gaussian metric is incomplete and has Ricci curvature approaching $-\infty$ as $|x|\rightarrow\infty$.   On the other hand,  the metric satisfies a Frankel-type property \cite{F} in that any two self-shrinkers intersect.    In this sense,  there is an analogy between minimal surfaces in $\mathbb{S}^3$ and self-shrinkers (cf. \cite{KZ}).

Colding-Minicozzi \cite{CM2} introduced the entropy functional,  which measures the complexity of $\Sigma$ at all scales:
\begin{equation}
\lambda(\Sigma) = \sup_{y\in\mathbb{R}^3, \tau\in\mathbb{R}^+} F_{y,\tau}(\Sigma) = \sup_{y\in\mathbb{R}^3, \tau\in\mathbb{R}^+} F_{0,1}(\tau(\Sigma-y)).
\end{equation}

Entropy is non-increasing along the MCF,  and when $\Sigma$ is a self-shrinker,  the entropy is realized
\begin{equation}
\lambda(\Sigma) = F_{0,1}(\Sigma)=F(\Sigma).
\end{equation}

The normalization is chosen so that $\lambda(\mathbb{R}_*^2) = 1$.  Stone \cite{S} computed that $\lambda(\mathbb{S}^2_*)=4/e\approx 1.47$ and $\lambda(\mathbb{S}^1_*\times\mathbb{R})=\sqrt{2\pi/e}\approx 1.52$.   Bernstein-Wang \cite{BW} showed that any self-shrinker has entropy at least that of $\mathbb{S}^2_*$ and that the cylinder $\mathbb{S}^1_*\times\mathbb{R}$ has the third lowest entropy among self-shrinkers.  

So far, the only non-compact self-shrinkers aside from the cylinder and plane have been obtained by Kapouleas, Kleene and M\o ller \cite{KKM} and independently Nguyen (\cite{Ng1}, \cite{Ng2}, \cite{Ng3}).  Their family has antiprismatic symmetry and consists of high genus surfaces with one conical end.  The surfaces resemble a desingularization of the self-shrinking sphere and plane in the limit that the genus tends to infinity.   Later this family was extended by Buzano-Nguyen-Schulz for all genera using a variational method\footnote{It has yet to be established that the two families coincide.} \cite{BNS}.   

In this paper,  we construct new examples of non-compact self-shrinkers in $\mathbb{R}^3$.  The family was predicted by numerical experiments of Angenent-Chopp-Ilmanen \cite{ACI} (see also \cite{I}) in 1995.  In his ICM lecture in 2002, White sketched work with Ilmanen toward constructing such self-shrinkers \cite{W}.

Let $H$ denote the $xy$-plane $\{z=0\}$ and for any $0<r<\infty$ let us set 
\begin{equation}
C_r = \{(x,y,0)\subset\mathbb{R}^3\;|\; x^2+y^2=r^2\}\subset H.
\end{equation}

Recall that an \emph{asymptotically conical} surface $\Sigma$ embedded in $\mathbb{R}^3$ is one for which $\lim_{\rho\to 0^+}\rho\Sigma = C$ where $C$ is a regular cone in $\mathbb{R}^3$ and the convergence is in $C^\infty_{loc}(\mathbb{R}^3\setminus (0,0,0))$.  In this case, the \emph{link} of $\Sigma$ is defined to be $C\cap\mathbb{S}^2$.  

The following is our main result:

\begin{thm}[Self-shrinking doubled plane]\label{main}
For each integer $g>0$ there exists an embedded self-shrinker $\Sigma_g\subset\mathbb{R}^3$ so that the following hold:
\begin{enumerate}[(a)]
\item $\lambda(\Sigma_g)<2$ and $\lim_{g\rightarrow\infty} \lambda(\Sigma_g)=2$. \label{1}
\item $\Sigma_g$ is invariant under the prismatic group $\mathbb{D}_{g+1}\times\mathbb{Z}_2\subset O(3)$.\label{2}
\item $\Sigma_g$ has genus $g$.\label{3}
\item $\Sigma_g\to 2H$ in the sense of varifolds as $g\to\infty$.\label{4}
\item For $g$ large,  $\Sigma_g$ has two asymptotically conical ends, $E_1\subset\{z>0\}$ and $E_2\subset\{z<0\}$, which are graphical over $H$ and whose links converge in the $C^0$-topology to the equator $\mathbb{S}^2\cap\{z=0\}$. \label{5}
\item  For any subsequence $g\to\infty$, after taking a further subsequence there exists $r>0$ so that on any compact subset $K$ of $H\setminus C_{r}$, the surfaces $\Sigma_g$ can be expressed as a union of two normal graphs over $K$ that each converges smoothly to $K$ as $g\to\infty$.\label{6}
\end{enumerate}
\end{thm}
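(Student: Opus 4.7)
My plan is to construct $\Sigma_g$ as a critical point of the Gaussian area $F$ via an equivariant min-max procedure among surfaces invariant under the prismatic group $G_g := \mathbb{D}_{g+1}\times\mathbb{Z}_2\subset O(3)$. Since self-shrinkers are precisely the critical points of $F$, the min-max critical point is automatically a self-shrinker, and its entropy equals $F(\Sigma_g)$, which is bounded above by the min-max width. The heart of the argument is to exhibit, for each fixed $g$, a $G_g$-equivariant sweepout whose maximum Gaussian area lies strictly between $4/e$ and $2$ and whose homotopy class detects a nontrivial critical point.

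\textbf{Sweepout construction and min-max.} The central slice of the sweepout is a ``doubled plane with $g+1$ handles'': two parallel horizontal planes $\{z = \pm h\}$, whose combined Gaussian area is $2 e^{-h^2/4} < 2$, joined by $g+1$ small catenoidal necks centered at the vertices of a regular $(g+1)$-gon at radius $R$ in $\{z=0\}$. For fixed $h > 0$ and suitable neck radius and $R$, the Gaussian area cost of the necks is subdominant, yielding a genus $g$ surface with $F < 2$. I would then interpolate, in a $G_g$-equivariant manner, between this model and boundary configurations of much smaller Gaussian area (e.g., pushing the necks to infinity so the surface breaks into two disjoint planes that are then sent to infinity along the $z$-axis). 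Applying the equivariant Simon--Smith min-max machinery in the Gaussian setting (as in \cite{KZ} and related work) to this sweepout yields a smooth embedded $G_g$-invariant self-shrinker $\Sigma_g$ with $\lambda(\Sigma_g) < 2$, proving (a) and (b). Because the width exceeds $\sqrt{2\pi/e}$ and the Gaussian Frankel-type property forbids disconnected critical points, $\Sigma_g$ cannot be one of the classical low-entropy shrinkers. For (c), an equivariant genus bound provides $\mathrm{genus}(\Sigma_g) \le g$, while the $G_g$-symmetry forces at least $g+1$ handles to cross $\{z=0\}$, giving equality.

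\textbf{Passing to the limit $g \to \infty$ and the asymptotics.} For (d)--(f), the uniform bound $\lambda(\Sigma_g) < 2$ together with Colding--Minicozzi compactness yields subsequential varifold convergence $\Sigma_g \to \Sigma_\infty$, a self-shrinker with $\lambda(\Sigma_\infty) \le 2$. Since the order of $\mathbb{D}_{g+1}$ tends to infinity, $\Sigma_\infty$ inherits $SO(2) \times \mathbb{Z}_2$-symmetry; combined with $\lim_g F(\Sigma_g) = 2$, established by explicit test sweepouts whose maximum Gaussian area tends to $2$, this forces $\Sigma_\infty = 2H$, giving (d). For (f), curvature estimates for embedded self-shrinkers with bounded entropy and genus deliver smooth two-sheeted convergence on compact subsets of $H$ disjoint from a singular circle, where the two local sheets are interchanged by the $\mathbb{Z}_2$-reflection. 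For (e), once (f) shows the two sheets are graphical over $H$ outside a large compact set, the asymptotic conical structure and the $C^0$-convergence of the links to the equator follow from the standard asymptotic analysis of graphical self-shrinking ends.

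\textbf{Main obstacle.} The most delicate step is designing the sweepout so that its maximum $F$-value is provably strictly less than $2$ while still forcing genus $g$: the Gaussian area cost of the $g+1$ catenoidal necks, and of every interpolation step on the sweepout, must fit inside the budget $2 - 2 e^{-h^2/4}$. Closely related is ruling out loss of genus in the min-max limit, which requires a tight coupling of the equivariant genus bound, the entropy constraint, and the Frankel-type intersection property to exclude lower-genus or higher-multiplicity degenerations.
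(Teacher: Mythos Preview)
Your overall strategy is in the right spirit, but there are two genuine gaps that the paper's argument is built around and that your sketch does not address.

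\textbf{The sweepout must be two-parameter and ``flipping''.} As you describe it, your sweepout is a single one-parameter family of $G_g$-invariant genus $g$ surfaces interpolating between a doubled plane with necks and small-area boundary configurations. Nothing in that setup prevents the min-max from simply returning the round self-shrinking sphere $\mathbb{S}^2_*$, which is $G_g$-invariant, has entropy $4/e\approx 1.47<2$, and has $G_g$-equivariant index $1$. Your assertion that ``the width exceeds $\sqrt{2\pi/e}$'' is precisely the point that needs a mechanism. The paper instead builds a \emph{two}-parameter sweepout $\{\Sigma_{s,t}\}$ whose restriction to the left and right sides of $\partial I^2$ is the optimal sphere foliation with \emph{opposite orientations}. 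A Lusternik--Schnirelmann argument then shows $\omega_2(G_g)>\lambda(\mathbb{S}^2_*)$: if equality held, one would obtain a path of surfaces in a small varifold neighborhood of $\mathbb{S}^2_*$ joining $\mathbb{S}^2_*$ to itself with reversed orientation, which is impossible. This orientation-reversal device, together with the index bound ruling out the cylinder, is what forces the min-max limit to be a new self-shrinker. Getting the maximal $F$-value of this two-parameter family below $2$ is substantially harder than for a single slice and occupies Sections~3--4 of the paper (the ``inversion'' through cones and cylinders, plus the catenoid estimate); your description of the interpolation does not touch this.

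\textbf{The genus lower bound.} Your claim that ``the $G_g$-symmetry forces at least $g+1$ handles to cross $\{z=0\}$'' is not correct as stated: the sphere and cylinder are $G_g$-invariant with no handles at all. The min-max genus bound gives only $\mathrm{genus}(\Sigma_g)\le g$, and one must separately rule out genus collapse. The paper does this by classifying all $G_g$-equivariant neckpinches on the sweepout surfaces via the Riemann--Hurwitz formula in the orbifold $\mathbb{R}^3/\mathbb{D}_{g+1}$, showing that any sequence of such surgeries yields either a genus $g$ piece or only spheres; since Brendle's classification rules out genus $0$, the genus is exactly $g$. Relatedly, in your limit analysis you assert $\lim_g F(\Sigma_g)=2$ and then deduce $\Sigma_\infty=2H$; the paper runs this the other way, first eliminating $\mathbb{S}^2_*$, $\mathbb{S}^1_*\times\mathbb{R}$, the Angenent torus, and immersed limits (using equivariant index bounds, the surgery classification, and entropy arguments), concluding $\Sigma_\infty=2H$, and only then reading off $\lambda(\Sigma_g)\to 2$.
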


\begin{rmk} The numerics of Chopp \cite{C} and Angenent-Chopp-Ilmanen \cite{ACI} (cf. White \cite{W}) suggest already for $g=1$ that $\Sigma_g$ has two ends.  
\end{rmk}

 We conjecture that the surface $\Sigma_g$ minimizes entropy among genus $g$ self-shrinkers.   If true, $\Sigma_g$ could be considered the genus $g$ surface of least geometric complexity in $\mathbb{R}^3$.     In the same way, there is a long-standing conjecture of Kusner \cite{Kusner} that the Lawson surface $\xi_{g,1}$ in $\mathbb{S}^3$ minimizes the Willmore energy among genus $g$ surfaces.   
 
 The existence of an entropy-minimizer among genus $g$ self-shrinkers was obtained by Sun-Wang (Corollary 1.4 in \cite{SW}).  For $g=0$, Bernstein-Wang \cite{BW} (see also \cite{B} and \cite{KZ}) showed that the round sphere minimizes entropy among all embedded two-spheres.  By \cite{ChuSun}, the entropy-minimizer among genus $1$ self-shrinkers is not the Angenent torus\footnote{It is an interesting question whether the genus $1$ self-shrinker obtained in \cite{SW} coincides with $\Sigma_1$.}.


\subsection{Fattening of MCF}
When the flows $\Sigma_g(t) := \sqrt{-t}\Sigma_g$ reach their singularity at $t=0$,  they consist of a double-lobbed cone $\mathcal{C}(\Sigma_g)$.   One may extend the flow for positive times past this time using a weak notion of the mean curvature flow (\cite{CGG}, \cite{ES}).   Such resolutions of cones are often modeled by self-expanders.   A \emph{self-expander} is a hypersurface $\Sigma\subset\mathbb{R}^3$ satisfying
\begin{equation}
H_\Sigma=\frac{\langle x, \mathbf{n}\rangle}{2},
\end{equation}
and give rise to immortal flows by outward homothety $\Sigma_t=\sqrt{t}\Sigma$ for $t\geq 0$.

For $g$ large,  the $t=0$ limit of the MCF beginning at $\Sigma_g$ is a very wide-brimmed double cone.   Angenent-Chopp-Ilmanen \cite{ACI} and Helmensdorfer \cite{H} showed that rotationally symmetric double cones $\{x^2+y^2=\delta z^2\}$ that are sufficiently wide (i.e. $\delta$ is sufficiently small) admit multiple \emph{connected} self-expanding annuli as evolutions\footnote{These are analogous to the stable and unstable catenoids in $\mathbb{R}^3$ bounded between two circles in parallel planes.}.  More generally, Bernstein-Wang (Lemma 8.2 in \cite{BW2}) showed that cones contained in $\{x^2+y^2\leq \delta z^2\}$ for $\delta$ sufficiently small also admit a connected self-expanding evolution.  On the other hand, Ding \cite{D} (based on a sketch of Ilmanen \cite{I}) showed that $\mathcal{C}(\Sigma_g)$ also admits disconnected self-expander evolutions.  

Thus we obtain

\begin{thm}[Fattening]\label{main2}
For $g$ large enough,  the level set flow with (smooth) initial condition given by $\Sigma_g$ fattens.  
\end{thm}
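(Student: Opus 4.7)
The plan is to continue the shrinking flow past its singular time $t=0$ in two geometrically incompatible ways --- one connected, one disconnected --- both of which are legitimate weak evolutions from $\Sigma_g$, thereby forcing the level set flow to fatten. By Theorem \ref{main}, for $g$ large the self-shrinker $\Sigma_g$ is smooth with two graphical asymptotically conical ends whose links converge in $C^0$ to the equator $\mathbb{S}^2 \cap \{z=0\}$. Thus the self-similar shrinking flow $\Sigma_g(t) = \sqrt{-t}\,\Sigma_g$, $t \in [-1,0)$, becomes singular at $t=0$ with tangent flow the doubled cone $\mathcal{C}(\Sigma_g)$, and $\mathcal{C}(\Sigma_g)$ is arbitrarily wide-brimmed: for any prescribed flatness threshold one can choose $g$ sufficiently large that $\mathcal{C}(\Sigma_g)$ satisfies it.

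Next, I would produce two genuinely distinct self-expanding evolutions of $\mathcal{C}(\Sigma_g)$. For $g$ large, applying Bernstein-Wang's Lemma 8.2 in \cite{BW2} to $\mathcal{C}(\Sigma_g)$ produces a smooth \emph{connected} self-expander $E_g^c$ asymptotic to $\mathcal{C}(\Sigma_g)$. Independently, Ding's construction in \cite{D} (after the sketch of Ilmanen \cite{I}) yields a \emph{disconnected} self-expander $E_g^d$ asymptotic to the same cone, with one component in each lobe. Each expander induces a smooth MCF $t \mapsto \sqrt{t}\,E_g^{c/d}$ for $t>0$, which concatenates with $\sqrt{-t}\,\Sigma_g$ to give two integral Brakke flows $\mathcal{F}_c$ and $\mathcal{F}_d$, both starting from the smooth surface $\Sigma_g$ at $t=-1$.

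Finally, I would invoke the inclusion principle to deduce fattening. Both $\mathcal{F}_c$ and $\mathcal{F}_d$ are supported inside the level set flow of $\Sigma_g$, since the level set flow is the maximal weak evolution with a given initial datum. Because $E_g^c$ is connected while $E_g^d$ has two components, the supports $\sqrt{t}\,E_g^c$ and $\sqrt{t}\,E_g^d$ are distinct smooth hypersurfaces for each $t>0$; their union is already not a smooth hypersurface, and any open neighborhood swept out by intermediate weak flows lies inside the level set flow, so the latter has non-empty interior at positive times --- that is, it fattens.

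The main obstacle is arranging a \emph{single} $g$ large enough to simultaneously satisfy both the Bernstein-Wang hypothesis (cone flat enough to admit a connected expander) and the Ding hypothesis (cone with two well-separated lobes admitting a disconnected expander). Both requirements are quantitative conditions on $\mathcal{C}(\Sigma_g)$, and both follow for large $g$ from the $C^0$-convergence of the links in Theorem \ref{main}(e); the remaining work is to reconcile the different formulations in \cite{BW2} and \cite{D} and to verify that the large-$g$ regime is compatible with both.
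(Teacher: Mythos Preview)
Your proposal is correct and follows exactly the paper's approach: the paper's argument for Theorem~\ref{main2} is precisely the discussion in Section~1.1 preceding the theorem, which invokes Bernstein--Wang \cite{BW2} for a connected self-expander and Ding \cite{D} for a disconnected one asymptotic to $\mathcal{C}(\Sigma_g)$, then concludes fattening from their coexistence. Your final step---deducing non-empty interior of the level set flow from two distinct expanding evolutions via the inclusion principle---is stated somewhat informally, but it matches what the paper leaves implicit.
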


The non-compact examples of Kapouleas-Kleene-M\o ller and Nguyen (\cite{KKM}, \cite{Ng1}, \cite{Ng2}, \cite{Ng3}), on the other hand, blow down to a graphical cone at $t=0$,  and thus have a unique evolution beyond the singular time \cite{EH}.  Some examples in fattening in higher dimensions were considered by Angenent-Ilmanen-Velasquez \cite{AIV}.  



Recently, Lee-Zhao \cite{LZ} showed that for any asymptotically conical self-shrinker, there exists a smooth embedded closed surface that develops that singularity under mean curvature flow. Chodosh-Daniels-Holzgate-Schulze \cite{CDHS} also have shown recently that fattening of a smooth hypersurface with a conical singularity occurs if and only if the cone fattens. Combining these results with our existence result (Theorem \ref{main}) we obtain:

\begin{thm}
There exists a smooth embedded closed surface in $\mathbb{R}^3$ from which the level set flow fattens.  
\end{thm}

While we were completing this article, we learned of work of Ilmanen-White \cite{IW} that constructs (presumably) the same self-shrinkers as those purported in Theorem \ref{main} using mean curvature flow.  In the process Ilmanen-White also obtain examples of mean curvature flows beginning at closed surfaces in $\mathbb{R}^3$ that fatten. Their work uses the recent resolution of the Multiplicity One Conjecture due to Bamler-Kleiner \cite{BK}.  In relation to their work, we get the additional information that the entropy of the self-shrinkers is less than $2$ which may be of independent interest.  On the other hand, Ilmanen-White show their self-shrinkers have two ends for all genera, while we can only show this for large values of the genus.


As a consequence of Bamler-Kleiner's recent resolution of the Multiplicity One Conjecture (together with Brendle's classification of genus zero shrinkers \cite{B} and other recent developments \cite{CHH},\cite{CMC}),  flowing a family of spheres in $\mathbb{R}^3$ is a well-posed problem for which fattening does not occur.  Our work shows that in the higher genus case,  fattening is unavoidable even in the highly restricted class of initial conditions with entropy less than $2$.  

\subsection{Sketch}
Let us sketch the main ideas.  Self-shrinkers are minimal surfaces in the Gaussian metric $(\mathbb{R}^3, e^{-|x|^2/4}\delta_{ij})$ and we will use the corresponding variational construction of such minimal surfaces developed in \cite{KZ}.  Recently with G. Franz and M. Schulz the min-max theory was extended to sweepouts that are equivariant with respect to any finite group $G\subset O(3)$ \cite{FKS}.  

Considering $1$-parameter sweepouts of Gaussian space by spheres invariant under the prismatic group $\mathbb{P}_{g+1} :=\mathbb{D}_{g+1}\times\mathbb{Z}_2$ we obtain from the equivariant min-max theory the self-shrinking sphere $\mathbb{S}^2_*=\mathbb{S}^2(2)$.  After all, the sphere has equivariant index $1$, has lowest entropy shrinker above that of the plane (\cite{BW}, \cite{KZ}) and exists in an optimal $1$-parameter family $\{S_t\}_{t\in [0,1]}$ of concentric spheres centered at the origin.

In \cite{K} the author introduced the notion of ``flipping" optimal foliations of three-manifolds to produce an index $2$ minimal surface (see also \cite{HK} in the free boundary case).  In our setting, we consider the min-max problem associated to all two-parameter families of $\mathbb{P}_{g+1}$-equivariant surfaces of genus $g$ that interpolate between the optimal foliation $\{S_t\}_{t\in [0,1]}$ and the same foliation with the opposite orientation.  The two-parameter family $S_{u,v}$, roughly speaking, consists of two parallel spheres $S_u$ and $S_v$ joined by $g+1$ necks in a $\mathbb{P}_{g+1} :=\mathbb{D}_{g+1}\times\mathbb{Z}_2$ equivariant fashion.   By a Lusternick-Schnirelman argument, a min-max procedure applied to this family does not simply produce $\mathbb{S}^2_*$.   By the catenoid estimate \cite{KMN} the ``width" $\omega_2(\mathbb{P}_{g+1})$ of this family satisfies
\begin{equation}\label{mx}
\omega_2(\mathbb{P}_{g+1}) < 2\lambda(\mathbb{S}_*^2) \approx 2.94.
\end{equation}
 and it would be natural to expect one obtains a doubling of the self-shrinking sphere.  

A Jacobi field argument, however, shows that there can \emph{not} exist doublings of the sphere where necks are congregating only on the equator in the limit that $g\to\infty$.  Indeed,  if $\Sigma_g$ were such a doubling,  by subtracting the distance between the two sheets and rescaling one obtains a rotationally symmetric positive Jacobi field $J=J(\phi)$\footnote{The variable $\phi$ being the azimuthal spherical coordinate.}  on each hemisphere of $\mathbb{S}^2_*$ satisfying \begin{equation}\Delta_{\mathbb{S}^2(1)} J + 4J = 0\end{equation} and $J'(0) = 0$ (to ensure smoothness at the north pole).  On the other hand, $f(\phi) = \cos(\phi)$ (i.e. the $z$-coordinate) satisfies $\Delta_{\mathbb{S}^2(1)} f + 2f = 0$. By Sturm-Liouville comparison, $J$ must have a zero at some $\phi_0< \pi/2$.  This gives a contradiction. Such considerations of ``profile functions" for doublings were introduced by Kapouleas \cite{Kap} and used in his later work \cite{KapMc}.

In fact a large part of this paper is devoted to showing that \eqref{mx} may be improved to \begin{equation}\label{ddd}\omega_2(\mathbb{P}_{g+1})<2.\end{equation} 

The proof \eqref{ddd} relies on a careful study of the metric geometry of Gaussian space.   In particular,   there exists a sweepout with Gaussian areas below $2$ beginning at a given disk $D$ centered about the origin in $H$ and ending at $H\setminus D$,  the exterior infinite annulus, together with a set of arbitrariy small Gaussian area.  The sweepout is obtained by first expanding the disk to a half-cylinder,  and then folding the cylinder through truncated cones until it collapses to $H\setminus D$.  We call such a a sweepout an \emph{inversion} since it reverses the normal vector from upward on $D$ to downward on $H\setminus D$.  The necessary geometric estimates are contained in Propositions \ref{areaellipsoids}, \ref{areacones},  and \ref{gaussianellipsoidsreal}   

Returning to the construction of the desired two-parameter family, we first fold the inner sphere $S_u$ contained in $\{S_{u,v}\}_{(u,v)\in I^2}$ inwards and the outer sphere $S_v$ outwards toward the exterior annulus in $H$ as described above so that the surface resembles parts of two planes parallel to $H$ plus spherical caps of negligible area.   Since $H$ is unstable,  we may then use the catenoid estimate \cite{KMN} to retract the surfaces to the union of two planes (modulo vertical tubes) while maintaining the Gaussian area bound.  Since the two spheres are on separate sides of the plane $H$, we then use Smale's theorem \cite{Smale} to retract them to points in a continuous fashion.  The argument is delicate since the Gaussian areas are forced to approach $2$ in the process of the deformation.  This completes the sketch of the proof of \eqref{ddd}.

We then consider a varifold limit $\Sigma_\infty$ of $\Sigma_g$ for some subsequence of $g$ tending to infinity.  Using work of Kleene-M\o ller \cite{KM} we get that either $\Sigma_{\infty}$ is the self-shrinking cylinder,  the Angenent torus (which has equivariant index $2$ and entropy roughly $1.86$ \cite{Bkog}), an immersed self-shrinker, or $2H$.  We show that the surfaces cannot degenerate to the Angenent torus as the min-max limit is obtained after equivariant neckpinches, all of which reduce the surface to a union of spheres. Immersed surfaces are ruled out by entropy considerations. The self-shrinking cylinder on the other hand has equivariant index $1$ and nullity $0$ and thus cannot arise with multiplicity $1$ from a $2$-parameter min-max procedure.  Indeed, Marques-Neves proved such Morse index bounds in the Almgren-Pitts setting on compact manifolds \cite{MN} and the author together with Liokumovich obtained analogous results in the Simon-Smith setting \cite{KL}.   Having ruled out all other possibilities, we get that $\Sigma_\infty=2H$ and $\Sigma_{g}$ resembles a doubled plane on compact subsets for large $g$.

To show that the genus does not disappear to infinity or vanish into the origin as $g\rightarrow\infty$ we consider the Jacobi equation for the Gaussian area on the plane.   If the topology disappears in the limit, then subtracting the two sheets of the self-shrinker $\Sigma_g$ and doing a blowup argument gives us a ``profile function" satisfying the confluent hypergeometric equation studied by Kummer \cite{Ku} in 1837.   A study of positive radial solutions to this equation prohibits the topology from disappearing in the limit.  

Finally, because the self-shrinkers $\Sigma_g$ consist of components in $\{z>0\}$ and $\{z<0\}$  which are only joined up at a fixed scale where genus is congregrating, we may apply Brakke's regularity theorem to each component separately to deduce that $\Sigma_g$ has only two graphical ends for $g$ large.  In this section we also use ideas of Sun-Wang \cite{SW} on compactness of genus $g$ self-shrinkers.

The organization of this paper is as follows.  In Section 2 we introduce the equivariant min-max setting in which we will work.  In Section \ref{areaestimatessection} we estimate the Gaussian areas of the building blocks of our desired sweepout.  In Section 4 we construct the ``flipping" sweepouts with Gaussian areas less than $2$.  In Section \ref{sectionls} we prove the existence of the purported self-shrinkers $\Sigma_g$.  In Section \ref{genussection} we show that $\Sigma_g$ has genus $g$.  In Section \ref{limitsection} we show that $\Sigma_g$ converges to twice a plane as $g\to\infty$.  In Section \ref{endssection} we show that $\Sigma_g$ has two graphical ends when $g$ is large.   Section \ref{appendix} is an appendix concerned with the self-shrinker Jacobi equation on the plane.
\\
\\
\noindent
\emph{Acknowledgements:}  I am grateful to Prof. Jacob Bernstein for his interest,  encouragement, and many helpful conversations and insights that greatly improved this work.

\section{$\mathbb{P}_{g+1}$-equivariant min-max}
In this section we set up the relevant equivariant min-max framework.
\subsection{Prismatic symmetry} Let us describe the prismatic symmetry group $\mathbb{P}_{g+1}$ in more detail.   If $\mathcal{P}$ is a plane in $\mathbb{R}^3$,  let us denote by $\tau_\mathcal{P}\in O(3)$ the reflection in the plane $\mathcal{P}$.  Recall $H=\{z=0\}$.

Let $\{L_1,..,L_{g+1}\}$ denote $(g+1)$ equally spaced lines contained in $H$ passing through the origin where $L_1$ coincides with the $x$-axis.  In other words for each $k=1,...,g+1$, we denote the rays (in cylindrical coordinates $(r,\theta,z)$ on $\mathbb{R}^3$)
\begin{equation}
R_k =\{(r,\theta,0)\in\mathbb{R}^3 \;|\; \theta = \frac{\pi(k-1)}{g+1} \}
\end{equation}
\noindent
Similarly for each $k=g+2,...,2g+2$ denote the rays
\begin{equation}
R_k =\{(r,\theta,0)\in\mathbb{R}^3 \;|\; \theta = \frac{\pi(k-g-2)}{g+1}+\pi \}
\end{equation}
\noindent
Then for each $k=1,...,g+1$, we have the line $L_k= R_k\cup R_{k+g}$.

The  dihedral group $\mathbb{D}_{g+1}\subset SO(3)$ is generated by a $2\pi/(g+1)$-rotation about the $z$-axis together with $\pi$-rotations about the lines $\{L_i\}_{i=1}^{g+1}$.  Let us denote by $R_i:\mathbb{R}^3\to\mathbb{R}^3$ the rotation about $L_i$.   The group $\mathbb{D}_{g+1}$ has order $2(g+1)$.    

The prismatic group $\mathbb{P}_{g+1}$ is $\mathbb{D}_{g+1}\times\mathbb{Z}_2$, where the $\mathbb{Z}_2$ factor is generated by the reflection \begin{equation}\tau_{H}(x,y,z)= (x,y,-z).\end{equation}

For each $i=1,...,g+1$,  let $P_i$ denote the plane containing the $z$-axis together with the line in $L_i$.  

The group $\mathbb{P}_{g+1}$ contains the $g+1$ vertical reflections $\{\tau_{P_i}\}_{i=1}^{g+1}$ (with $\tau_{P_i}$ obtained as $\tau_H\circ R_i)$ as well as the horizontal reflection $\tau_{H}$. 

\subsection{Equivariant sweepouts}
Let $G\subset O(3)$ be a finite subgroup.   
Set $I^n = [0, 1]^n \in \mathbb{R}^n$.    Let $\{\Sigma_t\}_{t\in I^n}$ be
a family of closed subsets of $\mathbb{R}^3$ and $B\subset\partial I^n$.    We call the family $\{\Sigma_t\}_{t\in I^n}$ a \emph{ genus g $G$-sweepout} if

\begin{enumerate}
\item $g(\Sigma_t) = \Sigma_t$ (setwise) for all $g\in G$ and $t\in I^n$.  
\item  The action of $G$ preserves the orientation of $\Sigma_t$ for $t\in I^n$.
\item $F(\Sigma_t)$ is a continuous function of $t\in I^n$
\item  $\Sigma_t$ converges to $\Sigma_{t_0}$ in the Hausdorff topology as $t\rightarrow t_0$.  
\item For $t_0 \in I^n\setminus B$, $\Sigma_{t_0}$ is a smooth closed surface of genus g and $\Sigma_t$ varies smoothly for $t$ near $t_0$.
\item For $t\in B$,  the set $\Sigma_t$ consists of the union of a 1-complex (possibly empty) together with a smooth surface (possibly empty).
\end{enumerate}
\begin{rmk}
By item (2),  a $G$-invariant sweepout is orthogonal to (or disjoint from) any plane of reflective symmetry.
\end{rmk}

Beginning with a genus g sweepout $\{\Sigma_t\}_{t\in I^n}$ we need to construct comparison sweepouts which agree with $\{\Sigma_t\}_{t\in I^n}$ on $\partial I^n$.   For any sweepout, we may generate new swepouts as follows.  For any map  $\Psi\in C^\infty (I^n\times \mathbb{R}^3, \mathbb{R}^3)$ such that for all $t\in I^n$ we have $\Psi(t,.)\in \mbox{Diff}_0(\mathbb{R}^3)$ and $\Psi(t,.) = id$ if $t\in \partial I^n$.  If $G\neq \{e\}$ we further demand the family of diffeomorphisms be $G$-equivariant. If $\Pi$ is a collection of sweepouts, we say it is \emph{saturated} if for any any sweepout $\{\Lambda_t\}_{t\in I^n}\in\Pi$ we have also $\Psi(t,\Lambda_t)\in \Pi$ as long as $\Psi$ has one of the two additional properties:
\begin{enumerate}
\item  $\Psi(t,.):\mathbb{R}^3\to\mathbb{R}^3$ has compact support for all $t\in I^n$.
\item $\Psi(t,.)$ is the time $1$ flow generated by smooth $n$-parameter vector fields $X_t:\mathbb{R}^3\to\mathbb{R}^3$ with $\sup_{t\in I^n}||X_t|_{C^1}\leq C$.
\end{enumerate}

 Given a sweepout $\{\Sigma_t\}_{t\in I^n}$, denote by $\Pi := \Pi_{\Sigma_t}$ the smallest saturated collection of sweepouts containing $\{\Sigma_t\}_{t\in I^n}$  We define the \emph{width} of $\Pi$ to be

\begin{equation}
W(\Pi,G) = \inf_{\Lambda_t\in \Pi} \sup_{t\in I^n} F(\Lambda_t).
\end{equation}

A \emph{minimizing sequence} is a sequence of sweepouts $\{\Sigma^i_t\}\in\Pi$ such that
\begin{equation}
\lim_{i\rightarrow\infty} \sup_{t\in I^n} F(\Sigma_t^i) = W(\Pi,G).
\end{equation}

Finally, a \emph{min-max sequence} is a sequence of slices $\Sigma^i_{t_i}$,  $t_i\in I^n$ taken from a minimizing
sequence so that \begin{equation}F(\Sigma_{t_i}^i)\rightarrow W(\Pi, G). \end{equation}  

The main point of the Min-Max Theory of Almgren-Pitts (\cite{Al}, \cite{Pitts}) as refined by Simon-Smith (\cite{SS} \cite{DP}) is that if the width is greater than the maximum of the areas of the boundary surfaces, then some min-max sequence converges to a minimal surface in $M$.  Crucially, the topology of the limit is controlled by $g$.  

The equivariant version was obtained in \cite{K} (cf. \cite{PR}) assuming the elements of $G$ are orientation-preserving, and recently with G. Franz and M. Schulz \cite{FKS} this assumption was removed.  The extension to the Gaussian metric $M=(\mathbb{R}^3,e^{-|x|^2/4}\delta_{ij})$ was obtained together with X. Zhou \cite{KZ} (see also \cite{SWZ}) using a special flow to prevent min-max limits from escaping to infinity.   

\begin{thm} [Multi-parameter Min-max Theorem]\label{highparamminmax}
 Given a (genus g) $G$-sweepout $\{\Sigma_t\}_{t\in I^n}$, if $\Pi$ denotes its saturation then the following holds.  If
\begin{equation}\label{isbigger}
2>W(\Pi,G)> \sup_{t\in \partial I^n} F(\Sigma_t),
\end{equation}
then there exists a min-max sequence  $\Sigma_i := \Sigma^i_{t_i}$ such that 
\begin{equation}
\Sigma_i \rightarrow \Gamma\mbox{ as varifolds, }
\end{equation} 
where $\Gamma$ is a smooth embedded $G$-equivariant self-shrinker.  Moreover,
\begin{equation}
\lambda(\Sigma) = W(\Pi, G) = F(\Sigma). \end{equation}
\noindent
We also have
\begin{enumerate} 
\item 
$genus(\Gamma)\leq g$.
\item The self-shrinker $\Gamma$ is orthogonal to any plane and axis of symmetry of $G$ that it intersects.
\end{enumerate}
\end{thm}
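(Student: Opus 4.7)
\emph{Proof plan.} The theorem synthesizes the equivariant Simon-Smith min-max theory developed in \cite{FKS} with the Gaussian-space adaptation of \cite{KZ}, so my plan is to follow the two standard steps of min-max inside the saturation $\Pi$ and then verify that the hypothesis $W(\Pi,G)<2$ supplies the compactness missing in the non-compact target. First I would run the $F$-gradient-type pull-tight by $G$-equivariant compactly supported ambient isotopies, producing a minimizing sequence $\{\Sigma^i_t\}\in\Pi$ whose slices with $F$-value near $W$ are almost $F$-stationary as varifolds in the Gaussian metric. The Almgren-Pitts combinatorial argument, combined with the equivariant Simon-Smith smoothing from \cite{FKS}, then extracts a min-max sequence $\Sigma_i=\Sigma^i_{t_i}$ that is equivariantly almost minimizing in small annuli disjoint from the fixed-point set of $G$.

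Second, I would confine the sequence to a large ball. Following \cite{KZ}, the radial drift field $-x/2$, suitably truncated in a $G$-equivariant way, generates an admissible isotopy in $\Pi$ that strictly decreases $F$ on mass drifting off to infinity, so after a diagonal argument I may assume each $\Sigma_i$ lies in a fixed compact set. The strict bound $W<2$ is precisely what rules out loss of mass at infinity: by \cite{BW} every self-shrinker has $F\geq 1$, so a split limit $\Gamma=\Gamma_1\sqcup\Gamma_2$ with $\Gamma_2$ escaping would force $F(\Gamma_1)\leq W-1<1$, contradicting the entropy lower bound on $\Gamma_1$.

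Passing to a varifold-convergent subsequence produces an $F$-stationary, hence self-shrinking, limit $\Gamma$. Smoothness and embeddedness away from the fixed-point set follow from almost-minimizing regularity in the Pitts-Simon-Smith framework, and the equivariant extension of \cite{FKS} handles regularity along the fixed axes and planes; multiplicity one follows once more from $W<2$, since a multiplicity $m\geq 2$ limit would give $mF(\Gamma)=W<2$ with $F(\Gamma)\geq 1$, which is impossible. Thus $\lambda(\Gamma)=F(\Gamma)=W(\Pi,G)$. The genus bound $\mathrm{genus}(\Gamma)\leq g$ is the equivariant neck-pinching control of \cite{K, FKS}, and orthogonality of $\Gamma$ to any symmetry plane or axis of $G$ that it meets is immediate from smoothness and invariance, since reflecting $\Gamma$ across such a plane produces a smooth embedded self-shrinker that must coincide with $\Gamma$. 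The only step that goes beyond off-the-shelf citations is the confinement argument, which is the heart of the Gaussian adaptation and the reason the strict inequality $W<2$ appears as a hypothesis.
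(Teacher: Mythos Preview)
The paper does not prove Theorem~\ref{highparamminmax}; it is stated as a synthesis of the equivariant Simon--Smith theory of \cite{K,FKS} with the Gaussian-space adaptation of \cite{KZ}, and the sentence immediately preceding the statement is the entirety of the paper's justification. Your outline follows that same synthesis and is broadly correct, but two of your explanations are imprecise in ways worth flagging.

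First, you misattribute the role of the hypothesis $W<2$. In your sketch you say it is ``precisely what rules out loss of mass at infinity,'' and you argue via a split-limit entropy count. That is not how the confinement works in \cite{KZ}: the radial drift flow you mention is what prevents escape, and it does so regardless of whether $W<2$. The paper is explicit that the bound $W<2$ is imposed ``because it rules out higher multiplicity and simplifies statements about the genus control,'' which is your later use of it (the argument $mF(\Gamma)=W<2$ with $F(\Gamma)\geq 1$ forcing $m=1$). Your split-limit argument for confinement is not sound as written: mass escaping to infinity in a min-max sequence need not limit to a self-shrinker, so the Bernstein--Wang lower bound does not apply to it.

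Second, your orthogonality argument is circular. Saying ``reflecting $\Gamma$ across such a plane produces a smooth embedded self-shrinker that must coincide with $\Gamma$'' is just restating $G$-invariance; it yields nothing about the angle of intersection. The actual argument is local and linear: if $\tau_P(\Gamma)=\Gamma$ and $p\in\Gamma\cap P$, then $d\tau_P$ preserves $T_p\Gamma$, and the only $2$-planes through $p$ invariant under reflection in $P$ are $P$ itself and planes orthogonal to $P$. Smoothness plus embeddedness then excludes the tangential case unless $\Gamma$ locally coincides with $P$.
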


We restrict to sweepouts with entropy less than $2$ because it rules out higher multiplicity and simplifies statements about the genus control.  Given the recent work of Wang-Zhou \cite{WZ} one expects multiplicity does not occur in this setting.

As an important example of Theorem \ref{highparamminmax},  for each $g\geq 1$, we have the following (genus $0$) $\mathbb{P}_{g+1}$-sweepout $\{S_t\}_{t\in [0,1]}$ of Gaussian space:
\begin{equation}\label{spheres}
S_t  = \{(x,y,z)\;|\; x^2+y^2+z^2 = \tan(\frac{t\pi}{2})\}.
\end{equation}

If we apply Theorem \ref{highparamminmax} to the saturation of the family \eqref{spheres} the self-shrinker we obtain is $\mathbb{S}^2_*$.  Indeed, by item (1) above the genus must be zero, and by Brendle's classification \cite{B} we obtain only the sphere, cylinder or plane as possibilities.  The latter is excluded by item (2) and the sphere has smaller Gaussian area.  

\subsection{$\mathbb{P}_\infty$-invariant self-shrinkers}

We will need the following classification, due to Kleene-M\o ller (Theorem 2 in \cite{KM}):

\begin{prop}[Classification of $\mathbb{P}_\infty$-invariant shrinkers]\label{classification}
A smooth $\mathbb{P}_\infty$-invariant embedded self-shrinker is one of the following:
\begin{enumerate}[(i)]
\item $\mathbb{S}_*^2$
\item $\mathbb{S}_*^1\times\mathbb{R}$ (about the $z$-axis)
\item  Angenent torus 
\item $H$ (the $xy$-plane).
\end{enumerate}
\end{prop}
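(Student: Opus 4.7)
The plan is to classify $\mathbb{P}_\infty$-invariant shrinkers by reducing to an ODE analysis of the profile curve. The $O(2)$-factor in $\mathbb{P}_\infty$ implies that each rotational orbit lies in $\Sigma$; smoothness and embeddedness then force one of two alternatives. Either $\Sigma$ contains an open portion of the $z$-axis, in which case local $O(2)$-equivariance forces it to agree, in a neighborhood of each such axis point, with a horizontal plane $\{z = c\}$; connectedness globalizes this, and substituting into $H = \langle x, \mathbf{n}\rangle/2$ forces $c = 0$, giving case (iv). Or $\Sigma$ is a surface of revolution whose profile curve $\gamma$ lies in $\{r \geq 0\}$ and meets $\{r=0\}$, if at all, transversally.

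In the second alternative, parameterize $\gamma(s) = (r(s), z(s))$ by arclength and let $\theta(s)$ denote the angle of the tangent with the $r$-axis, so $(r',z') = (\cos\theta,\sin\theta)$. A routine computation of the mean curvature and support function converts the self-shrinker equation into an autonomous ODE system on $(r,z,\theta)$-space. The horizontal reflection $\tau_H$ forces $\gamma$ to be symmetric in the $r$-axis and to cross it orthogonally, so the classification reduces to a one-parameter shooting problem in $r_0 = r(0) > 0$ with $\theta(0) = \pi/2$.

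The technical heart of the proof is then a phase-plane analysis of this shooting problem, following Kleene--M\o ller and building on Angenent's original construction. The equilibrium $r \equiv \sqrt{2}$, $\theta \equiv \pi/2$ yields the cylinder (ii); the initial datum $r_0 = 2$ traces out the great circle of $\mathbb{S}^2_*$, giving (i); and a unique intermediate value $r_0 = r_A \in (\sqrt{2}, 2)$ produces a periodic orbit that closes up after two crossings of $\{z = 0\}$, giving the Angenent torus (iii). For every other value of $r_0$ one must show the profile curve either spirals toward the cylinder, self-intersects, hits $\{r = 0\}$ non-orthogonally, or escapes to infinity in a non-embedded or non-$\tau_H$-symmetric manner.

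The main obstacle is ruling out all such \emph{other} embedded cases. Two issues must be handled separately: (a) rotationally symmetric non-compact embedded shrinkers beyond the plane and cylinder, and (b) compact embedded shrinking tori other than Angenent's. For (a) one carries out an asymptotic analysis of the ODE at infinity, exploiting the Lyapunov-like role of $|x|^2/4$ along the flow, to show each end is asymptotically cylindrical or conical; rotational symmetry together with embeddedness and smoothness across the axis then leaves only the standard cylinder. For (b) one uses a monotonicity argument in the shooting parameter $r_0$ combined with control on the winding of $\theta$ to isolate the unique Angenent value. The phase-plane bookkeeping is lengthy, so I would import the qualitative ODE statements from Kleene--M\o ller rather than reproduce them here.
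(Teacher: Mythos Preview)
The paper does not give a proof of this proposition at all: it is stated as a direct citation of Theorem~2 in Kleene--M\o ller \cite{KM} and used as a black box. Your proposal, by contrast, sketches the actual ODE/phase-plane argument underlying that theorem. The outline you give is broadly the Kleene--M\o ller strategy and is correct in spirit, so in that sense you are doing strictly more than the paper requires.

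Two remarks on the details. First, the $\tau_H$ symmetry you invoke to normalize the shooting problem is superfluous: Kleene--M\o ller's classification already covers \emph{all} complete embedded self-shrinkers of revolution, not merely those with an extra horizontal reflection, so once you have extracted $SO(2)$-invariance from $\mathbb{P}_\infty$ you can quote their result immediately. Second, the step you label (b) --- isolating a \emph{unique} Angenent value $r_A$ by a monotonicity argument in the shooting parameter --- is not something Kleene--M\o ller establish, and uniqueness of the embedded self-shrinking torus of revolution is a genuinely delicate (and, as far as I know, open or only recently resolved) question. The paper's phrasing ``Angenent torus'' in item~(iii) should be read as shorthand for any embedded self-shrinking torus of revolution; this imprecision is harmless for the paper's applications, since every such torus is excluded later by the genus argument (Proposition~\ref{surgeries}). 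If you intend your sketch to stand on its own, you should either weaken item~(iii) to ``an embedded self-shrinking torus of revolution'' or supply a separate reference for uniqueness.
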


\subsection{Equivariant index of self-shrinkers}
Let $L_{0,1}$ denote the stability operator for the $F_{0,1}$ functional defined on any self-shrinker $\Sigma$.   Let us call $\phi\in C^\infty(\Sigma)$,  a \emph{$\mathbb{P}_{g+1}$-invariant eigenfunction} if
\begin{enumerate}
\item $L_{0,1}\phi = \lambda\phi \mbox{ for some } \lambda\in\mathbb{R}$
\item $\phi(hx) = \phi(x)\mbox{ for } h\in\mathbb{P}_{g+1}\mbox{ and }x\in\Sigma$.
\end{enumerate}

Let $\mbox{ind}_{\mathbb{P}_{g+1}}(\Sigma)$ denote the maximal dimension of a space of eigenfunctions of $L_{0,1}$ with negative eigenvalue (counted with multiplicities),  and let $\mbox{null}_{\mathbb{P}_{g+1}}(\Sigma)$ denote the dimension of the $\mathbb{P}_{g+1}$-invariant kernel of $L_{0,1}$.  



The positive and negative eigenspaces of $L_{0,1}$ on spheres and cylinders are easy to calculate.  We obtain
\begin{prop}[Equivariant index]\label{indexbounds}
We have the following
\begin{enumerate}
\item $\mbox{ind}_{\;\mathbb{P}_{g+1}}(\mathbb{S}^2_*) = 1$ and $\mbox{null}_{\;\mathbb{P}_{g+1}}(\mathbb{S}^2_*) = 0$,
\item $\mbox{ind}_{\;\mathbb{P}_{g+1}}(\mathbb{S}^1_*\times\mathbb{R}) = 1$ and $\mbox{null}_{\;\mathbb{P}_{g+1}}(\mathbb{S}^1_*\times\mathbb{R}) = 0$,
\end{enumerate}
\end{prop}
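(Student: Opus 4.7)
The plan is to diagonalize the $F$-stability operator on each of the two model shrinkers in a basis that respects the prismatic symmetry, and then enumerate the $\mathbb{P}_{g+1}$-invariant eigenfunctions by sign of eigenvalue. On a self-shrinker $\Sigma$ the operator takes the Colding--Minicozzi form
\begin{equation}
L_{0,1}\phi \;=\; \Delta_\Sigma\phi - \tfrac{1}{2}\langle x,\nabla_\Sigma\phi\rangle + \bigl(|A|^2+\tfrac{1}{2}\bigr)\phi.
\end{equation}

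For the sphere, the position vector is purely normal to $\mathbb{S}^2_*$, so the drift term drops out; combined with $|A|^2=\tfrac{1}{2}$ this yields $L_{0,1}=\Delta_{\mathbb{S}^2_*}+1$. The degree-$k$ spherical harmonics are eigenfunctions with eigenvalue $1-k(k+1)/4$. I would observe that at $k=0$ the constant is invariant and gives $\lambda=1>0$; at $k=1$ the eigenspace is $\mathrm{span}\{x,y,z\}$, which admits no nontrivial $\mathbb{P}_{g+1}$-invariant element since the rotation by $2\pi/(g+1)$ about the $z$-axis annihilates the $(x,y)$-subspace (as $g+1\geq 2$) and $\tau_H$ annihilates the $z$-coordinate; and for $k\geq 2$ the eigenvalue is strictly negative. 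Since $k(k+1)=4$ has no integer solution, no eigenvalue vanishes, and one reads off $\mathrm{ind}_{\mathbb{P}_{g+1}}(\mathbb{S}^2_*)=1$ with $\mathrm{null}_{\mathbb{P}_{g+1}}(\mathbb{S}^2_*)=0$.

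For the cylinder, in coordinates $(\theta,z)$ the tangential part of the position vector is $z\partial_z$ and $|A|^2=\tfrac{1}{2}$, so
\begin{equation}
L_{0,1} \;=\; \tfrac{1}{2}\partial_\theta^2+\partial_z^2-\tfrac{z}{2}\partial_z+1.
\end{equation}
I would separate variables as $\phi=\Theta(\theta)u(z)$. The $\mathbb{P}_{g+1}$-invariant $\theta$-modes are $\cos(m(g+1)\theta)$ for $m\geq 0$, and $\tau_H$-invariance forces $u$ to be even in $z$. The $z$-operator is the Ornstein--Uhlenbeck drift Laplacian, whose spectrum in the natural weighted $L^2$ consists of $\{-k/2\}_{k\geq 0}$ with Hermite polynomial eigenfunctions. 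The resulting $L_{0,1}$-eigenvalues are $\lambda_{m,k}=1-m^2(g+1)^2/2-k/2$ with $k$ even. Among bounded invariant eigenfunctions, $(m,k)=(0,0)$ gives $\lambda=1>0$ (the dilation mode), while any $m\geq 1$ gives $\lambda\leq 1-(g+1)^2/2\leq -1<0$ since $g+1\geq 2$. Thus $\mathrm{ind}_{\mathbb{P}_{g+1}}(\mathbb{S}^1_*\times\mathbb{R})=1$ and, within this class, no invariant kernel element exists.

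The delicate point is the cylinder nullity: the full Hermite enumeration also includes $(m,k)=(0,2)$, giving the polynomial eigenfunction $u=z^2-2$ with $\lambda=0$, the classical ``bad'' Jacobi field on the self-shrinking cylinder. The main obstacle is to confirm that the convention for $\mathrm{null}_{\mathbb{P}_{g+1}}$ used in the later 2-parameter min-max index-plus-nullity bound (following \cite{KL}) excludes this polynomial-growth mode, so that the count indeed yields $\mathrm{null}_{\mathbb{P}_{g+1}}(\mathbb{S}^1_*\times\mathbb{R})=0$. Once this convention is pinned down, the spectral enumeration above finishes the proof.
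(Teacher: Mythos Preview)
Your approach differs from the paper's: you carry out an explicit spectral decomposition (spherical harmonics for $\mathbb{S}^2_*$, Fourier--Hermite for $\mathbb{S}^1_*\times\mathbb{R}$), whereas the paper simply invokes Colding--Minicozzi's geometric identification of the unstable and neutral eigenfunctions (Lemma~5.5 of \cite{CM2}): the unstable directions are exactly translations plus homothety, and the kernel consists of rotations. The paper then just checks which of these geometric Jacobi fields survive the $\mathbb{P}_{g+1}$-symmetry, finding that only the homothety does in each case. Your direct computation gives the same index count and has the advantage of being self-contained, but it forces you to confront the full spectrum rather than only the finitely many geometrically meaningful modes. One minor point: your sign convention (counting $\lambda>0$ for the index) is opposite to the paper's stated definition, though substantively you are computing the same thing.

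The genuine gap is exactly the one you flag: your own enumeration produces the $\mathbb{P}_{g+1}$-invariant kernel element $z^2-2$ on the cylinder, which lies in Gaussian-weighted $L^2$, and you do not resolve why it should be excluded. The paper avoids this entirely by citing the Colding--Minicozzi characterization, under which the zero eigenspace is identified with rotations and nothing else; you would need to consult that lemma (and the function-space convention implicit there and in the index bound of \cite{KL}) to close your argument. Note, however, that for the only place the proposition is actually used---ruling out the cylinder as a two-parameter min-max limit in Theorem~\ref{existencefinal}---the paper invokes only the equivariant \emph{index} being $1$, which your computation does establish; the nullity assertion is not used downstream.
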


\begin{proof}
The negative eigenfunctions for the stability operator for the $F_{0,1}$ functional on $\mathbb{S}^2_*$,  $\mathbb{S}^1_*\times\mathbb{R}$ correspond to translations and homothety (Lemma 5.5 in \cite{CM2}).  The eigenfunctions with zero eigenvalue (nullity) correspond to rotations.  The sphere is preserved by all rotations and no translation is $\mathbb{P}_{g+1}$ invariant, leaving only the homothety as negative eigenfunction and no nullity.  Similarly no translation or rotation of the cylinder is $\mathbb{P}_{g+1}$ invariant, leaving only the homothety as $\mathbb{P}_{g+1}$-invariant negative eigenfunction.\end{proof}



\section{Gaussian area estimates}\label{areaestimatessection}
In this section we derive a number of estimates for the Gaussian areas of standard surfaces that will comprise the building blocks of our family.   All surfaces we consider are $\mathbb{P}_\infty$-invariant. 

For $0\leq R_1\leq R_2$ let us denote the doubled annulus:
\begin{equation}
D(R_1,R_2, h):= \{(x,y,z)\in\mathbb{R}^3\;|\; R^2_1\leq x^2+y^2\leq R_2^2 \mbox{ and } z=\pm h\}.
\end{equation}
\noindent
The Gaussian area can be computed\footnote{With the convention that $|D(R_1,R_2,0)|$ counts the area twice.} as
\begin{equation}\label{gaussianareadisk}
|D(R_1,R_2, h)| = 2e^{-h^2/4}(e^{-R_1^2/4}-e^{-R_2^2/4}). 
\end{equation}
\noindent
Let us denote the cylinder:
\begin{equation}
Cyl(r,h) = \{(x,y,z)\in\mathbb{R}^3\;|\; x^2+y^2= r^2 \mbox{ and } |z|\leq h\}, 
\end{equation}
\noindent
with Gaussian area
\begin{equation}
Cyl(R,H) = Re^{-\frac{1}{4}R^2}\int_0^H e^{-\frac{1}{4}z^2}dz, 
\end{equation}
so that 
\begin{equation}\label{cyl}
Cyl(R,\infty) =\sqrt{\pi}Re^{-\frac{1}{4}R^2}, 
\end{equation}
and
\begin{equation}\label{allcyl}
Cyl(R,h)\leq hRe^{-R^2/4} \mbox{ for all } h \mbox{ and } R.
\end{equation}
The sphere
\begin{equation}
S(r) = \{(x,y,z)\in\mathbb{R}^3\;|\; x^2+y^2+z^2= r^2\}, 
\end{equation}
satisfies

\begin{equation}\label{sphere}
|S(R)| = R^2 e^{-\frac{1}{4}R^2}.
\end{equation}

The following lemma is useful. 
\begin{lemma}\label{easy}
Fix $h>0$ and $\Sigma$ a submanifold (possibly with boundary) contained in $\{z\geq 0\}$.   Consider the translated surface $\Sigma+h\vec{k}$, where $\vec{k}= (0,0,1)$.  Then
\begin{equation}
F(\Sigma+h\vec{k}) \leq e^{-h^2/4}F(\Sigma).  
\end{equation}
\end{lemma}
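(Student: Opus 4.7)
The plan is a direct computation with the Gaussian weight. I would parametrize $\Sigma + h\vec{k}$ by pushforward of the identity from $\Sigma$: for $y \in \Sigma$ write $x = y + h\vec{k}$, which is an isometry of the ambient Euclidean metric, so the induced area measure $d\mu_x$ pulls back to $d\mu_y$. Consequently
\begin{equation}
F(\Sigma + h\vec{k}) = \frac{1}{4\pi}\int_{\Sigma} e^{-|y + h\vec{k}|^2/4}\, d\mu_y.
\end{equation}

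Next I would expand the exponent: writing $y_3 = \langle y, \vec{k}\rangle$,
\begin{equation}
|y + h\vec{k}|^2 = |y|^2 + 2h y_3 + h^2,
\end{equation}
so that
\begin{equation}
e^{-|y + h\vec{k}|^2/4} = e^{-h^2/4}\, e^{-h y_3/2}\, e^{-|y|^2/4}.
\end{equation}

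The hypothesis $\Sigma \subset \{z \geq 0\}$ gives $y_3 \geq 0$, and together with $h > 0$ this yields $e^{-h y_3/2} \leq 1$ pointwise on $\Sigma$. Substituting this pointwise bound into the integral produces
\begin{equation}
F(\Sigma + h\vec{k}) \leq e^{-h^2/4}\cdot \frac{1}{4\pi}\int_{\Sigma} e^{-|y|^2/4}\, d\mu_y = e^{-h^2/4}\, F(\Sigma),
\end{equation}
which is the stated inequality. There is no substantive obstacle; the only thing to be careful about is that the translation acts as a rigid motion so no Jacobian factor appears, and that the half-space assumption is used exactly to make $e^{-hy_3/2} \leq 1$.
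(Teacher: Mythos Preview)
Your proof is correct and essentially identical to the paper's own argument: both expand $|y+h\vec{k}|^2 = |y|^2 + 2hy_3 + h^2$, use that translation is an isometry to pull back the area measure, and invoke $y_3 \geq 0$, $h>0$ to bound $e^{-hy_3/2}\leq 1$.
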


\begin{proof}
Indeed, (writing $|\rho|^2=x^2+y^2+z^2$) we get
\begin{equation}
F(\Sigma+h\vec{k}) = \int_{\Sigma+h} e^{-|\rho|^2/4}d\mu = \int_\Sigma e^{\frac{=|\rho|^2-2hz- h^2}{4}}d\mu \leq e^{-h^2/4}\int_\Sigma e^{-|\rho|^2/4}d\mu,
\end{equation}
where we use that $z\geq 0$ on the support of $\Sigma$ and $h$ is positive in the inequality. 
\end{proof} 

\subsubsection{Gaussian area of spherically-capped cylinders}
Let $S(R,h)$ denote the translated spherical caps:
\begin{equation} 
S(R,h) = \{(r,\theta,z)\in\mathbb{R}^3\;|\; z=\pm\sqrt{R^2-r^2}\pm h\}.
\end{equation}
In the following,  we estimate the Gaussian surface area of ellipsoid-like surfaces obtained from adding in a cylinder to $S(R,h)$ to obtain a closed surface.

\begin{prop}[Gaussian area of spherically-capped cylinders]\label{areaellipsoids}
For $\delta_1=.133$ there holds for all $R>0$ and $h>0$ 
\begin{equation}\label{pointy}
|S(R,h)|+|Cyl(R,h)|\leq 1.867 = 2 - \delta_1.
\end{equation}
\end{prop}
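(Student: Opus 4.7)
My plan is to apply Lemma \ref{easy} to the spherical caps and combine with the exact formula for the cylinder, then reduce the resulting two-variable maximization to a single-variable problem by optimizing in $h$ for fixed $R$.

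For the caps, the surface $S(R,h)$ is the union of an upper hemisphere of $S(R)$ translated by $h\vec{k}$ and a lower hemisphere translated by $-h\vec{k}$. Each hemisphere sits in a closed half-space, so Lemma \ref{easy} (applied separately and, for the lower cap, after reflecting in $H$) together with \eqref{sphere} gives
\[ |S(R,h)| \;\leq\; e^{-h^2/4}|S(R)| \;=\; e^{-h^2/4} R^2 e^{-R^2/4}. \]
The cylinder formula before \eqref{cyl} is already exact: $|Cyl(R,h)| = R e^{-R^2/4}\int_0^h e^{-z^2/4}\,dz$. Adding the two estimates,
\[ |S(R,h)| + |Cyl(R,h)| \;\leq\; R e^{-R^2/4}\,\Psi(R,h), \qquad \Psi(R,h) := R e^{-h^2/4} + \int_0^h e^{-z^2/4}\,dz. \]

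Next, for fixed $R>0$ I maximize $\Psi(R,h)$ in $h$. Differentiating, $\partial_h\Psi = e^{-h^2/4}(1 - Rh/2)$, which vanishes uniquely at $h^{\ast}(R) = 2/R$. Substituting yields $\Psi(R,h^{\ast}) = R e^{-1/R^2} + \sqrt{\pi}\,\mathrm{erf}(1/R)$, so the problem reduces to establishing
\[ B(R) := R e^{-R^2/4}\bigl[R e^{-1/R^2} + \sqrt{\pi}\,\mathrm{erf}(1/R)\bigr] \;\leq\; 2 - \delta_1 \quad \text{for all } R>0. \]

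For the final step, observe that $B(R)\to 0$ both as $R\to 0$ and as $R\to\infty$ because of the factor $R e^{-R^2/4}$. For $R$ outside a compact window such as $[1/2,3]$, crude estimates like $\mathrm{erf}(1/R)\leq 1$ and $Re^{-R^2/4} \leq \sqrt{2/e}$ already give $B(R)$ comfortably below $1.867$. On the compact intermediate window one verifies by a direct one-variable calculus check that $B$ is smooth and unimodal, with a unique interior maximum near $R\approx 1.75$ where $B(R)\approx 1.866$, which is (just barely) less than $2-\delta_1 = 1.867$.

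The main obstacle is the sharpness of the inequality. The coarse entropy bounds $|S(R,h)|\leq 4/e\approx 1.47$ and $|Cyl(R,h)|\leq \sqrt{2\pi/e}\approx 1.52$ sum to roughly $2.99$, so one cannot bound the two contributions independently by their individual maxima. The essential point is that both estimates share the common factor $R e^{-R^2/4}$, which permits a joint optimization: the Gaussian decay $e^{-h^2/4}$ of the caps trades off against the growth of $\int_0^h e^{-z^2/4}\,dz$ in exactly the right way at $h=2/R$, keeping their sum below $2-\delta_1$. Any looser decoupling of the two pieces loses this delicate balance and fails to reach the stated constant.
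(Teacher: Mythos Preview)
Your proof is correct and follows essentially the same route as the paper: bound the caps via Lemma~\ref{easy}, add the exact cylinder formula, observe that the derivative in $h$ vanishes uniquely at $h=2/R$, and then numerically verify that the resulting one-variable function (your $B(R)$ is exactly the paper's $G(R,2/R)$) peaks near $R\approx 1.76$ at a value just under $1.867$. Your observation that $h^\ast=2/R$ is a genuine maximum of $\Psi(R,\cdot)$ is in fact slightly cleaner than the paper, which redundantly checks the endpoint values $h=0$ and $h=\infty$ separately.
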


\begin{proof}
Using \eqref{cyl} and \eqref{sphere} together with Lemma \ref{easy} we obtain
 \begin{equation}
|S(R,h)|+|Cyl(R,h)|\leq e^{-h^2/4}R^2e^{-R^2/4} + Re^{-R^2/4}\int_0^h e^{-1/4z^2}dz = G(R,h).
\end{equation}
For fixed $R$, the derivative of $G(R,h)$ is zero only at $h = 2/R$.  Thus for each $R>0$ we obtain
\begin{equation}\label{fixedR}
\sup_{h\geq 0} |S(R,h)|+|Cyl(R,h)|\leq \max{(G(R,2/R),  |S(R,0)|,  |Cyl(R,\infty)|)}
\end{equation}
where
\begin{equation}\label{form}
G(R,2/R) = R^2e^{-R^2/4-1/R^2}+Re^{-R^2/4}\int_0^{2/R} e^{-z^2/4}dz.
\end{equation}
We find numerically the maximum of the function $F(R) = G(R,2/R)$ is at most $1.867$ and achieved at $R_0\approx 1.764$.    Thus we obtain from \eqref{fixedR}:
\begin{equation}
\sup_{R,h} |S(R,h)|+|Cyl(R,h)|\leq \max{(1.867,  \lambda(\mathbb{S}_*^2), \lambda(\mathbb{S}_*^1\times\mathbb{R}))}=1.867
\end{equation}
Thus \eqref{pointy} follows with $\delta_1=2-G(2/R_0,R_0)\leq .133$.  

\end{proof}

\subsubsection{Gaussian area of cones}

For $\phi\in [0,\pi/2]$ and $0\leq R_1<R_2$,  consider the graph of a (doubled) cone in cylindrical coordinates:
\begin{equation}
C(R_1,R_2,\phi) = \{(r,\theta,z)\in\mathbb{R}^3\; |\; z=\pm \tan(\phi)(r-R_1)\mbox{ and } R_1<r<R_2\}
\end{equation}
We compute the Gaussian area by
\begin{equation}\label{cones}
|C(R_1,R_2,\phi)| = \sec(\phi)\int_{R_1}^{R_2} r e^{-\frac{1}{4}(r^2+\tan^2(\phi)(r-R_1)^2} dr.
\end{equation}
Note that setting $\phi=0$,  we recover the formula for Gaussian area of the doubled annuli:
\begin{equation}
|C(R_1,R_2,0)|=|D(R_1,R_2,0)| = 2(e^{-\frac{1}{4}R_1^2} -e^{-\frac{1}{4}R_2^2} ).
\end{equation}


Carrying out the integral in \eqref{cones} we obtain
\begin{equation}
|C(R,\infty,\phi)|= 2\cos(\phi)e^{-\frac{1}{4}R^2}+I(R,\phi), 
\end{equation}
where $I(r,\phi)\geq 0$ is given by\footnote{Recall $\mathrm{erf}(x):=\frac{2}{\sqrt{\pi}}\int_0^xe^{-y^2}dy$, normalized so that $\mathrm{erf}(\infty)=1.$}
\begin{equation}
I(R,\phi)=\sqrt{\pi}R\sin^2(\phi)(1-\mathrm{erf}(\frac{R}{2}\cos(\phi)))e^{-\frac{R^2\sin^2(\phi)}{4}}.
\end{equation}

Differentiating $I(R,\phi)$ in $\phi$ (and discarding the negative terms), we obtain
\begin{equation}
I_\phi(R,\phi)\leq\sqrt{\pi} R\sin(\phi)
\end{equation}
so that
\begin{equation}\label{derivee}
|C_\phi(R,\infty,\phi)|\leq 2e^{-\frac{R^2}{4}}\sin(\phi)(-1+\sqrt{\pi} Re^{\frac{R^2}{4}}).
\end{equation}
Taking $R\leq 0.2$ we obtain \begin{equation}\sqrt{\pi} Re^{\frac{R^2}{4}}\leq 1/2.\end{equation} Thus by \eqref{derivee} we guarantee
\begin{equation}
|C_\phi(R,\infty,\phi)|<0 \mbox { for all } \phi\in(0,\pi/2].
\end{equation}
Thus we have $|C(R,\infty,\phi)|$ is decreasing in $\phi$ for $R$ small enough.   In fact, we have the following:

\begin{prop}[Gaussian area of cones]\label{conesbound}
For $\delta_2=.02>0$ the following is true:
\begin{enumerate}
\item For all $R_1\leq R_2$ and $\phi\in [0,\pi/2]$ there holds
\begin{equation}
|C(R_1,R_2,\phi)|\leq 2. 
\end{equation}
\item For $R_1>.2$ there holds
\begin{equation}\label{delta2}
|C(R_1,\infty,\phi)|\leq 2 - \delta_2.
\end{equation}
\item
For $R_1\leq .2$,  $|C(R_1,\infty, \phi)|$ is strictly decreasing in $ \phi\in[0,\pi/2]$, whence
\begin{equation}
 |C(R_1,\infty, \phi)|\leq |D(R_1,\infty)|\mbox{ for all }  \phi\in[0,\pi/2].
\end{equation}
\end{enumerate}
\end{prop}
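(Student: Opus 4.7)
The plan is to proceed in the order (3), (1), (2), exploiting the closed-form expression
\begin{equation*}
|C(R_1,\infty,\phi)|=2\cos(\phi)e^{-R_1^2/4}+I(R_1,\phi),
\end{equation*}
where $I(R_1,\phi)=\sqrt{\pi}R_1\sin^2(\phi)(1-\mathrm{erf}(R_1\cos(\phi)/2))e^{-R_1^2\sin^2(\phi)/4}\geq 0$, derived just above. I will also repeatedly use the obvious monotonicity $|C(R_1,R_2,\phi)|\leq|C(R_1,\infty,\phi)|$ (the integrand in the defining formula is pointwise nonnegative), which reduces every case to $R_2=\infty$. Part (3) is then essentially immediate from the derivative estimate established immediately before the proposition: for $R_1\leq 0.2$ one has $|C_\phi(R_1,\infty,\phi)|<0$ on $(0,\pi/2]$, so $\phi\mapsto|C(R_1,\infty,\phi)|$ is strictly decreasing and attains its maximum at $\phi=0$, where $|C(R_1,\infty,0)|=2e^{-R_1^2/4}=|D(R_1,\infty)|$.

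The main work is in (2), which I would approach by direct maximization. First apply the Mills-ratio inequality $1-\mathrm{erf}(x)\leq e^{-x^2}$ to obtain
\begin{equation*}
|C(R_1,\infty,\phi)|\leq e^{-R_1^2/4}\bigl[2c+\sqrt{\pi}R_1(1-c^2)\bigr],\qquad c:=\cos(\phi)\in[0,1].
\end{equation*}
The bracket is a concave quadratic in $c$, maximized at $c^\ast=\min\{1,1/(\sqrt{\pi}R_1)\}$. This produces two regimes. For $R_1\leq 1/\sqrt{\pi}$ the bracket peaks at $2$, so $|C|\leq 2e^{-R_1^2/4}$; when $R_1>0.2$ this is strictly below $2e^{-0.01}$. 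For $R_1>1/\sqrt{\pi}$ the bracket peaks at $\sqrt{\pi}R_1+1/(\sqrt{\pi}R_1)$, so $|C|\leq e^{-R_1^2/4}\bigl(\sqrt{\pi}R_1+1/(\sqrt{\pi}R_1)\bigr)$, and a short one-variable calculation shows this is maximized as $R_1\downarrow 1/\sqrt{\pi}$ with value $2e^{-1/(4\pi)}\approx 1.847$. Both regimes are strictly below $2$, and I would then take $\delta_2>0$ to be any positive constant compatible with both numerical deficits. Part (1) follows by cases: for $R_1\leq 0.2$, part (3) gives $|C(R_1,\infty,\phi)|\leq 2e^{-R_1^2/4}\leq 2$; for $R_1>0.2$, part (2) gives $|C|\leq 2-\delta_2<2$.

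The main obstacle I anticipate is the tightness of the first regime in (2): the envelope $2e^{-R_1^2/4}$ approaches $2$ as $R_1\downarrow 0$, and only just squeezes below $2-\delta_2$ on $R_1>0.2$ for suitably small $\delta_2$. This near-saturation is what forces the specific numerical threshold at $R_1=0.2$ and the small explicit value of $\delta_2$; it is also why the three cases of the proposition must be organized so that the decreasing-in-$\phi$ statement (3) can be invoked for $R_1\leq 0.2$ rather than pushed through the global bound of (2).
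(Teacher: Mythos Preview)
Your proposal is correct and in fact supplies a cleaner, fully analytic argument for part~(2) where the paper simply appeals to numerics. The paper's own proof is: ``(3) was shown above; for (2), one finds numerically that the maximum of $|C(R,\infty,\phi)|$ on $[.2,\infty]\times[0,\pi/2]$ is approximately $1.98$, attained at the boundary point $(R,\phi)=(.2,0)$.'' Part~(1) is left implicit, following from (2), (3), and monotonicity in $R_2$ exactly as you do.

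Your route replaces the two-variable numerical search by the elementary bound $1-\mathrm{erf}(x)\le e^{-x^2}$ (valid for all $x\ge 0$, as one checks by differentiating $e^{-x^2}-(1-\mathrm{erf}(x))$), which collapses $I(R,\phi)$ to $\sqrt{\pi}R\sin^2(\phi)e^{-R^2/4}$ and reduces everything to a concave quadratic in $c=\cos\phi$. This buys you a rigorous deficit without numerics, and even recovers essentially the same numerical value: in the first regime the envelope $2e^{-R^2/4}$ is tight at $\phi=0$, so your bound $2e^{-0.01}\approx 1.9801$ for $R>0.2$ matches the paper's observed maximum. One small point to tighten: your ``short one-variable calculation'' in the second regime is not quite as short as advertised, since $h(R)=e^{-R^2/4}(\sqrt{\pi}R+1/(\sqrt{\pi}R))$ is not monotone on $[1/\sqrt{\pi},\infty)$---it has an interior local maximum near $R\approx 1.05$ (solve the quadratic coming from $h'=0$). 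You should note that this interior value is $\approx 1.82$, still below $h(1/\sqrt{\pi})\approx 1.847$, so the global supremum is indeed at the left endpoint as you claim.
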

\begin{proof}
We have already shown (3).  To show (2), it suffices to find the maximum of $|C(R,\infty,\phi)|$ for $(R,\phi)$ in the domain $[.2,\infty]\times[0,\pi/2]$.    One finds numerically that the maximum is achieved at approximately $1.98$ at the boundary point $(R,\phi) = (.2, 0)$.   Thus \eqref{delta2} holds with $\delta_2=.02$.\footnote{This is far from sharp, but for our purposes the existence of $\delta_2>0$ is sufficient.}
\end{proof}
\begin{rmk}
For $R$ small,  $|C(R,\infty,\phi)|$ is decreasing in $\phi$ and for $R$ large $|C(R,\infty,\phi)|$ is increasing in $\phi$.   In the intermediate region $|C(R,\infty,\phi)|$ has one critical point in $\phi$ for each $R$.  This behavior is consistent with the existence of self-shrinking conical ``trumpet ends" discovered by Kleene-M\o ller \cite{KM} that interpolate between the cylinder and (doubled) plane. 
  
\end{rmk}

In fact we will need to consider the translations of the cones.  Considering the translated cone $C(R_1,\infty,h, \phi)$ defined over $R_1<r<R_2$ by
\begin{equation}\label{liftedcones}
C(R_1,R_2, h,\phi) = \{(r,\theta,z)\; |\; z=\pm \tan(\phi)(r-R_1)\pm h\mbox{ and } R_1<r<R_2\}
\end{equation}

Using Lemma \ref{easy} we can estimate the Gaussian area
\begin{equation}\label{simple}
|C(R_1,R_2,h, \phi)|\leq e^{-\frac{h^2}{4}} |C(R_1,R_2,\phi)|.
\end{equation}

Thus from Proposition \ref{conesbound} we obtain:

\begin{prop}[Gaussian area of translated cones]\label{areacones}
For $\delta_2$ defined in Proposition \ref{conesbound},  the following are true:
\begin{enumerate}
\item For $R>.2$ and all $h$ there holds
\begin{equation}
|C(R,\infty,h,\phi)|\leq 2 - \delta_2.
\end{equation}
\item
For $R\leq .2$ and all $h$ and $\phi\in [0,\pi/2]$ there holds
\begin{equation}
|C(R,\infty,h,\phi)|\leq |D(R,\infty,h)|.
\end{equation}
\end{enumerate}
\end{prop}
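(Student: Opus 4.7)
The plan is to read off both claims directly from Proposition \ref{conesbound} via the translation estimate \eqref{simple}. That estimate, already recorded right before the statement, says
\begin{equation*}
|C(R,\infty,h,\phi)| \leq e^{-h^2/4}\,|C(R,\infty,\phi)|,
\end{equation*}
and is obtained by splitting the translated doubled cone \eqref{liftedcones} into its upper sheet (contained in $\{z\geq h\}\subset\{z\geq 0\}$) and its lower sheet (contained in $\{z\leq -h\}$), applying Lemma \ref{easy} to the upper one, and using the $z\mapsto -z$ symmetry of the Gaussian density for the lower one. Once this inequality is in hand, both parts of the proposition are a one-line corollary of the corresponding statements in Proposition \ref{conesbound}.

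For part (1), when $R>0.2$ Proposition \ref{conesbound}(2) gives $|C(R,\infty,\phi)|\leq 2-\delta_2$, and since $e^{-h^2/4}\leq 1$ for every $h$, multiplying through preserves the bound. For part (2), when $R\leq 0.2$ Proposition \ref{conesbound}(3) gives the monotonicity estimate $|C(R,\infty,\phi)|\leq |D(R,\infty,0)|$ uniformly in $\phi\in[0,\pi/2]$. Combining this with \eqref{simple} and the explicit formula \eqref{gaussianareadisk} for the doubled annulus, which yields
\begin{equation*}
|D(R,\infty,h)| = 2e^{-h^2/4}e^{-R^2/4} = e^{-h^2/4}\,|D(R,\infty,0)|,
\end{equation*}
gives the desired inequality $|C(R,\infty,h,\phi)|\leq |D(R,\infty,h)|$.

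There is no real obstacle here, since all the analytic content has already been done in Proposition \ref{conesbound} and in the derivation of \eqref{simple}. The one place where a brief comment is appropriate is the sharpness of the translation step for part (2): one wants the bound in terms of $|D(R,\infty,h)|$ rather than $|D(R,\infty,0)|$, and the fact that the Gaussian weight $e^{-h^2/4}$ factors out identically from both $|C(R,\infty,h,\phi)|$ (via Lemma \ref{easy}) and $|D(R,\infty,h)|$ (from \eqref{gaussianareadisk}) is exactly what makes the comparison pass cleanly from the unshifted to the shifted setting.
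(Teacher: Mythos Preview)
Your proof is correct and follows essentially the same approach as the paper: both parts are deduced from Proposition \ref{conesbound} via the translation estimate \eqref{simple}, with part (2) using in addition the identity $|D(R,\infty,h)|=e^{-h^2/4}|D(R,\infty,0)|$ from \eqref{gaussianareadisk} (the paper writes this as $e^{-h^2/4}|C(R,\infty,0)|=|D(R,\infty,h)|$, which is the same thing).
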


\begin{proof}
For $R\geq .2$,  we may bound (for all $h$) using Lemma \ref{easy}
\begin{equation}
|C(R,\infty,h,\phi)|\leq e^{-h^2/4}(2-\delta_2) \leq   2-\delta_2.
\end{equation}

To show (2), observe that for $R\leq .2$ by item (3) in Proposition \ref{conesbound} together with Lemma \ref{easy} we obtain
\begin{equation}
|C(R,\infty,h,\phi)|\leq e^{-h^2/4} |C(R,\infty, \phi)| \leq e^{-h^2/4}|C(R,\infty, 0)| = |D(R,\infty,h)|.
\end{equation}

\end{proof}





\subsubsection{Gaussian areas of ellipsoids}
Consider the ellipsoid with $a>b$ given by 
\begin{equation}
E(a,b)= \{(x,y,z)\in\mathbb{R}^3\; |\; (x^2+y^2)a^{-2} + z^2b^{-2} = 1\}
\end{equation}
The formula for the Gaussian area of $E(a,b)$ is 
\begin{equation}\label{ellipsoidformula}
|E(a, b)| = e^{-\frac{a^2}{4}}\int_0^1 e^{\frac{1}{4}(a^2-b^2)\tau^2}a^2\tau \sqrt{1+b^2a^{-2}\tau^{-2}(1-\tau^2)}d\tau. 
\end{equation}


Note that $E(a,0)$ is (in the sense of varifolds) the disk $D(0,a)$ with multiplicity $2$ and $E(a,a)$ is the sphere $S(a)$.  
We have the following confirmed numerically:

\begin{prop}[Gaussian areas of ellipsoids]\label{gaussianellipsoidsreal}
Setting $\delta_3=.0365>0$, for all $b\leq a\leq 4$ there holds
\begin{equation}\label{ellipsoidaway}
|E(a,b)|\leq 2-\delta_3= 1.9365.
\end{equation}
\end{prop}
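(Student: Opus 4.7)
The plan is to reduce Proposition \ref{gaussianellipsoidsreal} to a combination of boundary analysis and a compact-interior verification, exploiting the explicit formula for $|E(a,b)|$. First I would rewrite \eqref{ellipsoidformula} in the slightly simpler form
\begin{equation}
|E(a,b)| = a\, e^{-a^2/4} \int_0^1 e^{(a^2-b^2)\tau^2/4} \sqrt{a^2\tau^2 + b^2(1-\tau^2)}\, d\tau,
\end{equation}
which is jointly smooth on the compact triangle $\mathcal{D} := \{(a,b) : 0 \leq b \leq a \leq 4\}$, so the supremum of $|E|$ on $\mathcal{D}$ is actually attained.

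Next I would dispose of the three boundary components of $\mathcal{D}$. On $\{b=0\}$ the formula collapses to $|E(a,0)| = 2(1-e^{-a^2/4})$, strictly increasing in $a$ and hence bounded by $|E(4,0)| = 2 - 2e^{-4}$; a direct check gives $2e^{-4} > 0.0365 = \delta_3$, so the claim holds on this face. On $\{b=a\}$ one recovers the sphere formula \eqref{sphere}, giving $|E(a,a)| = a^2 e^{-a^2/4} \leq 4/e \approx 1.47$, safely below $2-\delta_3$. On $\{a=4\}$ one is left with the one-variable problem $b \mapsto |E(4,b)|$; I would differentiate under the integral sign and show that the negative contribution from the exponential factor $e^{-b^2\tau^2/4}$ dominates the positive contribution from $\sqrt{16\tau^2 + b^2(1-\tau^2)}$ when $a=4$, yielding monotonic decrease in $b$ and hence $|E(4,b)| \leq |E(4,0)|$.

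For the interior I would use a two-stage argument: an a priori Euclidean area bound plus a finite numerical check on the remainder. Since $E(a,b) \subset B(a)$ and the Euclidean area of an oblate spheroid with semi-axes $(a,a,b)$, $b\leq a$, is at most $4\pi a^2$ (interpolating smoothly between $2\pi a^2$ at $b=0$ and $4\pi a^2$ at $b=a$), one has $|E(a,b)| \leq a^2$, which disposes of the region $a \leq \sqrt{2-\delta_3} \approx 1.40$. On the remaining compact region $(a,b) \in [1.40, 4] \times [0,4]$ intersected with $b \leq a$, I would derive an explicit Lipschitz bound for $|E|$ by differentiating the integral formula and then verify the claim on a sufficiently fine grid, which is presumably the route taken in \cite{ACI}-style numerical confirmation alluded to in the statement.

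The main obstacle is that the target bound is nearly sharp: the corner value $|E(4,0)| = 2 - 2e^{-4} \approx 1.9634$ differs from $2 - \delta_3 = 1.9635$ by only about $10^{-4}$, so every estimate must resolve this narrow margin. In particular, the monotonicity in $b$ along $\{a=4\}$ must be shown to be strict all the way down to $b=0$, and the interior grid must be fine enough to outrun the Lipschitz error. I expect the monotonicity argument on $\{a=4\}$ to be the most delicate analytic step, since near $b=0$ the integrand in $\partial_b|E|$ develops an apparent singularity at $\tau=0$ that needs to be tamed by a change of variables of the form $\tau = bs/a$ before the sign analysis can be carried out.
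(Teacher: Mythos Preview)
The paper does not give an analytic proof of this proposition at all; the statement is simply asserted as ``confirmed numerically,'' with no further argument. Your outline is therefore considerably more structured than what the paper offers, and the treatment of the edges $\{b=0\}$ and $\{b=a\}$ is fine. However, there is a genuine error on the edge $\{a=4\}$.

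Your claim that $b\mapsto |E(4,b)|$ is monotone decreasing is false. Carrying out the very change of variables $\tau = bs/4$ that you propose, one finds that for small $b>0$
\[
|E(4,b)| - |E(4,0)| \;=\; c\,e^{-4}\,b^{2}\log\tfrac{1}{b} \;+\; O(b^{2}),\qquad c>0,
\]
so $|E(4,\cdot)|$ is strictly \emph{increasing} near $b=0$. Geometrically, the Euclidean area of the oblate spheroid with semi-axes $(a,a,b)$ exceeds that of the doubled disk by the well-known amount $2\pi b^{2}\log(2a/b)+O(b^{2})$, and this extra area accrues near the equator where the Gaussian weight is essentially the constant $e^{-a^{2}/4}$; the competing loss from the caps drifting off $H$ is only $O(b^{2})$ and cannot cancel the logarithm. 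The singularity you flag at $\tau=0$ is therefore genuine rather than apparent, and its sign is the opposite of what your sketch requires.

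Consequently the corner $(4,0)$ is \emph{not} the global maximiser: the maximum on the edge $\{a=4\}$ is attained at some $b^{*}\in(0,4)$ and lies strictly above $|E(4,0)|=2-2e^{-4}$. Since you yourself note that $2e^{-4}$ exceeds $\delta_{3}=0.0365$ by only about $10^{-4}$, this destroys the reduction to the corner, and there is no way to close the argument without an honest numerical evaluation of the interior of that edge (and of the triangle). That is precisely what the paper does, without any preceding analytic reduction.
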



We will also need to consider the following radial graphs \begin{equation}z^+_{h,a,b}: H\to \mathbb{R}^+\end{equation} for each $h\leq b\leq a$:
\begin{equation}
z^{+}_{h,a,b}(r,\theta)  = 
\begin{cases}\label{combinedgraphs}
 b\sqrt{1-a^{-2}r^2}  & \mbox{ for            } r\leq a\sqrt{1-h^2b^{-2}} ,\\
h &\mbox{          for        } r\geq a\sqrt{1-h^2b^{-2}} ,\\
\end{cases}
\end{equation}

Define $z^-_{h,a,b}(r,\theta)= -z^+_{h,a,b}(r,\theta)$.  The family is obtained by taking the maximum of the graphs determined by $z=h$ and $E(a,b)$.  Fixing $h$ and $a$, we obtain a $1$-parameter family of graphs $\{z^+_{h,a,b}\}_{b=h}^a$ interpolating from the constant function $z^+_{h,a,h}=h$ at $b=h$ to the union of a piece of sphere glued together with constant function at $b=a$.  This family effectively removes the parts of the ellipsoid $E(a,b)$ with vertical tangent line.  

Let $z_{h,a,b} =  z^+_{h,a,b}\cup z^-_{h,a,b}$.  We need the following estimate:

\begin{lemma}\label{graphs}

Suppose $a\geq 3$.  Then for all $0\leq h\leq b\leq a$ there holds
\begin{equation}\label{bestbound}
|z^+_{h,a,b}|\leq 1-\frac{h^2}{4}.
\end{equation}

\end{lemma}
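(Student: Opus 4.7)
The graph $z^+_{h,a,b}$ decomposes into an upper ellipsoidal cap $\mathcal{C}$ over the disk $D_R$ of radius $R := a\sqrt{1 - h^2 b^{-2}}$, and a flat annular region $\mathcal{A}$ at height $h$ over $H \setminus D_R$. My plan is to estimate $|\mathcal{C}|$ and $|\mathcal{A}|$ separately and combine them, splitting into two cases depending on the size of $a$.

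For the flat piece, a direct polar computation gives
\begin{equation*}
|\mathcal{A}| \;=\; \frac{1}{4\pi}\int_0^{2\pi}\!\!\int_R^\infty e^{-(r^2 + h^2)/4}\, r\, dr\, d\theta \;=\; e^{-h^2/4}\cdot e^{-R^2/4}.
\end{equation*}
For the cap, I would parameterize $E(a,b)$ by $(a\sin\phi\cos\theta, a\sin\phi\sin\theta, b\cos\phi)$ and restrict to $\phi \in [0, \arccos(h/b)]$. The induced area element is $a\sin\phi\sqrt{a^2\cos^2\phi + b^2\sin^2\phi}\, d\phi\, d\theta$ and the Gaussian exponent is $|x|^2 = a^2 - (a^2 - b^2)\cos^2\phi$. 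Substituting $\tau = \cos\phi$ yields
\begin{equation*}
|\mathcal{C}| \;=\; \frac{a}{2}\,e^{-a^2/4}\!\int_{h/b}^1 e^{(a^2-b^2)\tau^2/4}\sqrt{(a^2-b^2)\tau^2 + b^2}\, d\tau,
\end{equation*}
which matches exactly half of the ellipsoid integrand in \eqref{ellipsoidformula} restricted to $\tau \in [h/b, 1]$. In particular $|\mathcal{C}| \leq \tfrac{1}{2}|E(a,b)|$, with equality at $h = 0$.

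In the first case $3 \leq a \leq 4$, Proposition \ref{gaussianellipsoidsreal} immediately gives $|\mathcal{C}| \leq 1 - \delta_3/2$. Moreover, since $\mathcal{C} \subset \{z \geq h\}$, applying Lemma \ref{easy} to the downshift $\mathcal{C} - h\vec{k}$ yields $|\mathcal{C}| \leq e^{-h^2/4}\,|\mathcal{C} - h\vec{k}|$; I would bound the downshifted cap explicitly using its graphical form over $D_R$. Combining this $h$-dependent factor with $|\mathcal{A}| = e^{-h^2/4}e^{-R^2/4}$ and using $a \geq 3$ to control $R$ from below gives a clean polynomial inequality in $h$ which implies the stated estimate. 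In the second case $a \geq 4$, the overall factor $e^{-a^2/4} \leq e^{-4}$ in the cap formula is already tiny, so $|\mathcal{C}|$ is negligible; combined with $|\mathcal{A}|\leq e^{-h^2/4}$ and a Taylor expansion of $e^{-h^2/4}$, the bound closes easily.

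The main obstacle is the tightness near $b \to h$, where the cap $\mathcal{C}$ degenerates and $z^+_{h,a,b}$ becomes essentially the plane at height $h$ with Gaussian area $e^{-h^2/4}$. Since $1 - h^2/4$ differs from $e^{-h^2/4}$ only at order $h^4$, any proof must carefully extract that margin from the hypothesis $a \geq 3$: the point is that under this hypothesis, the exponential decay $e^{-R^2/4}$ of the outer flat piece together with the exponential factor $e^{-a^2/4}$ in $|\mathcal{C}|$ together compensate for the weakness of the polynomial bound $1 - h^2/4$ against $e^{-h^2/4}$.
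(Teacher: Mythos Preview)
Your decomposition and your formulas for $|\mathcal{A}|$ and $|\mathcal{C}|$ are correct, but your final paragraph has actually uncovered a defect in the \emph{statement}, not merely an obstacle to be overcome. At $b = h$ the threshold radius is $R = a\sqrt{1 - h^2/h^2} = 0$, so $\mathcal{C}$ is empty and $z^+_{h,a,h}$ is the entire plane $\{z = h\}$, with Gaussian area exactly $e^{-h^2/4}$. Since $e^{-h^2/4} > 1 - h^2/4$ for every $h > 0$, the inequality \eqref{bestbound} as written fails at $b = h$. Crucially, this endpoint surface does not depend on $a$ at all, so the hypothesis $a \geq 3$ cannot rescue it; your proposed compensation via the factors $e^{-R^2/4}$ and $e^{-a^2/4}$ is vacuous precisely here, because $R = 0$ and $\mathcal{C} = \emptyset$. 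The ``clean polynomial inequality'' in Case~1 is never written down, and indeed cannot exist for this target.

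The paper's own proof confirms that the intended right-hand side is $e^{-h^2/4}$: it asserts, as a numerical fact, that $|z^+_{h,a,b}|$ is decreasing in $b$ on $[h,a]$ whenever $a \geq 3$, so the maximum is attained at $b = h$, where it explicitly says ``we obtain equality''---and equality with $1 - h^2/4$ is impossible for $h > 0$. The corrected bound $e^{-h^2/4}$ is what the later application actually uses (one only needs $\leq 1 - ch^2$ for $h$ small). So the paper's argument is simply numerical monotonicity in $b$ plus endpoint evaluation, quite different from your analytic case split; if you want an analytic proof of the corrected bound, the natural route is to differentiate your expression for $|\mathcal{C}| + |\mathcal{A}|$ in $b$ and verify the sign, which is exactly the step the paper relegates to numerics.
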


\begin{proof}
For the elliptical part of $z^+_{h,a,b}$, the area may be computed by restricting the integral in \eqref{ellipsoidformula} to $\tau\in [h/b,1]$.  The formula for the part of $z^+_{h,a,b}$ which is constant is given in \eqref{gaussianareadisk}.  Numerically we find that $|z^+_{h,a,b}|$ is decreasing in $b$ as long as $a\geq 3$.  When $b=h$, we obtain equality in \eqref{bestbound}.
\end{proof}


\section{Existence $\mathbb{P}_{g+1}$-equivariant flipping sweepouts}
In this section,  we construct the non-trivial two-parameter ``flipping" sweepout that will be used to produce the self-shrinker $\Sigma_g$.  In the following, let $V$ denote the volume of Gaussian space\footnote{$V=\frac{1}{(4\pi)^{3/2}}\int_{\mathbb{R}^3} e^{-3|\rho|^2/8}dx = \frac{1}{(4\pi)^{1/2}}\int_0^\infty r^2e^{-3r^2/8}dr\approx .5445$.}.  We prove the following (recall $I=[0,1]$ and the optimal family of spheres $\{S_t\}_{t\in [0,1]}$ was defined in \eqref{spheres}):
\begin{thm}[Flipping sweepout]\label{optimalsweepout}
Fix $g\geq 1$.  There exists a (genus $g$) $\mathbb{P}_{g+1}$-sweepout $\{\Sigma_{s,t}\}_{(s,t)\in I^2}$ so that the following hold for suitably small $\eta_1\ll 1$ and $\eta_2<\frac{V}{2}$.
\begin{enumerate}
\item $\Sigma_{0,t}=S_{1-t}$ together with a set of arcs for $t\in [0,1-\eta_1]$ \label{a} 
\item $|\Sigma_{0,t}|\leq \eta_1$ for $t\in [1-\eta_1,1]$ \label{b}
\item $\Sigma_{1,t} = S_{t}$ together with a set of arcs for $t\in [\eta_1,1-\eta_1]$ \label{c}
\item $|\Sigma_{1,t}|\leq \eta_1$ for $t\in I\setminus [\eta_1,1-\eta_1]$ \label{d}
\item $|\Sigma_{s,0}|+|\Sigma_{s,1}|\leq \eta_1$ for all $s\in I$ \label{ee}
\item $\Sigma_{0,t}$ has the opposite orientation to $\Sigma_{1,1-t}$ for $t\in [\eta_1,1-\eta_1]$. \label{opposite}  
\item $\Sigma_{s,t}$ has genus $g$ for $(t,s)\notin \partial I^2$. \label{f}
\item For each $s\in[0,1]$, the $1$-sweepout $\{\Sigma_{s,t}\}_{t\in [0,1]}$ is a non-trivial sweepout in the following sense.  For some continuously varying choice of region $R_{s,t}$ bounded by $\Sigma_{s,t}$, there holds: $\mbox{vol} (R_{s,0})\leq\eta_2$ and $\mbox{vol}(R_{s,1})>V-\eta_2$ for each $s\in [0,1]$. \label{great}
\item The Gaussian area bound holds: \begin{equation} \sup_{(s,t)\in\partial I^2} F(\Sigma_{t,s})= \lambda(\mathbb{S}^2_*).\end{equation}
where the supremum is attained at a unique point on both the left $\{(0,t)\}_{t\in [0,1]}$ and right $\{{(1,t)}\}_{t\in [0,1]}$ sides of the parameter space $I^2$.
\item The Gaussian area bound holds: \begin{equation} \sup_{(s,t)\in I^2} F(\Sigma_{s,t})<2.\end{equation}
\end{enumerate}
\end{thm}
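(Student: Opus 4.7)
The plan is to construct $\{\Sigma_{s,t}\}$ explicitly from the building blocks catalogued in Section~\ref{areaestimatessection}, adapted to $\mathbb{P}_{g+1}$ symmetry by cutting and inserting $g+1$ thin tubes placed equivariantly along the rays $L_1,\ldots,L_{g+1}\subset H$. The parameter $t$ will control the overall size, so that the left boundary $s=0$ and the right boundary $s=1$ recover the round sphere sweepouts $S_{1-t}$ and $S_t$ with opposite orientations; the parameter $s$ parametrizes the shape and passes through a near-doubled-plane configuration near $s=\tfrac12$.

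The heart of the construction is an auxiliary one-parameter \emph{inversion sweepout} that carries the doubled disk $D(0,R,0)\subset H$ continuously to the complementary doubled annulus $D(R,\infty,0)\subset H$ while keeping Gaussian area strictly below $2$. Starting from the disk, one first grows the boundary circle vertically to form a spherically-capped cylinder $S(R,h)\cup Cyl(R,h)$, whose area is bounded by $2-\delta_1$ via Proposition~\ref{areaellipsoids}; next one opens the cap by letting the angle $\phi$ sweep from $\pi/2$ down to $0$ through the translated cones $C(R,\infty,h,\phi)$, whose area is bounded by $2-\delta_2$ via Proposition~\ref{areacones}; finally one lowers the resulting conical annulus back onto $H$. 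In the process the unit normal to $H$ has been flipped.

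For the core two-parameter family we apply a pair of such inversions above and below $H$, equivariantly. For interior $(s,t)$ the surface $\Sigma_{s,t}$ consists of two shells (which at $s=\tfrac12$ look like two nearly-parallel horizontal sheets at heights $\pm h(s,t)$ generated by the inversion) joined by $g+1$ thin equivariant vertical necks placed at the positions $L_i$, giving genus $g$. As $s\to 0$ the outer shell collapses through small ellipsoids (area controlled by Proposition~\ref{gaussianellipsoidsreal}), the necks close off, and what remains is the single sphere $S_{1-t}$ plus arcs (condition (1)); symmetrically at $s=1$ (condition (3)). At the corners and along the top/bottom edges of $I^2$ we invoke Smale's theorem \cite{Smale} to contract small remnants to points continuously, meeting the smallness conditions (2), (4), (5). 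The orientation reversal (6) is built in by the inversion; the genus count (7) is immediate from the two-shell plus $(g{+}1)$-neck topology; and (8), (9) hold by direct inspection of the construction on $\partial I^2$.

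The delicate point, and main obstacle, is condition (10): keeping the Gaussian area \emph{strictly} below $2$ throughout the interior of $I^2$. In the middle of the $s$-range the two sheets together have area approaching $2e^{-h^2/4}\le 2$, and the $g+1$ vertical necks joining them must cost strictly less than the naive cylinder bound would suggest, since $|\mathbb{S}^1_*\times\mathbb{R}|\approx 1.52$ is too large to add on top of the sheet contribution. The positive deficits $\delta_1,\delta_2,\delta_3$ from Section~\ref{areaestimatessection} supply a fixed gap below $2$ at each stage of the inversion, and the catenoid estimate of \cite{KMN} allows the necks to be opened and closed through configurations whose Gaussian area strictly decreases; this is precisely the ingredient that forces $\omega_2(\mathbb{P}_{g+1})$ strictly below $2$ in \eqref{ddd}. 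Choosing the parameters $R, h, \phi$ as functions of $(s,t)$ so that at each instant the active building block enjoys one of the three deficits, the bound (10) holds at every $(s,t)\in I^2$.
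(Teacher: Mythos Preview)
Your proposal identifies the correct ingredients—the inversion via capped cylinders and cones, the deficits $\delta_1,\delta_2,\delta_3$, the catenoid estimate, and Smale's theorem—and the overall strategy matches the paper's. However, your topological picture of the sweepout surfaces is not the paper's, and this matters for how the boundary conditions and the area bound are actually met.

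In the paper's construction the surface $\Sigma_{s,t}$ is \emph{not}, for most of the parameter space, two shells joined by $g{+}1$ vertical necks. Through Steps~1--4 it is a single large outer component (sphere, then capped cylinder, then truncated cone with caps) together with a tiny inner capped cylinder $F_h(h)$ of negligible area, joined by $g{+}1$ thin \emph{horizontal} tubes along the rays $R_i\subset H$; the genus comes from these horizontal tubes. Only in Step~5 is the catenoid estimate applied (in reverse) to open $g{+}1$ \emph{vertical} necks between the resulting nearly-planar sheets. The $s$-parameter encodes the radius $R\in[0.2,5]$ at which the inversion occurs, not a shape that passes through a doubled plane at $s=\tfrac12$; the near-doubled-plane configuration occurs instead at $t=5/6$ for every $s$. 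This is why the left and right boundary conditions are easy: the outer component already \emph{is} a round sphere $S(R/t)$ for small $t$, so one merely tapers away the tubes and the inner component. The ellipsoid estimates are not used as you describe (to collapse an outer shell) but to modify Step~4 on the right edge, letting the inner component grow as an ellipsoid so that the family interpolates back to the sphere foliation with area below $2$.

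There is also a genuine gap at the bottom of the square. After Step~5 the surface has area $2-Ch^2$, just below $2$, and consists of a sphere in $\{z>0\}$ and its mirror in $\{z<0\}$ joined by thin tubes. Invoking Smale's theorem directly to contract these does not control Gaussian area. The paper's Squeezing Lemma (Lemma~\ref{squeezinglemma}) supplies the missing idea: first translate each component far into its half-space, where Lemma~\ref{easy} gives exponential area decay $e^{-\tau^2/4}$, and only then apply Smale's extension; this is what guarantees the strict bound (10) persists through the fill-in.
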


\subsection{Inversions at varying radii}
In this subsection,  we begin to construct the family posited in Theorem \ref{optimalsweepout}.   For $R$ in a suitable range  we will construct a family of surfaces  $\{\Sigma^R_t\}_{t\in [0,5/6]}$.   We call $\{\Sigma^R_t\}_{t\in [0,5/6]}$ an `inversion at radius $R$' because it first consists of a spherical foliation (modulo thin tubes connecting to a negligible interior sphere) beginning at radius infinity and stopping at radius $R$ after which the sphere is ``inverted" toward the part of a plane parallel to the $xy$-plane with radial coordinate bigger than $R$ (together with large spherical caps).  This process involves interpolation with cylinders and cones as described in the introduction.  The inversion process brings the Gaussian areas extremely close to, but below, $2$.

The family of surfaces $\{\Sigma^R_t\}_{t\in [0,5/6]}$ depends on several auxiliary variables: \begin{equation}\Sigma^R_t=\Sigma^R_t(h(R),\delta(R), \Omega(R,h))\end{equation} where $h, \delta, \Omega$ will be chosen in the course of the construction.  Roughly speaking, the parameter $\delta$ will be very small and represents the thickness of a certain collection of tubes.  The parameter $h$ will represent the radius and height of the surface $F_h(h)$ (i.e.  a cylinder capped with disks).  The parameter $\Omega(R, h)$ is large and measures to what radius we fling out a sphere toward infinity in the deformation.  The sweepout is obtained by concatenating the following five isotopies.  

For ease of notation, in this section for $\Sigma\subset\mathbb{R}^3$ a piecewise smooth surface we denote $F(\Sigma)$ by $|\Sigma|$.
\\
\\
Let us set
\begin{equation}\label{ranger}
.2 \leq R \leq 5.
\end{equation}
{\it Step 0: Set-up and definitions.}
For any $h,r>0$ let 
\begin{equation}\label{defofsmall}
F_h(r) = Cyl(r,h)\cup D(0,r,h), 
\end{equation}
denote the sphere obtained by capping off a cylinder with disks.  
Assume $\epsilon\ll\min({h,r})$. Let $L(\epsilon)$ denote the boundary of the $\epsilon$-tubular neighborhood about the union of the rays $\cup_{i=1,odd}^{2g+1}R_i$ \footnote{When $g$ is odd,  the union includes half-lines in opposite directions, giving a union of lines.}.  Let $D^1(h, r, \epsilon)$ denote the union of the $g+1$ disks in $Cyl(r,h)$ bounded by the $g+1$ circles \begin{equation}L(\epsilon)\cap Cyl(r,h)\subset F_h(r).\end{equation}  Similarly let $D^2(R, \epsilon)$ denote the union of disks in $S(R)$ bounded by the $g+1$ small circles $L(\epsilon)\cap S(R)$.  Note that for $\epsilon\ll\min({h,r})$ there holds
\begin{equation}
|D^1(h, r, \epsilon)|\leq 2\pi\epsilon^2 \mbox{ and } |D^2(R, \epsilon)|\leq 2\pi\epsilon^2
\end{equation}

Let $L(R,h,r, \epsilon)$ denote the union of $g+1$ annuli comprising the connected components of $L(\epsilon)\setminus (Cyl(r,h)\cup S(R))$ with boundary $\partial D^1(h,r, \epsilon)\cup \partial D^2(R,\epsilon)$.  Since straight lines through the origin in Gaussian space to infinity have bounded length, there exists $A>0$ so that
\begin{equation}\label{length}
|L(R,h,r,\epsilon)|\leq |L(\epsilon)|\leq A\epsilon.
\end{equation}

\noindent 
{\it Step 1: Initial surfaces.}
\noindent 
For $h<.1$ let us denote
\begin{equation}\Sigma_0(R,h)=S(R)\cup F_h(h). 
\end{equation}

We define a smoothly varying family of surfaces for $t\in (0,1]$: 

\begin{equation}
\Sigma_1^R(t) = \Sigma_0(\frac{R}{t},ht)\cup(L(\frac{R}{t}, th,th,th^3))\setminus (D^1(th,th,th^3)\cup D^2(\frac{R}{t},  th^3)).
\end{equation}
\noindent
At $t=1$ we get
\begin{equation}
\Sigma_1^R(1)= \Sigma_0(R,h)\cup L(R, h,h,h^3)\setminus (D^1(h,h,\epsilon)\cup D^2(R,  h^3)), 
\end{equation}
\noindent
and as $t\to 0$, the family $\Sigma_1(t)$ converges in the Hausdorff topology to the union $\cup_{i=1,odd}^{2g+1}R_i$.  

Roughly speaking, for $t\in (0,1)$, the surface $\Sigma_1(t)$ is a larger sphere with a tiny cylinder inside capped off with disks, together with $g+1$ thin necks joining the inner component to the outer sphere.   At $t=0$ the set $\Sigma^R_1(0)$ is a union of $g+1$ equally spaced rays through the origin.

By Taylor expanding the Gaussian area of $F_h(h)$ we see that there exists $h_0>0$ and $B>0$ so that if $h\leq h_0$, there holds 
\begin{equation}\label{taylorlittle}
|F_h(h)|\leq Bh^2.
\end{equation}
By plugging in $\epsilon=th^3$ into \eqref{length} we obtain
\begin{equation}
|\Sigma^R_1(t)|\leq |S(R)| + |F_h(h)| + L(\epsilon) \leq \lambda(S^2_*)+ Bh^2 + Ah^3.
\end{equation}
Choose $h_1$ so that \begin{equation}Bh_1^2 + Ah_1^3<2-\lambda(S^2_*).\end{equation}  Then as long as $h\leq\min(h_1, h_0,.1)$ we obtain:
\begin{equation}
\sup_{t\in [0,1]} |\Sigma_1^R(t)| < 2.
\end{equation}
\noindent
{\it Step 2: Outer sphere to cylinders with caps.}
Fix $\Omega=\Omega(R)>0$ (it will be a large number to be determined later).  For $t\in [0,1]$ let us define
\begin{equation}
G(R,t)  = Cyl(R,\Omega t)\cup S(R,\Omega t).
\end{equation}
Note that $G(R,0)$ coincides with $S(R)$ and $G(R,1)$ is nearly a cylinder a long as $\Omega$ is sufficiently large. 

By Proposition \ref{areaellipsoids} there exists $\delta_1>0$ so that
\begin{equation}\label{good}
|G(R,t)| < 2-\delta_1 \mbox{ for all } t\in [0,1].
\end{equation}
Assume $\epsilon\ll \min(h,r)$.  Let $\tilde{D}^2(R, \epsilon, t)$ denote the small disks in  $G(R,t)$  bounded by the curves of intersection of $L(\epsilon)$ with $G(R,t)$ and let $\tilde{L}(R,h,r,\epsilon)$ denote the component of $L(\epsilon)\setminus (Cyl(r,h)\cup G(R,t))$ with boundary given by $\partial D^1(h,r,\epsilon)\cup \partial \tilde{D}^2(R,\Omega, \epsilon, t)$.    

We define a continuously varying one-parameter family $\{\Sigma_2^R(t)\}_{t\in [0,1]}$ with $\Sigma_2^R(0) = \Sigma_1^R(1)$ as follows:
\begin{equation}
\Sigma_2^R(t) = G(R,t)\cup F_h(h) \cup \tilde{L}(R,h,h,h^3))\setminus (D^1(h,h,h^3)\cup \tilde{D}^2(R,\Omega,h^3,t)). 
\end{equation}
Recall that for $h$ sufficiently small, we have
\begin{equation}\label{de}
|\tilde{L}(R,h,h,h^3)|\leq |L(\epsilon)|\leq Ah^3.
\end{equation}
In light of \eqref{good} and \eqref{de} and \eqref{taylorlittle} as long as $h\leq\min(h_0,h_1,.1)$ we obtain (for all $R$ in the desired range \eqref{ranger} and all $\Omega>0$):  
\begin{equation}
\sup_{t\in [0,1]} |\Sigma_2^R(t)| <2 - \delta_1+Ah^3+Bh^2.
\end{equation}
Choose $h_2$ to satisfy
\begin{equation}
Ah_2^3+Bh_2^2\leq \delta_1/2.
\end{equation}
Then as long as $h\leq\min(h_0,h_1,h_2,.1)$ we obtain
\begin{equation}
\sup_{t\in [0,1]} |\Sigma_2^R(t)| <2 - \delta_1/2.
\end{equation}

The variable $\Omega=\Omega(R)$ (i.e. the height of the cylinder) will be specified in the next two steps.  
\\
\\
{\it Step 3: Opening via truncated cones.}

For $t\in [0,1]$ let us set (recalling definition \ref{liftedcones})
\begin{equation}
G(R,\Omega, h, t) = C(R, R+\sin(\frac{t\pi}{2})(\Omega-h), h, \frac{t\pi}{2})\cup Ends(R,\Omega,h,t)\cup Cyl(R,h)
\end{equation} 
where $Ends(R,\Omega,h,t)$ is given as 
\begin{equation}
Ends(R,\Omega,h,t) = S(R, \Omega)\cup B(R,\Omega,h,t)
\end{equation}
with $B(R,\Omega,h,t)$ being the set (far from the plane $H$) traced out as the cone opens:
\begin{equation}
B(R,\Omega,h,t) = \{|z|>h\}\cap (\bigcup_{s\in [0,t]} \partial C(R, R+\sin(\frac{s\pi}{2})(\Omega-h), h, \frac{s\pi}{2}).
\end{equation}
\noindent
Assume $\Omega\geq R$.  Then all points in the support of $S(R,\Omega)$ are a distance at least $\Omega$ from the origin.  Thus we may bound
\begin{equation}\label{outthere1}
|S(R,\Omega)|\leq \frac{e^{-\Omega^2/4}area_{\mathbb{R}^3}(S(R))}{4\pi} \leq R^2e^{-\Omega^2/4} \leq\Omega^2 e^{-\Omega^2/4}.
\end{equation}

Note that points in $B(R,\Omega,h,t)$ are at least a distance $\sqrt{2}\Omega$ from the origin and $B(R,\Omega,h,t)$ consists of parts of a sphere of radius $\Omega-h\leq\Omega$.  Thus we obtain
\begin{equation}\label{outthere2}
|B(R,\Omega, h,t)| = D\Omega^2 e^{-\Omega^2/4}.
\end{equation}

Assuming $\Omega\geq R$ we obtain putting together \eqref{outthere1} and \eqref{outthere2} for some $E>0$, 
\begin{equation}\label{ends}
|Ends(R,\Omega,h,t)|= E\Omega^2 e^{-\Omega^2/4}.
\end{equation}

We define a one-parameter family $\{\Sigma_3^R(t)\}_{t\in [0,1]}$ (with $\Sigma_3^R(0) = \Sigma_2^R(1)$) as follows:
\begin{equation}
\Sigma_3^R(t) = G(R,\Omega, h,t)\cup F_h(h) \cup\tilde{L}(R,h,h,h^3))\setminus (D^1(h,h,h^3)\cup \tilde{D}^2(R,h^3,1)). 
\end{equation}
Since by assumption $R\geq .2$ by Proposition \ref{conesbound}, \eqref{ends}, \eqref{taylorlittle} and \eqref{allcyl} we get 
\begin{equation}
|\Sigma_3^R(t)| < 2-\delta_2 + Ah^3+ Bh^2+hR+ E\Omega^2e^{-\Omega^2/4}
\end{equation}
Choose $\Omega_1$ so large so that
\begin{equation}
E\Omega_1^2e^{-\Omega_1^2/4}\leq \frac{\delta_2}{4}
\end{equation}
and $h_3$ small enough so that 
\begin{equation}
5h_3+ Ah_3^3+ Bh_3^2\leq \frac{\delta_2}{4}.
\end{equation}
Then we get for $h\leq\min(h_0,h_1,h_2, h_3,.1)$ and $\Omega\geq\max(\Omega_1, R)$:
\begin{equation}\label{first}
\sup_{t\in [0,1]} |\Sigma_3^R(t)| < 2- \frac{\delta_2}{2}.
\end{equation}

\noindent
{\it Step 4: Enlarging inner cylinder.}
Set $r_{max}(R) = \max{(R/2,R-1)}$.   For $t\in [0,1]$,  let us define the family of radii
\begin{equation}
r_t= h+t(r_{max}(R)-h). 
\end{equation}
For $t\in [0,1]$ let us consider the family of inner cylinders with expanding radii 
\begin{equation}
F_h(r_t) = D(r_t,h)\cup Cyl(r_t, h).
\end{equation}
Let us define a one-parameter family $\{\Sigma_4^R(t)\}_{t\in [0,1]}$ (with $\Sigma_4^R(0) = \Sigma_3^R(1)$) as follows:
\begin{equation}\label{inccyl}
\Sigma_4^R(t) = F_h(r_t)  \cup G(R, \Omega,h, 1) \cup \tilde{L}_i(R, h, r_t,h^3)\setminus (D^1(r_t,h,h^3)\cup \tilde{D}^2(R,  h^3,1)). 
\end{equation}
Thus we obtain
\begin{equation}\label{areas}
|\Sigma_4^R(t)| \leq |D(R, \infty, h)| +|Cyl(R,h)|+|F_h(r_t)| +Ah^3+ E\Omega^2 e^{-\Omega^2/4}
\end{equation}

\noindent
Recalling the formula for area of a cylinder and that $R\leq 5$

\begin{equation}\label{e}|Cyl(R,h)|\leq hRe^{-R^2/4}\leq 5h.\end{equation}
and
\begin{equation}
|Cyl(r_t,h)|\leq 5h, 
\end{equation}
\noindent
Using these we obtain the Gaussian area bound
\begin{equation}\label{areas2}
|\Sigma_4^R(t)| \leq |D(R, \infty, h)| +|D(0,r_{t},h)|+10h+ Ah^3+ E\Omega^2 e^{-\Omega^2/4}.
\end{equation}
\noindent

Applying this together with the formula for Gaussian area of disks, we get 
the Gaussian area bound
\begin{equation}\label{step4}
\sup_{t\in [0,1]} |\Sigma_4^R(t)|\leq e^{-h^2/4}(2-|D(r_{t},R)|)+10h+2Ah^3+E\Omega^2 e^{-\Omega^2/4}.
\end{equation}
We expand the first term in $h$ and get
\begin{equation}\label{final}
\sup_{t\in [0,1]} |\Sigma_4^R(t)|\leq 2-|D(r_{t},R)|+10h+Ah^3+ h^4/16+E\Omega^2 e^{-\Omega^2/4}.
\end{equation}
Note finally that
\begin{equation}
\inf_{R\in [.2,5], t\in [0,1]} |D(r_{t}(R),R)|\geq \eta_2>0.
\end{equation}
Choose $\Omega_2$ large enough so that 
\begin{equation}
E\Omega_2^2 e^{-\Omega_2^2/4}\leq \eta_2/4.
\end{equation}
Choose $h_4$ so that
\begin{equation}
Ah_4^3+ h_4^4/16 \leq \eta_2/4.
\end{equation}
Thus we get for all $h\leq\min(h_0,h_1,h_2,h_3,h_4,.1)$ and $\Omega\geq\max(\Omega_1,\Omega_2, R)$ the bound
\begin{equation}\label{finall}
\sup_{t\in[0,1]} |\Sigma_4^R(t)| < 2-\frac{\eta_2}{4}.
\end{equation}
\noindent
{\it Step 5: Catenoid estimate.}
Set \begin{equation} r_{necks}(R)= (R+r_{max}(R))/2.\end{equation}   Consider the set $W$ obtained as the intersection of the circle of radius $r_{necks}(R)$ about the origin in the plane $H$ with the rays $\cup_{i=2, even}^{2g+2} R_i$.   Let $V(R)$ denote the union of those lines that contain a point in $W$ and are orthogonal to $H$.   Let $V(R,\delta)$ denote the boundary of the the $\delta$-tubular neighborhood about the set of lines $V(R)$.  Let $D^3(h, \delta)$ denote the union of the disks in  $D(0,R+\Omega-h,h)$ bounded by the intersection $D(0,R+\Omega-h,h)\cap V(R,\delta)$.   Let $V(R,h, \delta)$ denote the $g+1$ components of $V(R,\delta)\setminus D(0,R+\Omega-h,h)$ intersecting the plane $H$ (i.e. the vertical tubes).  Note that for some $F>0$ there holds
\begin{equation}
|V(R,h,\delta)|\leq F\delta.
\end{equation}

By the catenoid estimate (Theorem 2.4 in \cite{KMN}) there is an isotopy $\{\Sigma_5^R(t)\}_{t\in [0,1]}$ so that $\Sigma_5^R(0) = \Sigma_4^R(1)$ and which ends\footnote{This isotopy is the time-reversed version of Theorem 2.4 in \cite{KMN}.} at 
\begin{equation}\label{finalfive}
\Sigma_5^R(1) = D(0,\Omega+R-h, h) \cup Ends(R,\Omega,h,1)\cup V(h, \delta) \setminus D^3(h, \delta).
\end{equation}
Roughly speaking, the surface $\Sigma_5(1)$ looks like two planes parallel to $H$ capped off with spherical-like surfaces joined together by thin vertical tubes.

Taking $\delta=h^3$ we estimate
\begin{equation}
|\Sigma_5^R(1)| = 2 - \frac{1}{2}h^2 + Fh^3+E\Omega^2e^{-\Omega^2/4}.
\end{equation}
In order to apply the catenoid estimate we further shrink $h\leq h_c= \inf_{R\in [.2,5]} h_c(R)$ where $h_c(R)$ is the required value of $h$ in Theorem 2.4 in \cite{KMN}, and then finally set $\Omega_3= \Omega_
3(h)$ so that for all $\Omega\geq\Omega_3$ there holds
\begin{equation}
E\Omega^2e^{-\Omega^2/4}\leq h^3.
\end{equation}
Choosing $h\leq\min(h_c,h_0,h_1,h_2,h_3,h_4,.1)$ and then $\Omega(R,h)\geq\max{(R, \Omega_1,\Omega_2,\Omega_3(h)})$\footnote{Note now that any further shrinking of $h$ requires adjusting $\Omega$ to be larger.}, we obtain
\begin{equation}
|\Sigma_5^R(1)| \leq 2 - \frac{1}{2}h^2 + Gh^3\leq 2-\frac{1}{4}h^2.
\end{equation}
The isotopy provided by the catenoid estimate satisfies 
\begin{equation}
\sup_{t\in [0,1]} |\Sigma_5^R(t)| = 2 - Ch^2,
\end{equation}
for some $0<C<1/4$ as desired.

The final surface in the isotopy $\Sigma^R_5(1)$ consists of two components $P_1(R),P_2(R)$ with \begin{equation}P_1(R)=(D(0,\Omega+R-h,h)\cup Ends(R, \Omega, h,1))\cap\{z>0\}\end{equation} above $H$ and \begin{equation}P_2(R)=(D(0,\Omega+R-h, h)\cup Ends(R, \Omega,h,1))\cap\{z<0\}\end{equation} below $H$, joined by thin vertical tubes $V(R,h^3,h)$ through $H$.    
\\
\\
\noindent
{\it Step 6: Reparameterization.}   We concatenate the previous five isotopies by defining for each $.2\leq R\leq 5$

$$
\Sigma_t^R = 
\begin{cases}
\Sigma^R_1(6t)    &\mbox{ for        } 0\leq t\leq 1/6 ,\\
\Sigma^R_2(6t-1)  & \mbox{ for        } 1/6\leq t\leq 2/6 ,\\
\Sigma^R_3(6t-2) &\mbox{ for        } 2/6\leq t\leq 3/6 ,\\
\Sigma^R_4(6t-3) &\mbox{ for        } 3/6\leq t\leq 4/6 ,\\
\Sigma^R_5(6t-4) &\mbox{ for        } 4/6\leq t\leq 5/6 ,\\
\end{cases}
$$


Let $\Psi$ denote a diffeomorphism from $[0,\infty)$ to $[0,1)$ and denote $s_1:=\Psi(.2)$ and  $s_2:=\Psi(5)$.   For $s_1\leq s\leq s_2$ let us define the family of surfaces  \begin{equation}\Sigma_{s,t}:= \Sigma^{\Psi^{-1}(s)}_t.\end{equation}

\begin{prop}\label{contfamily}
The family $\{\Sigma_{s,t}\}_{(s,t)\in [s_1,s_2]\times [0,5/6]}$ is an $\mathbb{P}_{g+1}$-invariant genus $g$ family of surfaces satisfying
\begin{equation}\label{gt}
\sup_{(s,t)\in [s_1, s_2]\times [0,5/6]} |\Sigma_{s,t}|<2.  
\end{equation}
\end{prop}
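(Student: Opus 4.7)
The plan is to assemble the concatenated family $\Sigma^R_t$ defined in Step 6 and verify the three listed properties -- $\mathbb{P}_{g+1}$-invariance, constant genus $g$ on the interior, and the uniform area bound \eqref{gt} -- by combining the per-step conclusions of Steps 1--5, after fixing once and for all a single small $h>0$ satisfying the cumulative constraint $h\leq\min(h_c,h_0,h_1,h_2,h_3,h_4,0.1)$, and choosing $\Omega(R,h)$ to depend continuously on $R\in[0.2,5]$, say $\Omega(R,h):=1+\max(R,\Omega_1,\Omega_2,\Omega_3(h))$.

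First I would verify continuity and equivariance. Within each sub-interval $[k/6,(k+1)/6]$, the surface depends continuously on $(R,t)$ because every constituent piece (spheres, cylinders, disks, cones, necks, tubes) varies smoothly in its parameters and $\Omega$ is a continuous function of $R$; each sub-isotopy was built so that $\Sigma^R_k(1)=\Sigma^R_{k+1}(0)$, making the concatenation continuous at the junctions $t=k/6$. For $\mathbb{P}_{g+1}$-invariance, every building block is either rotationally symmetric about the $z$-axis and symmetric under $\tau_H$ (the spheres $S(R)$, cylinders $Cyl(r,h)$, doubled disks $D$, doubled cones $C$, the ends $Ends(R,\Omega,h,t)$, and the inner cap $F_h(r)$) or else placed along a union of rays forming a single $\mathbb{D}_{g+1}$-orbit -- the horizontal necks $L(R,h,r,\epsilon)$ along $\bigcup_{i\text{ odd}}R_i$ and the vertical tubes $V(R,h,\delta)$ along $\bigcup_{i\text{ even}}R_i$. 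Each orbit is preserved by $\mathbb{D}_{g+1}$ and each of its rays lies in $H$ hence is fixed by $\tau_H$, so $\mathbb{P}_{g+1}$ permutes the pieces of each $\Sigma_{s,t}$ setwise.

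Next I would compute the genus and assemble the area bound. For each interior parameter $(s,t)\in(s_1,s_2)\times(0,5/6)$, the slice $\Sigma_{s,t}$ consists of two components of topological sphere type joined by exactly $g+1$ thin tubes, giving Euler characteristic $\chi=2\cdot 2-2(g+1)=2-2g$, hence genus $g$; this invariant is preserved through each of the five isotopies. For the area bound, Steps 1--5 yield, uniformly in $R\in[0.2,5]$ under the above choices of $h$ and $\Omega$: $\sup_t|\Sigma^R_1(t)|\leq\lambda(\mathbb{S}^2_*)+Bh^2+Ah^3$, $\sup_t|\Sigma^R_2(t)|<2-\delta_1/2$, $\sup_t|\Sigma^R_3(t)|<2-\delta_2/2$, $\sup_t|\Sigma^R_4(t)|<2-\eta_2/4$, and $\sup_t|\Sigma^R_5(t)|\leq 2-Ch^2$. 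Setting
\[
\eta:=\min\bigl(2-\lambda(\mathbb{S}^2_*)-Bh^2-Ah^3,\;\delta_1/2,\;\delta_2/2,\;\eta_2/4,\;Ch^2\bigr),
\]
which is strictly positive by the choice of $h$, we obtain $\sup_{(s,t)}|\Sigma_{s,t}|\leq 2-\eta<2$, which is \eqref{gt}.

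The main obstacle is producing a single global $h$ and a continuous $\Omega(R,h)$ that meet the cumulative constraints of all five steps uniformly over $R\in[0.2,5]$: in particular Step 5 requires $h\leq h_c(R)$ from the catenoid estimate and Step 3 requires $\Omega\geq\Omega_3(h)$, either of which could \emph{a priori} degenerate as $R$ varies. This is resolved by the compactness of $[0.2,5]$ together with the continuous dependence of the catenoid estimate on its parameters, which gives $h_c:=\inf_{R\in[0.2,5]}h_c(R)>0$; once $h$ is chosen below this infimum and below all the $h_i$, the explicit continuous prescription of $\Omega(R,h)$ displayed above satisfies every lower bound simultaneously, allowing the five per-step conclusions to be assembled into the global statement.
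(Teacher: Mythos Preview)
Your proposal is correct and follows the same route as the paper, which in fact states Proposition~\ref{contfamily} without any separate proof: it is presented simply as a summary of the construction carried out in Steps~1--6. Your write-up makes explicit precisely what the paper leaves implicit---the uniform choice of $h\le\min(h_c,h_0,\dots,h_4,0.1)$ with $h_c=\inf_{R\in[0.2,5]}h_c(R)$, a continuous choice of $\Omega(R,h)$ dominating all the lower bounds, the $\mathbb{P}_{g+1}$-equivariance of each building block, and the assembly of the five per-step area estimates into a single strict upper bound below~$2$.
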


\subsection{Interpolation: Completion of proof of Theorem \ref{optimalsweepout}}
In this section, we complete the proof of Theorem \ref{optimalsweepout} by extending the sweepout $\Sigma_{s,t}$ defined on $[s_1,s_2]\times [0,\frac{5}{6}]$ to the entire rectangle $[0,1]\times[0,1]$ with the properties posited in Theorem \ref{optimalsweepout}.   In particular,  we must interpolate between the family of inversions given by the left and right boundaries of ($[s_1,t]_{t\in [0,5/6]}$, and $[s_2,t]_{t\in [0,5/6]}$) with the optimal sweepouts asserted at the left and right faces of the parameter space in Theorem \ref{optimalsweepout}.  Additionally, each surface on the bottom face $[s,5/6]_{s\in [s_1,s_2]}$ consists of two components as described above, one in $\{z>0\}$ and the other in $\{z<0\}$ joined by thin vertical necks and with total Gaussian area less than but very close to $2$.  We need to bring these areas down in order to have only small surfaces with small Gaussian area on the bottom face of our sweepout as posited in Theorem \ref{optimalsweepout}. 

The following lemma will be useful for this latter task.  It makes use of the mean-convex nature of half-spaces in the Gaussian metric.  

\begin{lemma}[Squeezing convex neighborhoods]\label{squeezinglemma}
For $k\geq 0$, let $\{\Gamma_t\}_{t\in S^k}$ be a smoothly varying family of rotationally symmetric two-spheres\footnote{For $k=0$, then $S^{0}$ denotes two points and the condition that the family is smoothly varying is vacuous.} contained $\{z>0\}$ such that
\begin{equation}
\sup_{t\in S^k} F(\Gamma_t) < \Lambda.  
\end{equation}
Then there exists a smoothlhy varying family of of rotationally symmetric two-spheres $\{\Gamma'_t\}_{t\in D^{k+1}}$ extending $\{\Gamma'_t\}_{t\in S^k}$ so that \begin{equation}\Gamma'_t\subset \{z>0\}\end{equation} for all $t\in D^{k+1}$ and 

\begin{equation}
\sup_{t\in D^{k+1}} F(\Gamma'_t) < \Lambda.  
\end{equation}
\end{lemma}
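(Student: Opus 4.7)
I plan to contract each $\Gamma_t$ to a common point $p^*$ high on the $z$-axis in two concatenated stages: an upward vertical translation, which decreases Gaussian area by Lemma \ref{easy}, followed by a linear dilation toward $p^*$, which once the surfaces have been pushed far from the origin yields only small Gaussian area. Write $\Lambda_0 := \sup_{t \in S^k} F(\Gamma_t) < \Lambda$. By compactness of $S^k$ and smoothness of $\{\Gamma_t\}$ there exist uniform constants $\eta > 0$ and $A < \infty$ with $\Gamma_t \subset \{z \geq \eta\}$ and Euclidean area $|\Gamma_t|_{\mathrm{Euc}} \leq A$ for every $t \in S^k$.

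Parameterize $D^{k+1}$ in polar coordinates $(r, t) \in [0, 1] \times S^k$ with $(0, t) \sim (0, t')$, so that $r = 1$ is the boundary and $r = 0$ is the center. Fix $U > 0$ (to be chosen large) and set $p^* = (0, 0, U)$. I define the extension in two pieces:
\[
\Gamma'_{(r, t)} \;=\; \Gamma_t + (0, 0, 2(1-r) U) \text{ for } r \in [1/2, 1],
\]
\[
\Gamma'_{(r, t)} \;=\; p^* + 2r \cdot \Gamma_t \text{ for } r \in [0, 1/2].
\]
The first stage is vertical translation by a nonnegative amount, so by Lemma \ref{easy} we have $F(\Gamma'_{(r, t)}) \leq F(\Gamma_t) \leq \Lambda_0 < \Lambda$ and $\Gamma'_{(r,t)} \subset \{z > 0\}$. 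The second stage matches the first at $r = 1/2$ (both give $\Gamma_t + (0,0,U)$), collapses to $\{p^*\}$ at $r = 0$ independently of $t$ (so the family descends to a continuous family on $D^{k+1}$), and preserves rotational symmetry about the $z$-axis throughout.

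For the area bound in stage two, observe that each point of $\Gamma'_{(r, t)}$ has the form $(2r x_0, 2r y_0, U + 2r w)$ for some $(x_0, y_0, w) \in \Gamma_t$ with $w \geq \eta > 0$. Its $z$-coordinate is therefore at least $U$, so $\Gamma'_{(r, t)} \subset \{z > 0\}$ and $|x|^2 \geq U^2$ on $\Gamma'_{(r, t)}$. Since dilation by $2r \leq 1$ multiplies Euclidean area by at most $1$, we obtain
\[
F(\Gamma'_{(r, t)}) \;\leq\; \frac{1}{4\pi}\, e^{-U^2/4}\, (2r)^2\, |\Gamma_t|_{\mathrm{Euc}} \;\leq\; \frac{A}{4\pi}\, e^{-U^2/4}.
\]
Choosing $U$ large enough that $\tfrac{A}{4\pi} e^{-U^2/4} < \Lambda$ concludes the area bound, and a standard cutoff can be applied at $r = 1/2$ to smoothly join the two stages.

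The main technical point is the uniformity in $t$: the constants $\eta$ and $A$ come from compactness of $S^k$, which allows a single $U$ to be chosen for the entire family. A minor cosmetic issue is that $\Gamma'_{(0, t)}$ is the single point $p^*$ rather than a literal smooth two-sphere; within the sweepout framework of Section 2 this degeneration is permitted, but if an honest smooth sphere is required at $r = 0$, one can instead stop the dilation at a small $\lambda_0 > 0$ and follow with a short isotopy within a small ball about $p^*$ (where the Gaussian weight is $\lesssim e^{-U^2/4}$) to a fixed round sphere of radius $\lambda_0$, contributing negligibly to the area.
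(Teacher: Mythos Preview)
Your argument is correct and takes a genuinely different route from the paper. The paper first invokes Smale's theorem to produce \emph{some} extension $\{\tilde\Gamma_t\}_{t\in D^{k+1}}$ of the boundary family (with a priori uncontrolled Gaussian area $\Lambda'\ge\Lambda$), and then pushes this entire extension upward by a large $\tau_0$ so that, by Lemma~\ref{easy}, $e^{-\tau_0^2/4}\Lambda'<\Lambda$; the outer annulus of $D^{k+1}$ is used to interpolate the translation. You instead translate first and then fill in the disk by an explicit linear contraction toward a fixed point $p^*$ on the $z$-axis, bounding the Gaussian area in the contraction stage directly via the uniform Euclidean area bound $A$ and the weight $e^{-U^2/4}$.

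Your approach is more elementary in that it avoids any appeal to Smale's theorem or the contractibility of the space of embeddings; everything is given by explicit formulas. The price is the degeneration to a point at the center, which, as you note, is admissible in the sweepout framework of Section~2 (and is easily repaired if an honest sphere is required). The paper's approach keeps genuine two-spheres throughout but leans on a heavier topological input. Both arguments hinge on the same key mechanism, namely Lemma~\ref{easy}: pushing surfaces into $\{z\gg 0\}$ makes their Gaussian area arbitrarily small.
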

\begin{proof}
By Smale's theorem \cite{Smale},  there exists an extension $\{\tilde{\Gamma}_t\}_{t\in D^{k+1}}$ of $\{\Gamma_t\}_{t\in S^k}$.  Let us suppose without loss of generality that
\begin{equation}
\sup_{t\in S^1} F(\tilde{\Gamma}_t) < \Lambda'.
\end{equation}
with $\Lambda'>\Lambda$.  

For $\tau\in\mathbb{R}$
denote the translation $T_\tau:\mathbb{R}^3\to\mathbb{R}^3$
\begin{equation}
T_\tau(x,y,z) = (x,y,z+\tau).
\end{equation}
Let us parameterize the disk $D^{k+1}$ by $(\rho,\theta)$ with $0\leq \rho \leq 1$ and $\theta\in S^k$.  
Then for any $\tau_)>0$ 
Let us now construct the extension $\Gamma_t$ defined for $t\in D^{k+1}$.
For $1/2\leq \rho \leq 1$ let us define
\begin{equation}
\Gamma'_{\rho,\theta} = \Gamma_{\rho, \theta}-2(t-1)\tau_0.
\end{equation}
For $0\leq \rho\leq 1/2$ let us define
\begin{equation}
\Gamma'_{\rho,\theta} = \tilde{\Gamma}_{2(\rho-1/2), \theta}+\tau_0.
\end{equation}
In light of Lemma \ref{easy} we obtain
\begin{equation}
\sup_{t} F(\Gamma'_t)\leq\mbox{max}({\Lambda,  e^{-\rho_0^2/4}\sup_t F(\tilde{\Gamma}_t)})\leq \mbox{max}({\Lambda, e^{-\rho_0^2/4}\Lambda'}).
\end{equation}
Choosing $\rho_0$ so large so that
\begin{equation}
\Lambda' e^{-\rho_0^2/4}<\Lambda, 
\end{equation}
gives the result.  
\end{proof}
We now complete the proof of Theorem \ref{optimalsweepout} by providing the desired extension.  Note that we may need to shrink $h$ further and thus increase $\Omega(5,h)$ in the following sections.  
\\
\\
\noindent
\emph{Left side:}
On the left boundary $[s_2,t]_{t\in [0,5/6]}$,  we first taper down the isotopies in Step 3 and then the isotopy from Step 2.   Toward that end,  for $\frac{s_1}{2} \leq s\leq s_1$,  and $0\leq t\leq \frac{5}{6}$ let us define for $K=\Psi^{-1}(.2)$
\begin{equation}
\Sigma_{s,t} = 
\begin{cases}
\Sigma^K_1(6t)   &\mbox{ for            } 0\leq t\leq 1/6 ,\\
\Sigma^K_2(6t-1) &\mbox{ for        } 1/6\leq t\leq 2/6 ,\\
\Sigma^K_3(2(6t - 2)(s-s_1/2)/s_1) &\mbox{ for        } 2/6\leq t\leq 3/6 ,\\
\Sigma^K_4(6t-3) &\mbox{ for        } 3/6\leq t\leq 4/6 ,\\
\Sigma^K_5(6t-4) &\mbox{ for        } 4/6\leq t\leq 5/6 ,\\
\end{cases}
\end{equation}

In this way, for $(s,t)\in [\frac{s_1}{2},s_1]\times [2/6,3/6]$ the truncated cones in Step 3 only open partially instead of becoming entirely planar.  
Because $R\leq .2$,  it follows from item (2) in Proposition \ref{areacones} that the end result of Step 3 has less Gaussian area than $|\Sigma_{s_1,{3/6}}|$.   Since the supports of the isotopies in Step 4, and Step 5 are disjoint from the conical pieces where adjustments have now been made in Step 3, it follows that 
\begin{equation}
\sup_{s\in [\frac{s_1}{2},s_1], t\in [0,\frac{5}{6}]}|\Sigma_{s,t}|<2.
\end{equation}

For $(s,t) \in [\frac{s_1}{4},\frac{s_1}{2}]\times [0,\frac{5}{6}]$ we taper down the isotopy of Step 2 by defining
\begin{equation}
\Sigma_{s,t} = 
\begin{cases}
\Sigma^K_1(6t)   &\mbox{ for            } 0\leq t\leq 1/6 ,\\
\Sigma^K_2(4(6t-1)(s-s_1/4)/s_1) &\mbox{ for        } 1/6\leq t\leq 2/6 ,\\
\Sigma^K_3(0) &\mbox{ for        } 2/6\leq t\leq 3/6 ,\\
\Sigma^K_4(6t-3) &\mbox{ for        } 3/6\leq t\leq 4/6 ,\\
\Sigma^K_5(6t-4) &\mbox{ for        } 4/6\leq t\leq 5/6 ,\\
\end{cases}
\end{equation}

Since the extension $\{\Sigma_{s,t}\}_{(s,t)\in [s_1/4,s_1/2]\times [0,5/6]}$ merely shrinks the height of a capped cylinder of radius $.2$,  it is clear the Gaussian areas of the above family are also much less than $2$.

Then we fill in the rectangle spanned by $[s_1/8,s_1/4]\times [0, 5/6]$ by first tapering down the isotopy provided by Step 5 and then tapering down the isotopy of Step 4 so that $\{s_1/8\}\times [0,5/6]$ only consists of the isotopy provided in Step 1.  Then it is straightforward to extend to the rectangle $[0,s_1/8]\times [0,5/6]$ by adjusting Step 1 to shrink the necks and inner component $F_h(h)$ down to zero so that $\{\Sigma_{0,t}\}_{t\in [0,\frac{5}{6}]}$ gives the optimal foliation $\{S_{1-\tau}\}$ for $\tau$ in a suitable range, as desired.

Finally, let us apply  the $k=0$ case of Lemma \ref{squeezinglemma} to extend the sweepout $\Sigma_{s,t}$ defined at $(s_1/4,5/6)$ to the segment $\{(s_1/4,t)\}_{t\in [5/6,1]}$ so that $\Sigma_{s_1,1}=\Theta$ is a union of a tiny $\mathbb{Z}_\infty$-invariant round sphere $\chi\subset\{z>0\}$ centered on the positive $z$-axis together with $\tau_H(\chi)$ and $g+1$ very thin tubes joining the two components.  By Lemma \ref{squeezinglemma}, we guarantee the Gaussian areas of this extension are less than $2$.
\\
\\
\emph{Right side:}
On the right side $[s_2,t]_{t\in [0,5/6]}$, we need to first amend Step 4 in the construction of $\{\Sigma_{s_2,t}\}_{t\in [0,5/6]}$ where instead of enlarging the radius of the inner cylinder while keeping its height fixed,  we convert the inner component to ellipsoids, and then allow the heights (minor axes) of the ellipsoids to increase in tandem with their radii (major axis) so that at the end of the deformation we arrive at the spherical foliation. Let us give the details.  

For any $h,r\in (0,\infty)$ the ellipsoid $E(r,h)$ is inscribed inside the capped cylinder $F_h(r)$ defined in \eqref{defofsmall}.  Let $\{E(r,h,\lambda)\}_{\lambda\in [0,1]}$ denote a family of $\mathbb{P}_\infty$-invariant convex sets satisfying: 
\begin{enumerate}[i.]
\item $E(r,h,1) = F_h(r)$
\item $E(r,h,0) = E(r,h)$
\item $E(r,h,\lambda)\subset E(r,h,\lambda')$ if $\lambda'\geq \lambda$
\item For each $r$, if $h$ is small enough there holds for some $C_r>0$: \begin{equation}\label{el}|E(r,h,\lambda)|\leq 2|D(0,r,0)| + C_rh \mbox{ for all } \lambda\in [0,1].
\end{equation}
\end{enumerate}

For $(s,t)\in [s_2, s_2+\iota]\times [0,5/6]$, we amend Step 4 by replacing the term $F_h(r)$ in equation \eqref{inccyl} with the surface $E(r_t, h, -(s - (s_2+\iota))/\iota)$.  

Using \eqref{el} and \eqref{step4}, if $h$ is small enough, we get a similar expansion for the area along this amended isotopy that we denote $\tilde{\Sigma}^s_4(t)$, 
\begin{equation}
\sup_{t\in [0,1]} |\tilde{\Sigma}^s_4(t)|\leq e^{-h^2/4}(2-|D(r_{t},5,0)|)+(5+C)h+2Ah^3+E\Omega^2 e^{-\Omega^2/4}.
\end{equation}
Shrinking $h$ and then increasing $\Omega(5,h)$ as in equations \eqref{final}-\eqref{finall} we obtain since $r_t\leq 4$
\begin{equation}
\sup_{s\in [s_2,s_2+\iota]}\sup_{t\in [0,1]} |\tilde{\Sigma}^s_4(t)|\leq 2-\eta_2/4.
\end{equation}

For  $(s,t)\in [s_2+\iota,s_2 + 2\iota]\times [0,5/6]$ we amend Step 4 (and so adjust in $[s_2+\iota,s_2 + 2\iota]\times [3/6,4/6]$) by replacing the term $F_h(r)$ in equation \eqref{inccyl} with $E(r_t, h_{t,s})$ where $r_t$ is as defined in Step 4 and the height is given by
\begin{equation}
h_{t,s} = \gamma_1(t)+ \gamma_2(t)s, 
\end{equation}
where $\gamma_1(t) = (r_t - h)/\iota$ and $\gamma_2(t) = h - (r_t-h)(s_2+\iota)/\iota$.  The functions $\gamma_1(t)$ and $\gamma_2(t)$ are chosen so that for all $0\leq t\leq 1$
\begin{enumerate}
\item $h_{t,s_2+\iota}=h$
\item $h_{t,s_2+2\iota}=r_t$.
\end{enumerate}
Since $r_t\leq 4$ we get $|h_{t,s}|\leq 4$ and thus we have from \eqref{ellipsoidaway} and \eqref{step4} that 
\begin{equation}\label{areas22}
\sup_{t\in [0,1]}|\tilde{\Sigma}^s_4(t)| \leq 2-\delta_3 +2e^{-5^2/4} +5h+ Ah^3+ E\Omega(5)^2 e^{-\Omega(5)^2/4}.
\end{equation}
Since $\delta_3=.065>2e^{-5^2/4}\approx.0039$, shrinking $h$ sufficiently and increasing $\Omega(5,h)$ sufficiently guarantees the Gaussian area bound


\begin{equation}
\sup_{s\in [s_1+\iota,s_1+2\iota]}\sup_{t\in [0,1]}|\tilde{\Sigma}^s_4(t)|< 2.
\end{equation}


As a result of the modification of Step 4 for the range $s\in [s_2+\iota,s_2+2\iota]$, the ellipsoid part of $\Sigma_4^s(1)$ juts ``higher" than the portion at constant height $h$.  Thus we need to also amend the surfaces which are the end result of Step 5, which we denote $\tilde{\Sigma}^s_5(1)$.  The end result of Step 5 will no longer be as in \eqref{finalfive}  comprised of disks parallel to $H$ (plus caps) but instead parts of the graphs $z_{h,4,b(s)}$ defined in \eqref{combinedgraphs} (plus caps).  Recall the function $b(s):=h_{1,s}$ interpolates from $b(s_2+\iota)=h$  to $b(s_2+2\iota)=4$. We set 
\begin{equation}
\tilde{\Sigma}^s_5(1) = (z_{h,4,b(s)}\cap B_{\Omega+R-h}(0)) \cup Ends(R,\Omega,h,1)\cup V(R,h, \delta) \setminus D^3(R,h, \delta).
\end{equation}

By Lemma \ref{graphs}, possibly shrinking $h$ further and increasing $\Omega(5,h)$ we obtain for some $c>0$
\begin{equation}
\sup_{s\in [s_2+\iota, s_2+2\iota]}\sup_{t\in [0,1]} |\tilde{\Sigma}^s_5(t)|\leq 2-ch^2.\end{equation}


This completes the extension to the rectangle spanned by $(s,t)\in [s_2,s_2+2\iota]\times [0,5/6]$.  We extend to $(s,t)\in [s_2+2\iota,s_2+3\iota]\times [0,5/6]$ by first tapering down Step 5 and then and then tapering down Steps 3 and 2. It is then straightforward to shrink the tubes from Step 1 down to zero.  In this way, $\Sigma_{t,0}$ coincides with the optimal foliation $S_\tau$ for $\tau$ in a suitable range of $[0,1]$.

Finally, on the arc $\{s_2+2\iota,t\}_{t\in [\frac{5}{6},1]}$ we again use Lemma \ref{squeezinglemma} to isotope the surface $\Sigma_{s_2+2\iota, 5/6}$ to $\Theta$ with Gaussian areas less than $2$.  One can check that the volume bounded by the surface $\Sigma_{s_2+2\iota, 5/6}$, and thus also surfaces assigned to this arc, all bound a volume much less than $V/2$ on one side as required in Proposition \ref{optimalsweepout}.
\\
\\
\emph{Bottom:}
On the arc $\{s,1\}_{s\in [s_1/4,s_2+2\iota]}$ we set $\Sigma_{s,1}:= \Theta$.  We now  extend our sweepout $\Sigma_{s,t}$ to the solid rectangle $R\subset I^2$ with boundary $\Gamma$ given by the segments $\{s,\frac{5}{6}\}_{s\in [s_1/4,s_2+2\iota]}$,  $\{s,1\}_{s\in [s_1/4,s_2+2\iota]}$,  $\{s_1/4,t\}_{t\in [\frac{5}{6},1]}$, and $\{s_2+2\iota,t\}_{t\in [\frac{5}{6},1]}$.  By construction, along $\Gamma$, the surfaces $\Sigma_\Gamma$ consist of a $\mathbb{Z}_\infty$-invariant sphere in $\{z>0\}$ together with the mirror image of the sphere in $\{z<0\}$ and $g+1$ very thin tubes joining them.  After shrinking these tubes further and applying Lemma \ref{squeezinglemma} we obtain the desired extension to the rectangle $R$ in $I^2$ with $\partial R = \Gamma$.  Since
\begin{equation}
\sup_{(s,t)\in\partial R} |\Sigma_{s,t}|< 2,
\end{equation}
Lemma \ref{squeezinglemma} guarantees that the extension also satisfies
\begin{equation}
\sup_{(s,t)\in R} |\Sigma_{s,t}|< 2.
\end{equation}
In summary, we have extended the sweepout to the entire square $I^2$ except for the tiny corner rectangles $[0,s_1/4] \times [5/6,1]$ and $[s_2+2\iota,1]\times [5/6,1]$.  Reparameterizing the domain to be a square completes the proof of Theorem \ref{optimalsweepout}.


\section{Lusternick-Schnirelman argument}\label{sectionls}

Using the ``flipping" sweepout constructed in Theorem \ref{optimalsweepout} we obtain the following result (cf. Theorem 1.6 in \cite{Ket3}) .  Recall that there exists $\varepsilon_{BW}>0$ with the property that any self-shrinker that is not the plane, sphere or cylinder has entropy at least $\lambda(S^1_*\times\mathbb{R})+\varepsilon_{BW}$.

\begin{thm}[Flipping optimal foliation]\label{existencefinal}
For each $g\geq 1$ there exists a $\mathbb{P}_{g+1}$-invariant self-shrinker $\Sigma_g$ with
\begin{enumerate}
\item $\lambda(\mathbb{S}^1_*\times\mathbb{R})+\varepsilon_{BW}< \lambda(\Sigma_g) <2$\label{entropy}
\item $genus(\Sigma_g)\leq g$.
\end{enumerate}
\end{thm}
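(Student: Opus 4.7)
The plan is to apply Theorem~\ref{highparamminmax} to the saturation $\Pi = \Pi_{\Sigma_{s,t}}$ of the $2$-parameter flipping sweepout $\{\Sigma_{s,t}\}_{(s,t)\in I^2}$ produced in Theorem~\ref{optimalsweepout}. Once the hypothesis
\begin{equation}
\lambda(\mathbb{S}^2_*) \;=\; \sup_{(s,t)\in\partial I^2}F(\Sigma_{s,t}) \;<\; W(\Pi,\mathbb{P}_{g+1}) \;<\; 2
\end{equation}
is verified, Theorem~\ref{highparamminmax} immediately delivers a smooth, embedded, $\mathbb{P}_{g+1}$-equivariant self-shrinker $\Sigma_g$ with $\lambda(\Sigma_g)=F(\Sigma_g)=W(\Pi,\mathbb{P}_{g+1})$ and $\mathrm{genus}(\Sigma_g)\leq g$, giving the genus bound in item~(2). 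The upper bound $W<2$ is immediate from item~(10) of Theorem~\ref{optimalsweepout}; the bulk of the work lies in the lower bound and in ruling out $\mathbb{S}^2_*$, $H$, and $\mathbb{S}^1_*\times\mathbb{R}$ as the min-max limit.

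For the strict inequality $W>\lambda(\mathbb{S}^2_*)$ I would run a Lusternick--Schnirelman argument in the spirit of \cite{K}. By item~(\ref{great}) each horizontal slice $\{\Sigma_{s,t}\}_{t\in I}$ is a non-trivial $1$-parameter $\mathbb{P}_{g+1}$-sweepout of Gaussian space, so the $1$-parameter min-max width of each slice is at least $\lambda(\mathbb{S}^2_*)$ (recall $\mathbb{S}^2_*$ is the lowest-entropy non-trivial shrinker above $H$ by \cite{BW}), and hence $W\geq\lambda(\mathbb{S}^2_*)$. To upgrade to strict inequality, suppose for contradiction that $W=\lambda(\mathbb{S}^2_*)$ and take a minimizing sequence $\{\Lambda^i_{s,t}\}\in\Pi$. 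After a pull-tight deformation, for each $s$ one may select $t^*_i(s)\in I$ at which $F(\Lambda^i_{s,t^*_i(s)})\to \lambda(\mathbb{S}^2_*)$; equivariant compactness forces the varifold limit along a subsequence to be a $\mathbb{P}_{g+1}$-invariant self-shrinker of Gaussian area $\lambda(\mathbb{S}^2_*)$, which by Bernstein--Wang \cite{BW} must be $\mathbb{S}^2_*$ itself. The continuously varying enclosed regions $R^i_{s,t^*_i(s)}$ then converge, at $s=0$ and $s=1$ respectively, to the two distinct complementary components of $\mathbb{R}^3\setminus\mathbb{S}^2_*$ as dictated by the orientation flip in item~(\ref{opposite}). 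But a family of regions, each close to a ball of radius $2$ centered at the origin, cannot continuously connect these two components while keeping Gaussian areas within $\lambda(\mathbb{S}^2_*)+o(1)$. This contradiction yields $W>\lambda(\mathbb{S}^2_*)$.

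With the width hypothesis verified, Theorem~\ref{highparamminmax} produces the $\mathbb{P}_{g+1}$-equivariant self-shrinker $\Sigma_g$ with $\lambda(\mathbb{S}^2_*)<\lambda(\Sigma_g)<2$. In particular $\Sigma_g$ is neither $H$ (entropy $1<\lambda(\mathbb{S}^2_*)$) nor $\mathbb{S}^2_*$ (entropy equal to $\lambda(\mathbb{S}^2_*)$). To exclude the cylinder, I would invoke the equivariant multi-parameter Morse index bound from \cite{MN,KL}: since $W<2=2\lambda(H)$ rules out higher-multiplicity convergence to a plane, the min-max limit satisfies $\mathrm{ind}_{\mathbb{P}_{g+1}}(\Sigma_g)+\mathrm{null}_{\mathbb{P}_{g+1}}(\Sigma_g)\geq 2$. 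Proposition~\ref{indexbounds} records $\mathrm{ind}_{\mathbb{P}_{g+1}}(\mathbb{S}^1_*\times\mathbb{R})+\mathrm{null}_{\mathbb{P}_{g+1}}(\mathbb{S}^1_*\times\mathbb{R})=1$, so $\Sigma_g$ cannot be the cylinder either. By the Bernstein--Wang entropy gap \cite{BW}, any remaining embedded self-shrinker satisfies $\lambda(\Sigma_g)\geq\lambda(\mathbb{S}^1_*\times\mathbb{R})+\varepsilon_{BW}$, completing item~(\ref{entropy}).

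The main obstacle is the Lusternick--Schnirelman step $W>\lambda(\mathbb{S}^2_*)$: making the orientation-flip argument rigorous requires a careful pull-tight in the equivariant Gaussian setting and a robust continuity-of-enclosed-region statement that survives passage to varifold limits. The index and entropy dichotomy that identifies $\Sigma_g$ among the possibilities is comparatively routine given Propositions~\ref{classification} and~\ref{indexbounds} together with \cite{BW,MN,KL}.
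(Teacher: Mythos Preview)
Your proposal follows essentially the same route as the paper: apply the equivariant min-max theorem to the flipping sweepout, use the Lusternick--Schnirelman argument to rule out $W=\lambda(\mathbb{S}^2_*)$, and then exclude the cylinder via the equivariant index bound from \cite{KL} together with Proposition~\ref{indexbounds}, concluding with the Bernstein--Wang gap. The upper bound $W<2$, the genus bound, and the exclusion of $H$ and $\mathbb{S}^2_*$ are handled exactly as in the paper.

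The one place where your sketch is genuinely incomplete is precisely the obstacle you flag: the selection $s\mapsto t^*_i(s)$ need not be continuous, so your ``continuously varying enclosed regions'' argument does not go through as written. The paper resolves this not by making the selection continuous but by a planar-topology dichotomy. One defines the set $\mathcal{S}^i_\varepsilon\subset I^2$ of parameters whose surfaces are varifold-close to $\mathbb{S}^2_*$, and argues that for $i$ large it must contain a path joining the left and right edges of $I^2$. If not, there is a path from bottom to top avoiding $\mathcal{S}^i_{\varepsilon_0}$; restricting the sweepout to this path gives a nontrivial $1$-parameter family (by item~(\ref{great}) and the Gaussian isoperimetric inequality) whose min-max limit must be $\mathbb{S}^2_*$ (here one rules out $H$ using item~(2) of Theorem~\ref{highparamminmax}, since $H$ contains segments of the singular set rather than meeting them orthogonally). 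But every min-max sequence along this path stays $\varepsilon_0$-far from $\mathbb{S}^2_*$, so none is almost-minimizing, and Pitts' combinatorial deformation drops the maximal area strictly below $\lambda(\mathbb{S}^2_*)$, a contradiction. The resulting left-to-right path in $\mathcal{S}^i_\varepsilon$, concatenated along $\partial I^2$ to the unique sphere-points on each side, then yields the orientation-reversal contradiction via item~(\ref{opposite}). This is the mechanism that replaces your continuity-of-regions step.
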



\begin{proof}
Let $\omega_2(\mathbb{P}_{g+1})$ denote the min-max width associated to the $\mathbb{P}_{g+1}$-equivariant saturation of $\{\Sigma_{s,t}\}_{(s,t)\in I^2}$ constructed in Theorem \ref{optimalsweepout}.   If  \begin{equation}\omega_2(\mathbb{P}_{g+1})>\sup_{(s,t)\in \partial I^2}|\Sigma_{s,t}|=\lambda(\mathbb{S}^2_*)\end{equation} then applying Theorem \ref{highparamminmax} we obtain a $\mathbb{P}_{g+1}$-invariant self-shrinker $\Sigma_g$ with entropy equal to $\omega_2(\mathbb{P}_{g+1})$. In light of the entropy bound $\omega_2(\mathbb{P}_{g+1})<2$ the self-shrinker $\Sigma_g$ occurs with multiplicity $1$.  By the genus bounds in Theorem \ref{highparamminmax} we obtain $genus(\Sigma_g)\leq g$.   By \cite{KL},  we get $\Sigma_g\neq \mathbb{S}^1_*\times\mathbb{R}$ as a two-parameter min-max procedure cannot produce the the equivariant index $1$ cylinder (Proposition \ref{indexbounds}). 

Thus it remains to rule out the equality case  \begin{equation}\omega_2(\mathbb{P}_{g+1})=\lambda(\mathbb{S}^2_*).\end{equation}  Let $\{\Sigma^i_{s,t}\}_{(s,t)\in I^2}$ denote a sequence of pulled-tight $\mathbb{P}_{g+1}$-sweepouts in the saturation satisfying
\begin{equation}\label{tight}
\sup_{(s,t)\in I^2} |\Sigma^i_{s,t}|\leq \lambda(S^2) + \frac{1}{i}.  
\end{equation}

For each $i>0$ and $\varepsilon>0$ let 

\begin{equation}\mathcal{S}^i_\varepsilon := \{(a,b)\in[0,1]\times[0,1] \;  | \;\mathcal{F}(\Gamma^i_{a,b}, \mathbb{S}^2_*)< \varepsilon\}\end{equation} 
\noindent
Note that for each $\varepsilon>0$ and positive integer $i$ we have, letting $t_0\in [0,1]$ be the unique value so that $S_{t_0}=\mathbb{S}^2_*$, that both $(0,t_0), (1,t_0)\in \mathcal{S}^i_\varepsilon$.

First we claim that for each $\varepsilon>0$,  there exists an integer $I(\varepsilon)$ large enough so that if $i>I(\varepsilon)$ then $\mathcal{S}^i_\varepsilon$ contains a continuous path $(a^i_\varepsilon(\eta), b^i_\varepsilon(\eta))_{\eta\in [0,1]}\subset [0,1]\times[0,1]$ beginning on the left side of the square and ending on the right side of the square.   Suppose not.   Then there exists $\varepsilon_0$ so that the claim fails.    Since the claim fails,  it follows that we can find a path $(c^i_\varepsilon(\tau) ,d^i_\varepsilon(\tau))_{\tau\in [0,1]}$ such that $(c^i_\varepsilon(0),d^i_\varepsilon(0))$ is on the bottom face of the square,  and $(c^i_\varepsilon(1),d^i_\varepsilon(1))$ is on the top face of the square so that
\begin{equation}\label{far}
F(\Phi^i_{c^i_\varepsilon(\tau),d^i_\varepsilon(\tau)},\mathcal{S}^i_\varepsilon) \geq \varepsilon_0 \mbox{ for all } \tau,
\end{equation}
for some subsequence of $i$ (not relabelled).

For each $i$, we consider the family $\Sigma^i_t:=\{\Phi^i_{c^i(\tau),d^i(\tau)}\}_{\tau\in [0,1]}$.  Let $\Pi_0$ denote the $\mathbb{P}_{g+1}$-saturation of the families $\{\Sigma^i_t\}_{t\in [0,1]}$.  Let us define the corresponding min-max width
\begin{equation}
\omega_1=\inf_{\Sigma'_t\in\Pi_0} \sup_{t\in [0,1]} |\Sigma'_t|.
\end{equation}

We claim 
\begin{equation} 
\omega_1=\lambda(\mathbb{S}^2_*).
\end{equation}

Indeed, let $\{\Sigma'_t\}_{t\in [0,1]}\in\Pi_0$.  Then we may consider a continuously varying family $\Omega_t$ of open sets so that $\Sigma_t=\partial\Omega_t$ for each $0\leq t\leq 1$.  Letting $g(t) = \mbox{vol}(\Omega_t)$ we have by item \ref{great} in Proposition \ref{optimalsweepout} that $g(0)< \eta_2<V/2$ and $g(1)>V-\eta_2>V/2$.  Thus for some $t_0\in (0,1)$ there holds $g(t_0)=V/2$.  By the Isoperimetric Inequality in Gaussian space \cite{ST}, this implies $|\Sigma'_{t_0}|\geq 1$.  Thus \begin{equation}\omega_1\geq 1.\end{equation}  On the other hand, by \eqref{tight} we have \begin{equation}\label{les} \omega_1\leq\lambda(\mathbb{S}^2_*)<2.\end{equation} 
Applying the Min-max Theorem \ref{highparamminmax} to the family $\Pi_0$, we obtain an embedded self-shrinker $\Gamma$ with entropy equal to $\omega_1$.  By item (2) in Theorem \ref{highparamminmax}, $\Gamma\neq H$ as $\Gamma$ must intersect the singular set of the group action orthogonally, but $H$ contains segments of the singular set.  By Bernstein-Wang \cite{BW} and \eqref{les} it follows that $\Gamma=\mathbb{S}^2_*$ and $\omega_1=\lambda(\mathbb{S}^2_*)$.

Thus the sequence of one-parameter families $\{\Phi^i_{c^i(\tau),d^i(\tau)}\}_{\tau\in [0,1]}$ is a minimizing sequence of sweepouts in $\Pi_0$.  On the other hand, by \eqref{far}, no min-max sequence obtained from it can be almost minimizing in annuli.  Thus by Pitts' combinatorial deformation (cf. Proposition 5.3 in \cite{CD}),  we obtain another element in $\Pi_0$ with maximal Gaussian area less than $\lambda(\mathbb{S}^2_*)$, implying
\begin{equation}
\omega_1 < \lambda(\mathbb{S}^2_*),
\end{equation}
a contradiction.  Thus the claim is established.

For each $\delta>0$ there exists $\varepsilon(\delta)>0$ so that the paths $(a^i_{\varepsilon(\delta)}(\eta), b^i_{\varepsilon(\delta)}(\eta))_{\eta\in [0,1]}$ joining the left side of the square to the right, concatenated with the paths connecting $(a^i_{\varepsilon(\delta)}(0),b^i_{\varepsilon(\delta)}(0))$ to $(0,t_0)$ and $(a^i_{\varepsilon(\delta)}(1),b^i_{\varepsilon(\delta)}(1))$ to $(1,t_0)$ on the left and right side, respectively,  is contained in $\mathcal{S}_\delta^i$.   This follows by contradiction because there by items \ref{a}-\ref{ee} in Proposition \ref{optimalsweepout} there is a unique point on the left side of the square (as well as on right) whose corresponding surface is a self-shrinking sphere and has Gaussian area $\lambda(\mathbb{S}^2_*)$.  Let us denote these concatenated paths by $(\tilde{a}^i_{\varepsilon(\delta)}(\eta), \tilde{b}^i_{\varepsilon(\delta)}(\eta))_{\eta\in [0,1]}$.  

Choosing $\delta$ small enough,  the paths $(\tilde{a}^i_{\varepsilon(\delta)}(\eta), \tilde{b}^i_{\varepsilon(\delta)}(\eta))_{\eta\in [0,1]}$ (whose corresponding surfaces are contained in a $\delta$-neighborhood about $\mathbb{S}^2_*$) join $\mathbb{S}^2_*$ to itself but with opposite orientation by item \ref{opposite} in Theorem \ref{optimalsweepout}.  This is impossible (cf. Lemma 3.2 in \cite{Ket3}). 

Thus we have ruled out the case that $\omega_2(\mathbb{P}_{g+1})=\lambda(\mathbb{S}^2_*)$.  This completes the proof.

\end{proof}

\section{Genus of $\Sigma_g$}\label{genussection}
In this section we prove Theorem \ref{main}\ref{3} which we restate: 

\begin{prop}\label{genustheorem}
For each $g\geq 1$, the self-shrinker $\Sigma_g$ has genus $g$.
\end{prop}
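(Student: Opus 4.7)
\emph{Plan.} The upper bound $genus(\Sigma_g) \leq g$ is given directly by conclusion (1) of the multi-parameter Min-max Theorem \ref{highparamminmax}, so the content of the proposition is the matching lower bound $genus(\Sigma_g) \geq g$. I would prove this by an equivariant topological argument rather than any further variational comparison.

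The strategy is to argue by contradiction. Every surface in the $\mathbb{P}_{g+1}$-saturation interior to $I^2$ is $\mathbb{P}_{g+1}$-equivariantly isotopic to the topological model built in Section 4: two concentric $\mathbb{P}_{g+1}$-invariant topological spheres joined by $g+1$ thin necks running along the lines $L_i \subset H$. These $g+1$ necks form a single $\mathbb{P}_{g+1}$-orbit, because each is stabilized only by the order-$4$ subgroup generated by the $\pi$-rotation $R_i$ about $L_i$ and the horizontal reflection $\tau_H$, so the orbit size is $|\mathbb{P}_{g+1}|/4 = g+1$. By the genus-tracking compactness theorem underlying Theorem \ref{highparamminmax} (the equivariant refinement of Simon-Smith from \cite{FKS} adapted to Gaussian space as in \cite{KZ}), any drop of genus in the varifold limit corresponds to an equivariant neckpinch along a full orbit. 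Since the only neck orbit has size $g+1$, pinching it collapses all $g+1$ necks at once, disconnecting the two spheres and producing a varifold with two disjoint smooth components of total genus $0$.

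This disconnected outcome is however ruled out: either directly by the Frankel-type property for Gaussian space recalled in the introduction (any two self-shrinkers in $\mathbb{R}^3$ must intersect, so the min-max limit cannot split into two disjoint smooth self-shrinkers), or equivalently by entropy---two disjoint smooth components would each have Gaussian area at least $\lambda(\mathbb{S}^2_*)=4/e$ by Bernstein-Wang \cite{BW}, for a total of at least $8/e>2$, contradicting the bound $\lambda(\Sigma_g)<2$ from Theorem \ref{existencefinal}. Hence no equivariant neckpinch occurs in passing to the limit, and $genus(\Sigma_g) = g$.

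The main obstacle I expect is the rigorous genus bookkeeping in the equivariant limit. One must rule out both (i) axial ``neckpinches'' arising when some of the four axial intersection points of $\Sigma_{s,t}$ with the $z$-axis (the poles of the two spheres) merge---an event that does not require pinching a full $\mathbb{P}_{g+1}$-orbit---and (ii) escape of a handle to infinity. For (i), the invariant isotopy type of the sweepout places no handles along the $z$-axis, so axial point-mergers are topologically trivial and do not drop the genus. For (ii), the Gaussian min-max formalism of Ketover-Zhou \cite{KZ} confines minimizing sequences to compact sets, preventing loss of topology at infinity for any fixed $g$.
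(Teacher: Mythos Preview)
Your high-level strategy matches the paper's: the upper bound comes from Theorem \ref{highparamminmax}, and the lower bound requires tracking how equivariant neckpinches on the min-max sequence can change the genus. However, there are two genuine gaps.

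\textbf{Ruling out genus $0$.} Your endgame confuses the post-surgery min-max sequence with the limit $\Sigma_g$ itself. The self-shrinker $\Sigma_g$ is already known to be connected (multiplicity one with entropy $<2$), so the Frankel property and the $8/e$ entropy count are vacuous here. What must be excluded is that the \emph{connected} self-shrinker $\Sigma_g$ has genus $0$. If every equivariant compression of the sweepout surface produces only spheres, the surgered sequence $\overline{\Gamma}_i$ sitting in a tubular neighborhood of $\Sigma_g$ would be a sphere, forcing $genus(\Sigma_g)=0$ --- not that $\Sigma_g$ is disconnected. The paper closes this case with Brendle's classification \cite{B}: a genus-$0$ embedded self-shrinker is the plane, sphere, or cylinder, and each has entropy at most $\lambda(\mathbb{S}^1_*\times\mathbb{R})$, contradicting the strict lower bound $\lambda(\Sigma_g)>\lambda(\mathbb{S}^1_*\times\mathbb{R})+\varepsilon_{BW}$ from Theorem \ref{existencefinal}. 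Your argument does not supply this step.

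\textbf{Classifying the compressions.} The assertion that ``the only neck orbit has size $g+1$'' is precisely what has to be proved, and it is not true that the $g+1$ meridian disks of the connecting tubes exhaust the equivariant compressing disks up to isotopy. A genus-$g$ Heegaard surface admits many compressing disks; what is needed is that \emph{every} $\mathbb{P}_{g+1}$-equivariant compression yields either a genus-$g$ piece together with spheres, or only spheres. The paper accomplishes this in Proposition \ref{surgeries} by passing to the quotient orbifold $\overline{\mathbb{R}}^3=\mathbb{R}^3/\mathbb{D}_{g+1}$, where the sweepout surface becomes a sphere with intersection data $(2,0,2)$ with the singular rays, and then enumerating all admissible partitions of this data under ordinary and $\mathbb{Z}_k$-neckpinches via Lemma \ref{surgprop} and the Riemann--Hurwitz formula \eqref{rh}. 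Several distinct compressions survive the parity constraints (e.g.\ the ordinary splitting $(2,0,0)+(0,0,2)$ and the $\mathbb{Z}_k$-splittings $(1,1,1)+(1,1,1)$, $(2,0,2)+(2,0,0)$, $(2,0,2)+(0,0,2)$), and one must check each lifts only to spheres or to a genus-$g$ surface plus spheres. Your remark that axial mergers are ``topologically trivial'' does not substitute for this enumeration.
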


Min-max limits such as $\Sigma_g$ are obtained topologically by performing a sequence of neckpinches on the approximating sequence (\cite{K} and Section 5 of \cite{KetG}). To prove Proposition \ref{genustheorem} we must classify such compressions up to isotopy. In general, a (Heegaard) surface with genus at least $2$ has infinitely many such compressing disks which makes this a challenging task.  However,  because a fundamental piece of any surface in the equivariant saturation $\Pi_{\Sigma_{s,t}}$ of the family $\{\Sigma_{s,t}\}_{(s,t)\in I^2}$ constructed in Theorem \ref{optimalsweepout} is a sphere with prescribed orthogonal intersection with the singular set of the group action $\mathbb{P}_{g+1}$ on $\mathbb{R}^3$, one can classify the finitely many possible equivariant compressions. 

Roughly speaking, the necks joining the two parallel spheres in a surface $\Sigma_{s,t}$ from a sweepout contained in $\Pi_{\Sigma_{s,t}}$ can break one way to give two concentric spheres, or the other way to give a sphere contained in $\{z>0\}$ and another sphere in $\{z<0\}$.  A key point is that no compression can produce a torus.  According to Berchenko-Kogan's numerical analysis \cite{BKogan},  the Angenent torus likely has equivariant index $2$ and thus would otherwise be difficult to rule out as min-max surface $\Sigma_g$ obtained from a two-parameter min-max process\footnote{In fact, if we worked with the Almgren-Pitts version of min-max theory as opposed to the Simon-Smith one, this could well be the min-max limit for each $g$.}.

Let $G\subset O(3)$ act on $\mathbb{R}^3$.  For $x\in \mathbb{R}^3$ the \emph{isotropy subgroup} $G_x\subset G$ is
 \begin{equation}
G_x = \{g\in G\; |\; gx = x\}.  
 \end{equation}
The \emph{singular set} of the group action is 
\begin{equation}
S_G = \{x\in\mathbb{R}^3\;|\; G_x\neq e\}.  
\end{equation}
In general, the set $S_G$ admits a decomposition as
\begin{equation}
S_G=S_G^0\cup S_G^1\cup S_G^2
\end{equation} where each component of $S_G^2$ is an open subset of a plane with isotropy $\mathbb{Z}_2$ (``reflections"), each component of $S_G^1$ is a straight line segment (with isotropy containing a subgroup of rotations isomorphic to $\mathbb{Z}_n$ for some $n$) and $S_G^0\subset\{ (0,0,0)\}$.  

Assume $\Sigma$ is a $G$-equivariant surface that is orthogonal to any component of $S_G$ that it intersects.  Let $S\subset S_G^1$ denote a connected segment with $\mathbb{Z}_n\subset G_S$ where $\mathbb{Z}_n$ is a  subgroup of rotations and $G_S$ is the isotropy group along $S$.  We say \emph{$\Sigma'$ is obtained from $\Sigma$ by an $\mathbb{Z}_n$-neckpinch along $S$} if the following is true:  
\begin{enumerate}
\item There exists two embedded $G_S$-equivariant disks $D_1, D_2\subset\mathbb{R}^3$ which each intersects $S$ once and a $G_S$-equivariant annulus $A\subset \Sigma$, disjoint from $S$ with $\partial A = \partial D_1\cup \partial D_2$.
\item $D_1\cup D_2\cup A$ bounds a solid three ball $B$ with $B\cap (S_G^1\cup S_G^0)=B\cap S$.
\item Letting $G'$ denote the stabilizer of $B$ (as a set), \begin{equation}\Sigma'=(\Sigma\setminus \bigcup_{[g]\in G/G'} g(A)) \cup (\bigcup_{[g]\in G/G'} g(D_1)\cup\bigcup_{[g]\in G/G'}g(D_2).\end{equation}
\end{enumerate}

We say \emph{$\Sigma'$ is obtained from $\Sigma$ by an ordinary neckpinch} if the following is true: 
\begin{enumerate}
\item There exists two embedded disks $D_1, D_2\subset\mathbb{R}^3$ and an annulus $A\subset \Sigma$ all disjoint from $S_G^1$ such that $\partial A = \partial D_1\cup \partial D_2$.
\item $D_1\cup D_2\cup A$ bounds a solid three ball $B$ with $B\cap S_G\subset B\cap S_G^2$.
\item Letting $G'$ denote the stabilizer of $B$ as a set, \begin{equation}\Sigma'=(\Sigma\setminus\bigcup_{[g]\in G/G'} g(A)) \cup (\bigcup_{[g]\in G/G'} g(D_1)\cup\bigcup_{[g]\in G/G'}g(D_2).\end{equation}

\end{enumerate}

\begin{rmk}
Note that in items (2) we allow the neckpinch to occur through planes of reflection which is why we only demand $B$ to intersect the $1$-dimensional stratum of the singular set in a particular way.
\end{rmk}

To classify possible compressions it will suffice to consider the subgroup $\mathbb{D}_{g+1}\subset \mathbb{P}_{g+1}$ consisting of orientation-preserving isometries. Let $\Gamma_g$ denote a sweepout surface isotopic to $\Sigma_{s,t}$ for any $(s,t)\in\mbox{int}(I^2)$ as constructed in Proposition \ref{optimalsweepout}.   Let us consider the quotient $\overline{\Gamma}_g :=\Gamma_g/\mathbb{D}_{g+1}$ inside the orbifold $\overline{\mathbb{R}}^3:=\mathbb{R}^3/\mathbb{D}_{g+1}$.  Let $\pi:\mathbb{R}^3\to\overline{\mathbb{R}}^3$.  And set $\overline{R}_i^+=\pi(R_i)$ for each $i\in\{1,...,2g+2\}$ and $\overline{Z}^+=\pi(Z^+)$.  Note that for $i$ odd, we get $\overline{R}_1^+=\pi(R_i)$ and for $i$ even we get $\overline{R}_2^+=\pi(R_i)$.
In the orbifold $\overline{\mathbb{R}}^3$, the three rays $\overline{R}_1^+$,$\overline{R}_2^+$, and $\overline{Z}^+$ meet at the origin point $O$.

The projected surface $\overline{\Gamma}_g$ is a sphere intersecting $\overline{R}_1^+$ twice, $\overline{R}_2^+$ zero times, and $\overline{Z}^+$ twice.   For \emph{any} sphere $\overline{G}\subset\overline{R}^3$ let us define a triple of non-negative integers (``the intersection data") $\vec{g} = (k_1,k_2, b)$ denoting its number of intersection points with $\overline{R}_1^+$, $\overline{R}_2^+$ and $\overline{Z}^+$ respectively.  For example, $\overline{\Gamma}_g$ has intersection data $(2,0,2)$.  

Assuming $\pi^{-1}(\overline{G})$ is connected, the Riemann-Hurwicz formula recovers the genus of $\pi^{-1}(\overline{G})$ if $\overline{G}$ has intersection data $(k_1,k_2,b)$. In the following let $k=k_1+k_2$. 
\begin{equation}\label{rh}
genus(\pi^{-1}(\overline{G})) = \frac{k}{2}-\frac{1}{2}g(4-2b-k)-1.
\end{equation}

We have the following:

\begin{lemma}\label{surgprop} Let $(\overline{G},\vec{g})$ be a sphere embedded in $\overline{\mathbb{R}}^3$.  An equivariant neckpinch disconnects $\overline{G}$ into two spheres $(\overline{G}_1,\vec{g}_1)$ and $(\overline{G}_2,\vec{g}_2)$ with $\vec{g}_i=(n_i,m_i,a_i)$ so that the following are true.
\begin{enumerate}[i.]
\item For a $\mathbb{Z}_k$-neckpinch  we have $\vec{g}_1+\vec{g}_2-2\vec{v} = \vec{g}$ and $\vec{v}$ is either $(0,0,1)$, $(0,1,0)$ or $(1,0,0)$.  \label{i}
\item For an ordinary neckpinch there holds $\vec{g}_1+\vec{g}_2=\vec{g}$  \label{ii}
\item In both cases, for each $i=1,2$ the parity of $n_i$, $m_i$ and $a_i$ coincide.  \label{iii}
\end{enumerate}
\end{lemma}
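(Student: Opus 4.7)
The plan is to verify the intersection-number relations (i)--(ii) by direct bookkeeping on the single surgery downstairs, and then to deduce the parity statement (iii) from a Jordan--Brouwer argument based on whether the new sphere encloses the origin $O$.

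First I would pass to the quotient. Since the upstairs neckpinch is $\mathbb{D}_{g+1}$-equivariant, it descends to a single surgery on $\overline{G}\subset\overline{\mathbb{R}}^3$: one removes an annulus $\overline{A}\subset\overline{G}$ (the projection of $A$), splitting $\overline{G}\setminus\overline{A}$ into two disks $\overline{F}_1,\overline{F}_2$, and then caps them off with disks $\overline{D}_1,\overline{D}_2$ (the projections of $D_1,D_2$) to obtain $\overline{G}_i:=\overline{F}_i\cup\overline{D}_i$ for $i=1,2$. For an ordinary neckpinch, the hypothesis $B\cap(S_G^1\cup S_G^0)\subset B\cap S_G^2$ implies that $\overline{A}$, $\overline{D}_1$, and $\overline{D}_2$ all miss $\overline{R}_1^+\cup\overline{R}_2^+\cup\overline{Z}^+$, so every intersection point of $\overline{G}$ with these three rays is retained by exactly one of $\overline{G}_1,\overline{G}_2$; this gives $\vec{g}_1+\vec{g}_2=\vec{g}$, establishing (ii). For a $\mathbb{Z}_k$-neckpinch along a segment $S$ of, say, $\overline{Z}^+$, the condition $B\cap(S_G^1\cup S_G^0)=B\cap S$ together with the definition forces $\overline{A}$ to avoid all three rays and forces $\overline{D}_1,\overline{D}_2$ to miss $\overline{R}_1^+\cup\overline{R}_2^+$, while each of $\overline{D}_1,\overline{D}_2$ meets $\overline{Z}^+$ in exactly one point (the two endpoints of $B\cap\overline{Z}^+$). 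Summing the intersections gives $\vec{g}_1+\vec{g}_2=\vec{g}+2(0,0,1)$, and the cases of a $\mathbb{Z}_k$-neckpinch along $\overline{R}_1^+$ or $\overline{R}_2^+$ are completely symmetric, yielding $\vec{v}=(1,0,0)$ or $(0,1,0)$ respectively. This establishes (i).

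For (iii), the key observation is that the orbifold $\overline{\mathbb{R}}^3$ has underlying topological space homeomorphic to $\mathbb{R}^3$, so each embedded sphere $\overline{G}_i$ bounds a topological three-ball $B_i\subset\overline{\mathbb{R}}^3$, and its complement contains a neighborhood of infinity. After an arbitrarily small equivariant perturbation upstairs we may assume $\overline{G}_i$ meets every one of the three rays transversally and is disjoint from the origin $O$. Each of the rays $\overline{R}_1^+$, $\overline{R}_2^+$, $\overline{Z}^+$ starts at $O$ and runs out to infinity, and each transverse crossing with $\overline{G}_i$ toggles whether one is inside $B_i$ or outside. Hence the number of crossings of any of the three rays with $\overline{G}_i$ is odd if and only if $O$ and infinity lie on opposite sides of $\overline{G}_i$, i.e.\ if and only if $O\in B_i$. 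Since this condition is intrinsic to $\overline{G}_i$ and does not depend on which of the three rays one considers, the three numbers $n_i$, $m_i$, $a_i$ must be simultaneously odd or simultaneously even, proving (iii).

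The main obstacle I anticipate is the genericity step needed to make $\overline{G}_i$ transverse to the three rays and disjoint from $O$; this is a standard general-position argument executed equivariantly upstairs, so it does not pose a serious difficulty. A mild subtlety is that the bookkeeping in (i) and (ii) implicitly uses the fact that the two sides of $\overline{A}$ in $\overline{G}$ are each a disk (not, say, a M\"obius band) so that $\overline{G}_1$ and $\overline{G}_2$ are both spheres rather than other surfaces; this follows because $\overline{A}$ bounds a three-ball $\overline{B}$ in $\overline{\mathbb{R}}^3$ and $\overline{G}$ is a sphere, so $\overline{A}$ separates $\overline{G}$.
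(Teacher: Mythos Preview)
Your argument is correct and follows essentially the same route as the paper: for (i)--(ii) you track how the single downstairs surgery partitions the intersection data (with the $\mathbb{Z}_k$-case adding one new intersection on each cap along the relevant ray), and for (iii) you use the Jordan--Brouwer fact that $\overline{\mathbb{R}}^3\cong\mathbb{R}^3$ so that each resulting sphere either encloses $O$ or not, fixing the common parity of all three intersection counts. Your added remarks on transversality and on why the two pieces are spheres are reasonable elaborations that the paper leaves implicit.
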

\begin{proof}
To verify \ref{i} and \ref{ii},  observe that any $\mathbb{Z}_k$-neckpinch performed on $\overline{G}$ partitions the intersection vector $\vec{g}$ into two summands and adds one to the same entry of both summands while an ordinary neckpinch gives a partition of the intersection vector without adding one to any entry.

To show \ref{iii}, observe first that $\overline{\mathbb{R}}^3$, as a quotient of $\mathbb{R}^3$ by orientation-preserving isometries, is itself homeomorphic to $\mathbb{R}^3$ and thus the following topological considerations apply. A sphere contained in $\overline{\mathbb{R}}^3\setminus O$ and a ray with tip at $O$ have odd intersection number if the sphere bounds a region in $\overline{\mathbb{R}}^3$ containing $O$ and even intersection number if not.  
Each $\overline{G}_i\subset \overline{\mathbb{R}}^3\setminus O$ bounds a $3$-ball in $\overline{\mathbb{R}}^3$ which either contains the origin $O$ or not.  If it contains the origin, it has odd intersection number with each of the rays $\overline{R}_1^+$, $\overline{R}_2^+$, $\overline{Z}^+$ emanating from it.  Otherwise it has even intersection number with all three rays. This gives item \ref{iii}.

\end{proof}

The following is the main consequence of this analysis that we will use.  

\begin{prop}[Surgeries on Sweepout Surfaces]\label{surgeries}
Let $\Gamma\subset\mathbb{R}^3$ be a closed embedded $\mathbb{P}_{g+1}$-invariant genus $g$ surface with $\pi(\Gamma)\subset\overline{\mathbb{R}}^3$ a sphere with intersection type $(2,0,2)$.

Suppose $\Gamma$ is $\mathbb{P}_{g+1}$-equivariant and  is obtained from $\Phi$ after a sequence of $\mathbb{P}_{g+1}$-equivariant neckpinches.  Then $\Gamma_g$ consists of a surface of genus $g$ isotopic to $\Gamma$ together with a union of spheres, or else is a union of spheres.  

\end{prop}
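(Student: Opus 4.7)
The plan is to descend to the quotient orbifold $\overline{\mathbb{R}}^3 = \mathbb{R}^3/\mathbb{D}_{g+1}$ and analyze the surgery combinatorics using the parity constraint of Lemma \ref{surgprop}(iii) together with the Riemann-Hurwicz formula \eqref{rh}. I would proceed by induction on the number of equivariant neckpinches, showing that at each stage $\overline{\Gamma}$ consists of a finite disjoint union of spheres in $\overline{\mathbb{R}}^3$, each of which either has type $(2,0,2)$ (and hence lifts upstairs to a connected genus-$g$ surface) or has an intersection triple for which \eqref{rh} returns $\gamma \leq 0$, so that the upstairs lift is a disjoint union of $2$-spheres. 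Moreover, at most one component has type $(2,0,2)$, and when present, its upstairs lift is $\mathbb{P}_{g+1}$-equivariantly isotopic to $\Gamma$.

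The base case is immediate from the hypothesis on $\Gamma$. For the inductive step, any neckpinch applied to a ``small-type'' component (whose upstairs lift is already a disjoint union of $2$-spheres) can only split a sphere into two spheres, so the upstairs lift remains a disjoint union of $2$-spheres. The substantive case is a neckpinch applied to the distinguished $(2,0,2)$-component. By Lemma \ref{surgprop} the sum of the two resulting intersection triples equals $(2,0,2) + 2\vec{v}$ where $\vec{v} \in \{(0,0,0), (1,0,0), (0,1,0), (0,0,1)\}$, subject to the constraint that the old intersection counts $2, 0, 2$ distribute between the two pieces (so each entry of the old distribution is bounded by the corresponding entry of $(2,0,2)$) and each triple satisfies the uniform-parity condition of Lemma \ref{surgprop}(iii). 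A direct case-by-case enumeration yields:
\begin{align*}
\text{ordinary:} &\quad (0,0,0) + (2,0,2) \text{ or } (2,0,0) + (0,0,2),\\
\mathbb{Z}_n \text{ along } \overline{R}_1^+:&\quad (2,0,0) + (2,0,2),\\
\mathbb{Z}_n \text{ along } \overline{R}_2^+:&\quad (1,1,1) + (1,1,1),\\
\mathbb{Z}_n \text{ along } \overline{Z}^+:&\quad (0,0,2) + (2,0,2).
\end{align*}
In every case at most one resulting piece has type $(2,0,2)$, and the other has $\gamma \leq 0$ upstairs. The invariant is thus preserved.

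For the isotopy identification, I would argue via a ``bounding ball'' argument: each sphere component detached from the distinguished $(2,0,2)$-component at some stage bounds a $\mathbb{P}_{g+1}$-equivariant 3-ball disjoint from the surviving main component in $\mathbb{R}^3$. Consequently each neckpinch step that detaches a sphere may be equivariantly reversed by contracting the detached sphere through its bounding ball back to the main component, and composing these reversals in reverse order produces the desired equivariant isotopy from the main genus-$g$ component of $\Gamma_g$ back to $\Gamma$.

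The principal technical burden is the parity enumeration above. The key mechanism is that a $\mathbb{Z}_n$-neckpinch along an axis contributes $+1$ to the corresponding coordinate of each resulting triple, which can change the parity of that coordinate; in the critical case $\mathbb{Z}_n$ along $\overline{R}_2^+$, the second entry of each new triple is forced to equal $1$ (since the old $\overline{R}_2^+$-count is $0$), flipping the required parity to odd and forcing the unique split $(1,1,1)+(1,1,1)$. This mechanism rules out all potentially ``bad'' splits (such as $(4,0,2)+(0,0,0)$, whose $\overline{R}_1^+$-count $4$ exceeds the maximum of the old distribution plus the new $+1$; or $(0,2,2)+(2,0,0)$, whose second entries violate the forced value of $1$) without invoking topological realizability separately.
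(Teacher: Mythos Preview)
Your proposal is correct and follows essentially the same approach as the paper: both descend to the quotient $\overline{\mathbb{R}}^3$, invoke the parity constraint of Lemma~\ref{surgprop}(iii), and exhaust the admissible splittings of the $(2,0,2)$ data. Your case list matches the paper's exactly---you organize by axis ($\vec{v}$) while the paper organizes by partition first, but the surviving outcomes $(0,0,0)+(2,0,2)$, $(2,0,0)+(0,0,2)$, $(2,0,0)+(2,0,2)$, $(1,1,1)+(1,1,1)$, $(0,0,2)+(2,0,2)$ are identical. You are somewhat more explicit than the paper about the inductive structure and about handling further neckpinches on the already-detached ``small-type'' pieces, and you supply an outline for the isotopy claim (via bounding balls) that the paper's proof omits; these are mild improvements rather than a different route.
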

\begin{proof}
The surface $\Phi$ has intersection data given by $(2,0,2)$.  We claim that performing an equivariant neckpinch on $\Phi$ either produces a union of spheres or else a surface of type $(2,0,2)$ and a union of spheres.  

Up to permuting the two components, there are five possible partitions of the intersection data of $\Phi$: $(1,0,1), (1,0,1)$ and $(1,0,2), (1,0,0)$ as well as $(2,0,0), (0,0,2)$ and $(2,0,1), (0,0,1)$, and finally $(2,0,2),(0,0,0)$.

First let us consider ordinary neckpinches, which by Lemma \ref{surgprop}ii are enumerated by the possible partitions above.   All of them are excluded by Lemma \ref{surgprop}\ref{iii} except for $(2,0,0), (0,0,2)$ which lifts to a union of spheres and $(2,0,2)$ and $(0,0,0)$ which corresponds to a genus $g$ surface together with a union of spheres.

To consider equivariant $\mathbb{Z}_k$-neckpinches, in each of the above five partitions, we have three possibilities for $\vec{v}$, leaving $15$ cases to consider.  We will show that all but those claimed in the statement of the proposition are ruled out by items \ref{iii} in Lemma \ref{surgprop}.   For the partition $(1,0,1), (1,0,1)$, only $\vec{v}= (0,1,0)$ does not violate \ref{iii}), giving resulting spheres with data $(1,1,1), (1,1,1)$.   By \eqref{rh} these spheres lift to a union of two-spheres.  For the partition, $(1,0,2), (1,0,0)$,  only $\vec{v} = (1,0,0)$ giving the resulting spheres with data $(2,0,2), (2,0,0)$ does not violate \ref{iii}.  For the partition $(2,0,0), (0,0,2)$, all choices of $\vec{v}$ violate item \ref{iii}.  For the partition  $(2,0,1)$, $(0,0,1)$, only setting $\vec{v}= (0,0,1)$ does not violate \ref{iii}, giving two spheres with data $(2,0,2), (0,0,2)$ for which the genera are again $g$ and $0$ by \eqref{rh}.
Finally for the partition $(2,0,2)$, $(0,0,0)$, no choice of $\vec{v}$ gives an admissible choice of $\vec{g}_1$ and $\vec{g}_2$.
\end{proof}
\noindent

Recall that for a smooth embedded surface $\Sigma\subset\mathbb{R}^3$ (possibly with boundary) after choosing a normal $n(p)$ on $\Sigma$ we denote the tubular neighborhood:
\begin{equation}
T_\epsilon(\Sigma) = \{\exp_{p}(tn(p))\;|\; p\in\Sigma, t\in [-\epsilon,\epsilon]\}.
\end{equation}
For $\epsilon$ small enough, $T_\epsilon(\Sigma)$ is diffeomorphic to $\Sigma\times [-\epsilon,\epsilon]$.  Moreover, $\partial T_\epsilon(\Sigma)$ consists of two components $\partial T^+_\epsilon(\Sigma)$ and $\partial T^-_\epsilon(\Sigma)$ each isotopic to $\Sigma$ given by 
\begin{equation}
\partial T^\pm_\epsilon(\Sigma)= \{\exp_{p}(tn(p))\;|\; p\in\Sigma, t=\pm\epsilon\}.
\end{equation}
\noindent
The families $\{\partial T^\pm_\delta(\Sigma)\}_{\delta\in [0,\epsilon]}$ smoothly foliate a neighborhood of $\Sigma$.  

For some $\epsilon_0$ small enough, by virtue of this smooth foliation, the following is true.  There exist $C>0$ and $\lambda>0$ so that for any closed curve $\gamma$ contained in $\partial T^\pm_\epsilon(\Sigma)$ for some $\epsilon\in [0,\epsilon_0]$ of length at most $\lambda$, there exists an embedded disk $D\subset\partial T^\pm_\epsilon(\Sigma)$ with boundary $\gamma$ and diameter at most $C\lambda$ (cf. Proposition 2.3 in \cite{DP}).
\\
\newline
\emph{Proof of Proposition \ref{genustheorem}.}
Let us show that \begin{equation}genus(\Sigma_g) = g.\end{equation}  By the Gaussian area bound $\lambda(\Sigma_g)>\lambda(S^1\times\mathbb{R})$ (item 2 in Theorem \ref{existencefinal}) and Brendle's classification \cite{B} we get that $genus(\Sigma_g)>0$.  Thus we may assume toward a contradiction that \begin{equation}\label{isbelow} genus(\Sigma_g)=h\mbox{      with     } 0 <h<g.\end{equation} For large large $i$, after performing finitely $\mathbb{Z}_k$ and ordinary neckpinches on the min-max sequence $\Gamma_i$ (and discarding some connected components of the result) we obtain a surface $\overline{\Gamma}_i$ contained in a tubular neighborhood about $\Sigma_g$ and isotopic to $\Sigma_g$.   By Proposition \ref{surgeries}, the genus of $\Gamma'_i$ is equal to $0$ or $g$.  Since $genus(\overline{\Gamma}'_i)=genus(\Sigma_g)$ we get a contradiction to \eqref{isbelow}.  Let us give the details. 

The self-shrinker $\Sigma_g$ intersects the $1$-dimensional part of the singular set of the group action $S_G^1$ in finitely many points $p_1,...,p_k$.  Since $\partial T_\lambda(\Sigma_g)$ consists of two parallel components it thus intersects $S_G^1$ in $2k$ points which we denote $\{p^\pm_1(\lambda),...,p^\pm_k(\lambda)\}$.

For any $\delta>0$, by the varifold convergence $\Gamma_i\to \Sigma_g$, we obtain for $i$ large enough 
\begin{equation}
\mathcal{H}^2(\Gamma_i\cap (T_\epsilon(\Sigma_g)\setminus T_{\epsilon/2}(\Sigma_g))\leq \delta.  
\end{equation}
Fix $\epsilon<\epsilon_0$.  By the co-area formula and Sard's lemma, we may choose $\eta_i\in [\epsilon/2,\epsilon]$ so that
\begin{equation}\label{smallcircles}
\mathcal{H}^1(\Gamma_i\cap \partial T_{\eta_i}(\Sigma_g))<4\delta/\epsilon.
\end{equation}
and so that $\Gamma_i$ intersects $\partial T_{\eta_i}(\Sigma_g)$ transversally in a union of circles $C^i_1,...,C^i_k$.  If $\delta$ is sufficiently small by \eqref{smallcircles} and the choice of $\epsilon$, each of these circles has short length and bounds a small disk of bounded diameter in $\partial T_{\eta_i}(\Sigma_g)$ which thus can only intersect at most one of the points $\{p^\pm_1(\eta_i),...,p^\pm_k(\eta_i)\}$. 

We may consider the corresponding circles in $\Gamma_i\cap \partial T_{\eta_i\pm \tau}(\Sigma_g)$ for some tiny $\tau$.  By transversality, these circles bound annuli and we may perform a series of neckpinch surgeries supported in $T_{\eta_i+\tau}(\Sigma_g)\setminus T_{\eta_i-\tau}(\Sigma_g)$. If the circles $C^i_l$ along which we surger contain one of the points $p_l(\eta_i\pm\tau)$ in their interior, then one performs a $\mathbb{Z}_k$-neckpinch. Otherwise, it is an ordinary neckpinch.  To arrive at $\Gamma'_i$  after this surgery process we then discard any connected components contained in $\mathbb{R}^3\setminus T_{\eta_i-\tau}(\Sigma_g)$.  

The resulting surface $\Gamma'_i$ has the following properties: 
\begin{enumerate}
    \item $\Gamma'_i=\Gamma_i$ in $T_{\eta_i-2\tau}(\Sigma_g)$
    \item $\Gamma_i'\subset T_{\eta_i}(\Sigma_g)$.
    \item $genus(\Gamma_i')= g\mbox{ or } 0$.
\end{enumerate}


It follows from Section 5.1 in \cite{K} that using the improved version of Simon's lifting lemma one may perform further equivariant neckpinches and further isotopies on $\Gamma_i$ within $T_{\eta_i}(\Sigma_g)$ in order to obtain a surface $\overline{\Gamma}_i$ isotopic to $\Sigma_g$. This completes the proof.

Finally let us give the straightforward modifications in the case that $\Sigma$ is noncompact.  Choose $R$ so large so that for $s\geq R$, the set $B_{s}(0)\cap\Sigma_g$ consists of curves $E_1(s),...E_k(s)$ that each bounds an end of $\Sigma_g$ diffeomorphic to $\mathbb{S}^1\times[0,\infty)$.  Replace in the previous argument $\Gamma_i$ with $\Gamma_i\cap B_{R+1}(0)$ and $\Sigma_g$ with $\Sigma_g\cap B_{R+1}(0)$.  Then any curves with some support in $\partial T_{\eta_i}(\Sigma_g)\cap B_{R}(0)$ are closed and avoid $\partial B_{R+1}(0)$ by the choice of $\epsilon_0$ above (otherwise, their diameter would be too large).  Thus in the same way we may perform equivariant neckpinches on $\Gamma_i$ to obtain a surface $\Gamma_i'$ with $\Gamma'_i\cap B_R(0)\subset T_{\eta_i}(\Sigma_g)$.  By the improved version of Simon's lifting lemma, after isotopies and further surgeries on $\Gamma_i'\cap B_R(0)$, we obtain a surface $\Gamma_i''$ isotopic to $\Sigma_g\cap B_R(0)$.  
\qed




\section{Limit of $\Sigma_g$}\label{limitsection}

In this section we prove Theorem \ref{main}\ref{4} and \ref{6} which we restate: 

\begin{thm}\label{mainsec}
The self-shrinkers $\Sigma_g$ satisfy the following properties:  
\begin{enumerate}[(i)]
\item 
 \begin{equation}\label{limit}\lim_{g\rightarrow\infty}\Sigma_g= 2H \mbox{ as varifolds.}\end{equation}
\item For any subsequence $g\to\infty$, up to taking a further subsequence, the convergence in \eqref{limit} is smooth on compact subsets away from a single circle.
\end{enumerate}
\end{thm}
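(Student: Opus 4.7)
The plan is to pass to a subsequential varifold limit of $\Sigma_g$, identify it as $2H$ via the Kleene--M\o ller classification together with conservation of Gaussian $F$-mass, and then perform a blowup analysis on the gap between the two sheets of $\Sigma_g$ near $H$ to isolate the single circle along which smooth convergence can fail.

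Along any subsequence $g \to \infty$, uniform boundedness $F(\Sigma_g) = \lambda(\Sigma_g) < 2$ allows us to extract (after passing to a further subsequence) an integral varifold limit $V$. Since the Gaussian weight is bounded, positive and rapidly decaying, $F$-mass passes to the limit, so $F(V) = \lim_g F(\Sigma_g) = 2$ by Theorem~\ref{main}\ref{1}. The standard compactness theorem for self-shrinkers with bounded entropy (combined with the entropy bound strictly below $2$ to exclude immersed limits) gives that the support of $V$ is a smooth embedded self-shrinker and $V$ is an integer multiple of it. The $\mathbb{P}_{g+1}$-invariance of $\Sigma_g$ passes to $\mathbb{P}_\infty$-invariance of $\mathrm{supp}(V)$. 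By Proposition~\ref{classification}, $\mathrm{supp}(V) \in \{\mathbb{S}^2_*, \, \mathbb{S}^1_* \times \mathbb{R}, \, \text{Angenent torus}, \, H\}$. Using the $F$-values $4/e$, $\sqrt{2\pi/e}$, $\approx 1.85$ (see \cite{Bkog}), and $1$ respectively, the equation $m\,F(\mathrm{supp}(V)) = 2$ has the unique integer solution $m = 2$, $\mathrm{supp}(V) = H$. Hence $V = 2H$, which gives (i).

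For (ii), since $V = 2H$ with multiplicity $2$, for any compact $K \subset H$ and for large $g$ the varifold convergence allows us to express $\Sigma_g$ locally as the union of two smooth normal graphs $u_g^+ > u_g^-$ over $K$, except in an arbitrarily small neighborhood of where necks joining the two sheets meet $H$. Set $w_g := u_g^+ - u_g^- > 0$ and normalize $\tilde w_g := w_g / \|w_g\|_{C^0(K)}$. Since $u_g^\pm$ each satisfy the self-shrinker equation and differ by a vanishing amount, a standard linearization (detailed in Section~\ref{appendix}) shows that $\tilde w_g$ converges, on a further subsequence and smoothly on compact subsets of $H$ (minus possible neck accumulation loci), to a nontrivial, nonnegative, rotationally invariant solution $w_\infty$ on $H$ of the Jacobi equation
\begin{equation}
\Delta_H w_\infty - \tfrac{1}{2}\langle x, \nabla w_\infty\rangle + \tfrac{1}{2} w_\infty = 0.
\end{equation}
By the appendix's reduction to the confluent hypergeometric (Kummer) equation, the unique smooth radial solution is proportional to $M(-\tfrac{1}{2}, 1, r^2/4)$, which has exactly one positive zero $r_0 > 0$. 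Consequently $w_\infty > 0$ on $H \setminus C_{r_0}$, so on any compact $K \subset H \setminus C_{r_0}$ the gap $w_g$ is uniformly bounded below (relative to $\|w_g\|_{C^0}$) for large $g$; this precludes neck formation over $K$. There $\Sigma_g$ is genuinely a union of two disjoint smooth normal graphs, and Brakke's regularity applied to the upper and lower halves $\Sigma_g \cap \{\pm z \geq 0\}$ separately (each with multiplicity-one limit) yields the asserted smooth convergence. Setting $r := r_0$ finishes (ii).

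The principal obstacle is the blowup step for $\tilde w_g$: one must extract a nontrivial subsequential limit with the correct regularity, including across incipient neck locations where $w_g$ may vanish; verify its rotational invariance and smooth extension to the origin; and use the precise Kummer structure to conclude the zero set is a single circle rather than multiple circles or a positive-measure set.
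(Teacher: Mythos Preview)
Your argument for (i) is circular. You invoke Theorem~\ref{main}\ref{1} to assert $\lim_g F(\Sigma_g)=2$, but the equality $\lim_g\lambda(\Sigma_g)=2$ is a \emph{consequence} of $\Sigma_g\to 2H$, not an independent input; at this point in the paper the only available bounds are $\lambda(\mathbb{S}^1_*\times\mathbb{R})+\varepsilon_{BW}<\lambda(\Sigma_g)<2$ from Theorem~\ref{existencefinal}. With only these bounds, the equation $m\,F(\mathrm{supp}\,V)\in(\lambda(\mathbb{S}^1_*\times\mathbb{R})+\varepsilon_{BW},\,2]$ does \emph{not} uniquely force $V=2H$: the Angenent torus with multiplicity $1$ has entropy $\approx 1.85$ and survives this test. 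The paper rules it out by a genus argument (Proposition~\ref{surgeries}: no equivariant neckpinch on the sweepout surfaces can produce a torus), which your proposal omits. You also assert that ``standard compactness'' gives a smooth embedded limiting self-shrinker; a priori the limit is only an $F$-stationary integral varifold, and the paper spends real effort (the density analysis in Proposition~\ref{limitclass} and the subsequent lemma) ruling out immersed and singular limits.

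Your argument for (ii) contains a genuine error and a missing idea. You claim the limiting Jacobi field $w_\infty$ is nonnegative and proportional to $M(-\tfrac12,1,r^2/4)$, hence positive off a single circle $C_{r_0}$. But $M(-\tfrac12,1,r^2/4)$ \emph{changes sign} at its unique zero: it is positive for $r<r_1$ and strictly negative for $r>r_1$ (see the discussion after \eqref{series}). Thus a nonnegative $w_\infty$ cannot be globally proportional to $M$, and your mechanism for locating the circle collapses. In the paper the logic runs in the opposite direction: Proposition~\ref{nofunction} shows there is \emph{no} positive radial Jacobi field on $H\setminus\{0\}$, which forces the set $\mathcal{S}$ of non-smooth convergence to be \emph{nonempty}. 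The fact that $\mathcal{S}$ consists of \emph{at most one} circle is proved by an entirely different argument you do not attempt: a genus/Euler-characteristic count in tubular neighborhoods (Proposition~\ref{limitclass}) showing that all $g$ handles concentrate at a single circle, so a second circle of non-smooth convergence would force genus at least $2g$.
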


Let $\Sigma_\infty$ be a varifold limit of $\Sigma_g$ for some subsequence $I$ of $g$.  We will show $\Sigma_\infty = 2H$, which implies \eqref{limit}. The varifold $\Sigma_\infty$ is an $F$-stationary integral varifold invariant under the group $\mathbb{P}_\infty$. Let us define the circle \begin{equation}C(p,r) = \{(x,y,z)\; |\; x^2+y^2 = r^2 \mbox{ and } z = p\} \end{equation}
\noindent
Let $N(p,r,\epsilon)$ denote the $\epsilon$-tubular neighborhood about $C(p,r)$.

We need the following definition (letting $K$ denote any subsequence of $I$):
\begin{equation}
\begin{split}
\mathcal{S}(K) :=\{C(p,r) \; |\; & \mbox{ for some subsequence $J\subset K$ and all } \epsilon>0, \mbox{ the convergence} \\ & \; \Sigma_{J}\mres N(p,r,\epsilon)\to \Sigma_\infty\mres N(p,r,\epsilon) \mbox{ is not smooth.}\}
\end{split}
\end{equation} 
\noindent
Note that $\mbox{sing}(\Sigma_\infty)\subset \mathcal{S}(I)$ but $\mathcal{S}(I)$ may be a strictly larger set (and in our setting this will turn out to be the case).

First we show

\begin{prop}\label{limitclass}
For some subsequence $J\subset I$, the set $\mathcal{S}(J)\setminus\{\mbox{z-axis}\}$ is one of the following:  
\begin{enumerate} 
\item empty 
\item a circle $C$ centered about the origin in $H$. 
\end{enumerate}
In the latter case,  the density of $\Sigma_\infty$ at $C$ is $2$.
\end{prop}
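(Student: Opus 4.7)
The plan is to exploit the $\mathbb{P}_{g+1}$-symmetry to reduce $\mathcal{S}(J)$ to a union of rotationally invariant circles, then use Allard's regularity with the entropy bound $\lambda(\Sigma_g)<2$ to force density $2$ at any such bad circle and locate its support, and finally to rule out the existence of multiple bad circles via a Jacobi-field blow-up argument.

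First, since each $\Sigma_g$ is $\mathbb{P}_{g+1}$-invariant and the groups $\mathbb{P}_{g+1}$ become dense in $\mathbb{P}_\infty$ as $g\to\infty$, the varifold limit $\Sigma_\infty$ is $\mathbb{P}_\infty$-invariant; hence $\mathcal{S}(J)$ is also $\mathbb{P}_\infty$-invariant. Away from the $z$-axis, a $\mathbb{P}_\infty$-invariant closed subset of $\mathbb{R}^3$ is a union of circles $C(p,r)$, grouped in $\tau_H$-symmetric pairs when $p\neq 0$. At any point $x$ on a non-axis circle $\gamma\in\mathcal{S}(J)$, the Gaussian density of $\Sigma_\infty$ is an integer bounded above by $\lim_g\lambda(\Sigma_g)\leq 2$ via Huisken-type monotonicity; it cannot equal $1$, since then Allard's regularity theorem applied with uniform entropy control strictly below $2$ would force smooth convergence of $\Sigma_g\to\Sigma_\infty$ in a neighborhood of $x$, contradicting $x\in\gamma\subset\mathcal{S}(J)$. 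Hence the density equals exactly $2$ on every circle of $\mathcal{S}(J)\setminus\{z\text{-axis}\}$.

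Next I would argue the support of $\Sigma_\infty$ near $\gamma$ is locally horizontal and globally coincides with $2H$. Density $2$ at $x$ means the tangent variety is a multiplicity-$2$ plane containing the tangent direction of $\gamma$; by $\mathbb{P}_\infty$-equivariance of the support, this plane is either horizontal (so local support is $\{z=p\}$) or slanted (whose $\mathbb{P}_\infty$-orbit traces out a piece of a cone or cylinder). Extending by invariance and invoking Proposition \ref{classification} together with the entropy budget $F(\Sigma_\infty)\leq 2$, the only $\mathbb{P}_\infty$-invariant smooth shrinker that can carry a density-$2$ sheet is $H$ itself, since $\mathbb{S}^2_*$, $\mathbb{S}^1_*\times\mathbb{R}$ and the Angenent torus all have entropy strictly greater than $1$ and so cannot support multiplicity $2$ within the entropy budget. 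Consequently $p=0$, so every circle of $\mathcal{S}(J)\setminus\{z\text{-axis}\}$ is centered at the origin in $H$ and carries density $2$.

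Finally, to rule out two distinct bad circles $C(0,r_1)$ and $C(0,r_2)$ with $r_1<r_2$: on the annular region $A=\{r_1<r<r_2\}\cap H$ the convergence is smooth with multiplicity $2$, so for $g$ large $\Sigma_g$ locally decomposes as two graphs $z_g^{\pm}$ over a neighborhood of $A$ with $z_g^{\pm}\to 0$ smoothly. The rescaled separation profile $w_g:=(z_g^+-z_g^-)/\max_A(z_g^+-z_g^-)$ is positive, rotationally symmetric and of unit $L^\infty$-norm on $A$. A subsequence converges to a positive radial solution $W$ of the self-shrinker Jacobi equation on $H$, which is the confluent hypergeometric (Kummer) equation studied in Section \ref{appendix}. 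The formation of necks at $r_1$ and $r_2$ forces $W$ to vanish (or satisfy degenerate asymptotics) at both endpoints, but the Sturm-Liouville structure of the Kummer equation rules out such a positive radial profile, yielding the contradiction. The main obstacle is this last step: one must carefully justify the Jacobi-field blow-up and the boundary conditions it inherits from neck formation, and then extract from the Kummer analysis of the appendix a non-existence statement for positive radial Jacobi fields vanishing on two concentric circles; the first two steps are essentially standard compactness and regularity for integral $F$-stationary varifolds with symmetry.
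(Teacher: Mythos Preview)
Your outline diverges from the paper's proof in a fundamental way, and the divergence introduces real gaps.

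\textbf{What the paper actually does.} The paper's argument is topological, not analytic. It shows directly that for \emph{any} circle $C(p,r)\in\mathcal{S}(J)\setminus\{z\text{-axis}\}$, the full genus $g$ of $\Sigma_g$ is captured in a small tubular neighborhood $N(p,r,\epsilon)$ for $g$ large. This is proved by passing to the $\mathbb{Z}_{g+1}$-quotient, relating the Euler characteristics upstairs and downstairs, and classifying the homology types of the boundary curves of $\Sigma_g\cap N(\epsilon)$ in the torus $\partial N(\epsilon)$. Components whose boundary curves have small longitudinal number $e_L\leq 2$ are shown, via Ilmanen's integrated Gauss--Bonnet bound and $\epsilon$-regularity, to converge smoothly; this forces one component with $e_L=4$, which necessarily has genus $g$. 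Once this is known, uniqueness of the bad circle is immediate from the genus budget, and the $\tau_H$-symmetry forces it into $H$ (else there would be a mirror circle carrying another copy of the genus).

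\textbf{Gaps in your approach.} First, your density dichotomy omits the half-integer case: a priori the tangent cone at a singular point of a stationary integral $1$-varifold could be a triple junction with density $3/2$; the paper rules this out using that $\Sigma_g$ are orientable. Second, and more seriously, your argument that every bad circle lies in $H$ invokes the Kleene--M{\o}ller classification of smooth $\mathbb{P}_\infty$-invariant shrinkers applied to $\Sigma_\infty$. But at this point $\Sigma_\infty$ is only an $F$-stationary integral varifold with possible singularities; the paper establishes smoothness of $\Sigma_\infty$ in the \emph{next} lemma, and that lemma uses the present proposition as input. So your route is circular as stated. Third, your Jacobi-field argument to exclude two concentric bad circles is only a sketch: you have not justified why the normalized separation $W$ inherits Dirichlet (or any determinate) boundary behavior at $r_1,r_2$ from ``neck formation,'' nor why the radial Kummer equation admits no positive solution on an annulus with those boundary conditions. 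The appendix only rules out positive radial Jacobi fields on all of $H\setminus\{0\}$; it says nothing about annuli, and for a second-order ODE the existence of a positive solution vanishing at two prescribed radii depends on those radii. These are not minor details---they are precisely where your argument must do the work that the paper's genus-counting argument handles cleanly.
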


\begin{proof}

The $F$-stationary integral varifold $\Sigma_\infty$ is $\mathbb{P}_{\infty}$-invariant. Moreover, because 
\begin{equation}
\limsup_{g\to\infty} \lambda(\Sigma_g)\leq 2, 
\end{equation}
it follows that \begin{equation}\lambda(\Sigma_\infty)\leq 2.\end{equation}  Indeed, otherwise, there exists $(x,t)\in\mathbb{R}^2\times (0,\infty)$ so that $F_{x,t}(\Sigma_\infty)$ is arbitrarily close to a number greater than $2$.  But by the continuity of each $F$-functional under varifold convergence, this implies $F_{x,y}(\Sigma_g)$ is also greater than $2$ for a subsequence of $g$, contradicting the fact that $\lambda(\Sigma_g)<2$. 

By the entropy bound $\lambda(\Sigma_\infty)\leq 2$ and rotational symmetry, the density $\Theta(\Sigma_\infty,x)$ at any singular point $x\in\mbox{supp}(\Sigma_\infty)$ is $2$ or $3/2$.   As a limit of orientable surfaces, it follows that $3/2$ does not occur. Any tangent cone to $\Sigma_\infty$ at a point in $\mbox{sing}(\Sigma_\infty)$ consists of two distinct planes, and by Allard's theorem \cite{All} a tangent cone to $\Sigma_\infty$ at a point in $\mathcal{S}(I)\setminus\mbox{sing}(\Sigma_\infty)$ consists of a single plane with multiplicity $2$. 

Choose a subsequence $J\subset I$ and corresponding point $q\in\mathcal{S}(J)\setminus \{\mbox{z-axis}\} \subset \mathcal{S}(I)\setminus \{\mbox{z-axis}\}$ (with $p,r$ chosen so that $q\in C(p,r)$) where for any sufficiently small $\epsilon$ the convergence $\Sigma_{_J}\mres N(p,r,\epsilon)\to \Sigma_\infty\mres N(p,r,\epsilon)$ is not smooth (and there is no subsequence for which it is smooth). 

Let us partition $\mathcal{S}(J)=\mathcal{S}_0(J)\cup\mathcal{S}_1(J)$, where we set \begin{equation}\mathcal{S}_0(J)=\mathcal{S}(J)\cap H\mbox{  and   } \mathcal{S}_1=\mathcal{S}(J)\setminus\mathcal{S}_0(J) \end{equation} 


Let us denote by $N(\epsilon)$ the set $N(p,r,\epsilon)$.  We claim that for the subsequence $J$ of $g$:
\begin{equation}\label{isg}
genus(\Sigma_g\cap N(\epsilon_g)) = g \mbox{ for } g \mbox{ large enough}.
\end{equation}
for some $\epsilon_g\in [\epsilon/2,2\epsilon]$ for which $\Sigma_g$ intersects $N(\epsilon_g)$ transversally.  But if $q\in \mathcal{S}_1(J)$ this is a contradiction as by the reflective symmetry the genus of $\Sigma_g$ near $C(-p, r)$ is also $g$.  Thus $\mathcal{S}_1(J)\setminus\{\mbox{z-axis}\}=\emptyset$.  
Applying the identical argument to $\mathcal{S}_0(J)$ implies that $\mathcal{S}_0(J)\setminus\{\mbox{z-axis}\}$ contains at most one point.   Thus it suffices to prove the claim.

Consider \begin{equation}\Gamma'_g:=\Sigma_g\cap N(\epsilon_g)\end{equation} and \begin{equation}\Gamma_g=\Gamma'_g/\mathbb{Z}_{g+1},\end{equation}  where $\mathbb{Z}_{g+1}$ is the subgroup of $\mathbb{P}_{g+1}$ corresponding to rotations by angle $2\pi/(g+1)$ about the $z$-axis.  
Since $N(\epsilon_g)$ is disjoint from the $z$-axis,  this action is free and we get
\begin{equation}\label{gb}
\chi(\Gamma_g')=(g+1)\chi(\Gamma_g).  
\end{equation}
Denote by $h$ the genus of $\Gamma_g$, and $h'$ the genus of $\Gamma_g'$.  Let $e_L(g)$ denote the number of connected curves of $\partial\Gamma_g$ that lift to connected curves in $\partial\Gamma'_g$.  Recall that the first integral homology group of the torus  $\partial N(\epsilon)$ is equal to $\mathbb{Z}\oplus\mathbb{Z}$, where the first factor denotes the number of meridianal loops and the second longitudinal loops.  The only elements corresponding to embedded curves are of the form $(k,l)$ where $k$ and $l$ are relatively prime. The number $e_L(g)$ counts the curves of type $(k,l)$ for $l\neq 0$.

By untangling \eqref{gb} using the fact that boundary curves of $\Gamma_g$ that do not lift to connected curves must lift to $g+1$ such curves in $\Gamma_g'$ we obtain
\begin{equation}
h' = \frac{ge_L(g)}{2}-g+h(g+1), 
\end{equation}
We also have by assumption \begin{equation}\label{isless}h'\leq g.\end{equation}
Let $C'_g$ denote those components of $\Gamma_g'$ that contribute to the limit $\Sigma_\infty$ in $N(\epsilon/2)$.  By the monotonicity formula, $C_g'$ consists of at most $\Lambda$ components (independent of $g$).

If any element of $C_g'$ has a boundary curve $\partial\Gamma_g'$ of type $(k,l)$ for $k\geq 1$ and a sequence $g\to\infty$, then we get that $\partial N(\epsilon')$ for some $\epsilon'\in [\epsilon/2,\epsilon]$ is contained in the the support of $\Sigma_\infty$, which is impossible. 

Thus for large $g$, each component in $C'_g$ has all boundary curves of the form $(0,1)$ or $(0,0)$. Suppose such a component satisfies $e_L(g)=0$ in which case all its boundary curves are of the form $(0,0)$.  Then the only possible solution to \eqref{isless} and \eqref{gb} is if we set $h=1$ and $h'=1$.  Thus components with $e_L(g)=0$ have genus $1$.  If instead $e_L(g)=2$, we must have $h=0$ and $h'=0$.  Thus such components give a union of planar domains.

Let $C_g''\subset \Sigma_g'$ denote those connected components with $e_L(g)\leq 2$ and thus genus $0$ or $1$.  By the bound on the cardinality of $C_g'$, the combined genus of $C_g'$ is at most $\Lambda$.   By Ilmanen's integrated Gauss-Bonnet argument, (\cite{I}, \cite{K}) we obtain
\begin{equation}\label{second}
\int_{C_g''\cap N(\epsilon/2)} |A|^2 d\mu \leq \Lambda'< \infty.  
\end{equation}

By standard $\epsilon$-regularity theory \cite{CS}, the bound \eqref{second} ensures that the convergence of $C_g'$ in $N(\epsilon/2)$ is smooth away from finitely many points.  However, as $C_g'$ is invariant under $\mathbb{P}_{g+1}$ for larger and larger $g$, this is impossible unless the set of finitely many points is in fact empty.  Thus the surfaces $C_g'$ converge to some integral varifold limit $\Sigma'_\infty$ smoothly in $N(\epsilon/2)$ with density $1$ or $2$ at $(r_j,0,p_j)$.  From the definition of $\mathcal{S}$, we get that $C_g'\setminus C_g''$ must converge to its limit non-smoothly in $N(\epsilon/2)$. This forces the density of $\Sigma_\infty$ at $(r_j,0,p_j)$ to be at least $3$, contradicting the density bound of $2$.

Thus we have shown that all components in $C'_g$ must have $e_L(g)\geq 4$ for $g$ large enough.  If a component satisfies $e_L = 4$, the only solutions to \eqref{isless} and \eqref{gb} occur when $h'=g$ and $h=0$.  If a component satisfies $e_L(g)\geq 6$, there are no solutions to \eqref{isless} and \eqref{gb}.  Thus $C'_g$ contains exactly one element of genus $g$ and the genus of $\Sigma_g\cap N(\epsilon/2)$ for large $g$ is also equal to $g$, as desired.

This completes the proof of the claim, and thus the proposition.

\end{proof}

Now we show that $\mbox{sing}(\Sigma_\infty)=\emptyset$.  The argument amounts to considering possible behavior of $F$-stationary integral $1$-varifolds with at worst a single point of density $2$ (i.e. a crossing singularity):
\begin{lemma}
The set $\mbox{sing}(\Sigma_\infty)$ is empty. Moreover, $\Sigma_\infty = 2H$.   
\end{lemma}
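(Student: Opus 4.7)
The plan is to combine the structural output of Proposition \ref{limitclass} with the Kleene--M\o ller classification (Proposition \ref{classification}), the equivariant index computation (Proposition \ref{indexbounds}), and the surgery rigidity of Proposition \ref{surgeries}, using the entropy sandwich from Theorem \ref{existencefinal}. Continuity of $F$ under varifold convergence gives
\begin{equation}
\lambda(\mathbb{S}^1_*\times\mathbb{R}) + \varepsilon_{BW} \;\leq\; F(\Sigma_\infty) \;\leq\; 2.
\end{equation}

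In the first case of Proposition \ref{limitclass}, where $\mathcal{S}(J)\setminus\{z\text{-axis}\}=\emptyset$, the convergence $\Sigma_g\to\Sigma_\infty$ is smooth with multiplicity one off a closed set contained in the $z$-axis. By $\mathbb{P}_\infty$-invariance, any tangent cone along the $z$-axis is rotationally symmetric of density at most $2$, hence either a single plane or $2H$; Allard's regularity then promotes this to smoothness of the regular part. Proposition \ref{classification} identifies each component of $\Sigma_\infty$ with $\mathbb{S}^2_*$, $\mathbb{S}^1_*\times\mathbb{R}$, the Angenent torus, or $H$. I would rule out each non-$2H$ option in turn: Proposition \ref{surgeries} forbids a torus component, because equivariant surgeries on a sweepout surface of intersection type $(2,0,2)$ produce only spheres plus possibly one genus-$g$ remainder, never a torus; Proposition \ref{indexbounds} combined with the multi-parameter Morse index bounds of \cite{KL} forbids a lone cylinder arising with multiplicity one from a two-parameter procedure; the entropy lower bound $F(\Sigma_\infty)>\lambda(\mathbb{S}^1_*\times\mathbb{R})$ rules out a single sphere or single plane; and any remaining superposition of two or more nontrivial smooth components has $F>2$ except for $2H$. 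This forces $\Sigma_\infty=2H$.

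In the second case, where $\mathcal{S}(J)\setminus\{z\text{-axis}\}$ is a circle $C\subset H$ of density $2$, I would descend to the axisymmetric cross-section $\gamma=\Sigma_\infty\cap\{y=0,\,x>0\}$, which is an $F$-stationary integral $1$-varifold for the weight $xe^{-(x^2+z^2)/4}\,ds$, invariant under $z\mapsto -z$, with at worst one interior density-$2$ crossing at $(r_0,0)$. The two smooth branches at the crossing are exchanged by $\tau_H$ (they cannot be individually $\tau_H$-invariant, else they would be tangent to $H$, hence tangent to each other, contradicting transversality). Each branch is therefore the profile of a smooth rotationally symmetric embedded self-shrinker that is \emph{not} $\tau_H$-invariant but passes transversally through $H$ at $C$. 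Invoking the Kleene--M\o ller ODE analysis \cite{KM}, which classifies all smooth complete embedded rotationally symmetric self-shrinkers and shows they are $\tau_H$-invariant (up to a vertical translation incompatible with the position of $C\subset H$), this configuration is impossible. Hence this case is vacuous.

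In both cases $\Sigma_\infty = 2H$, and since $2H$ is smooth we conclude $\mathrm{sing}(\Sigma_\infty)=\emptyset$. The principal difficulty is the second case: excluding the density-$2$ crossing on $C$ requires a rigidity statement for rotationally symmetric self-shrinker profiles passing transversally through $H$, via either the full strength of \cite{KM} beyond Proposition \ref{classification} or a direct ODE / maximum-principle argument at the crossing point. Once that crossing scenario is eliminated, the entropy/genus/index balance in the first case pins down the doubled plane uniquely as the varifold limit.
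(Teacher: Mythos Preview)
Your approach to the crossing case differs genuinely from the paper's. The paper disposes of a possible immersed singular circle by an entropy argument: if $\Sigma_\infty$ were a single immersed self-shrinker with a circle of self-intersection, then $\lim_{t_0\to 0}F_{x_0,t_0}(\Sigma_\infty)=2$ for $x_0$ on that circle, whereas Colding--Minicozzi's Lemma~7.10 says the supremum of the $F$-functionals on a self-shrinker not splitting off a line is attained \emph{uniquely} at $(0,1)$; combined with $\lambda(\Sigma_\infty)\leq 2$ this is a contradiction. You instead separate the two transverse branches at the crossing, observe $\tau_H$ exchanges them, and appeal to the full Kleene--M\o ller classification (stronger than Proposition~\ref{classification}, which only treats the $\mathbb{P}_\infty$-invariant case) to conclude each branch would be a complete embedded rotationally symmetric self-shrinker that is \emph{not} $\tau_H$-invariant---impossible since all four items on the list are. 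This is a valid and arguably more geometric alternative.

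The gap is that your argument requires each branch $\gamma_i$ to generate a \emph{complete} embedded self-shrinker, and for this you must first exclude singular points on the $z$-axis; otherwise $\gamma_1$ could terminate there. The paper handles this by the maximum principle in the profile: near a putative singular point on $Z^+$ the profile consists of two arcs meeting $Z^+$ orthogonally at the same height, and ODE uniqueness for the self-shrinker profile equation forces them to coincide. You allude to this in Case~1 via ``Allard's regularity then promotes this to smoothness,'' but Allard does not apply at a multiplicity-$2$ planar tangent cone; the correct mechanism is the profile ODE uniqueness, and in Case~2 you omit the $z$-axis discussion altogether. Once that is supplied, your Kleene--M\o ller route goes through.

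One minor point: your use of \cite{KL} to exclude the cylinder is misplaced, since the index bound there constrains each $\Sigma_g$, not the varifold limit $\Sigma_\infty$. The entropy inequality $F(\Sigma_\infty)\geq\lambda(\mathbb{S}^1_*\times\mathbb{R})+\varepsilon_{BW}$ from Theorem~\ref{existencefinal} already eliminates the sphere, the cylinder, and the single plane directly, exactly as the paper does.
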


\begin{proof}

Let $\mathcal{F}$ denote the closure of a fundamental domain of the $\mathbb{P}_{\infty}$-action given by the solid quadrant of the plane $P_1$ (containing the $z$-axis and ray $R_1$) with non-negative $x$ and $z$-coordinates.    Let us denote the non-negative $x$-axis by $X^+$ and the non-negative $z$-axis by $Z^+$.   So \begin{equation}\partial \mathcal{F} = Z^+\cup X^+\mbox{ with } X^+\cap Z^+= (0,0,0).\end{equation} 

Let $\pi:\mathbb{R}^3\rightarrow\mathcal{F}$ denote the projection to the fundamental domain.  Assume toward a contradiction that $\mbox{sing}(\Sigma_\infty)$ is non-empty.  Fix $S\in\mbox{sing}(\Sigma_\infty)$ and let $s=\pi(S)$.  Set $\tilde{\Sigma}_\infty:=\pi(\Sigma_\infty)$.  By Proposition \ref{limitclass},  $s\in Z^+\cup X^+$.


First observe that $s\notin Z^+\setminus (0,0,0)$.  Indeed, if not, then locally near such a point, the stationarity of $\Sigma_\infty$ and the fact that $\mathcal{S}_1(J)\setminus\{\mbox{z-axis}\}$ is empty by Proposition \ref{limitclass} implies that $\tilde{\Sigma}_\infty$ near $s$ consists of two otherwise disjoint segments meeting the axis $Z^+$ at angle $\pi/2$ at $s$.  Thus by the maximum principle, the two segments coincide and $s$ would in fact not be in $\mbox{sing}(\Sigma_\infty)$.  Similar reasoning implies $s\neq (0,0,0)$.  Thus we have shown $s\in X^+\setminus (0,0,0)$ and by Proposition \ref{limitclass}, it is the only $s\in\mathcal{F}$ with $\pi^{-1}(s)\subset \mbox{sing}(\Sigma_\infty)$.



It follows that $\Sigma_\infty$ has support a union of (potentially immersed) self-shrinkers.  If there are two or more self-shrinkers in the union, since each has entropy at least $1$, the entropy bound $\lambda(\Sigma_\infty)\leq 2$ implies that $\Sigma_\infty$ is a union of two planes, which is impossible unless the planes coincide with $H$. Thus $\Sigma_\infty$ is the varifold given by a single immersed self-shrinker with multiplicity $1$.  Since $\Sigma_\infty$ has an immersed circle $S$, by considering the $F_{x_0,t_0}$-functionals for $t_0\to 0$ and $x_0\in S$ we get $\lambda(\Sigma_\infty) = 2$.  However, the supremum of the $F_{x_0,t_0}$ functionals on a self-shrinker (that does not split off a line) is attained uniquely at $(1,0)$ and not in the limit that $t_0\to 0$ (Lemma 7.10 in \cite{CM}).  This contradiction establishes that $\mbox{sing}(\Sigma_\infty)$ is empty.

Again by the classification of rotationally symmetric shrinkers (Proposition \ref{classification}) and entropy bound we obtain that $\Sigma_\infty$ is either equal to $2H$, $\mathbb{S}^2_*$, $\mathbb{S}^1_*\times\mathbb{R}$, or the Angenent torus.  Note
\begin{equation}
\lambda(\mathbb{S}_*^1\times\mathbb{R})+\varepsilon_{BW}\leq \lim_{g\to\infty}\lambda(\Sigma_g)=\lambda(\Sigma_\infty)\leq 2.
\end{equation}

Thus all options for $\Sigma_\infty$ are excluded except for the Angenent torus and $2H$. If $\Sigma_\infty$ is the Angenent torus, then $\Sigma_g$ would be diffeomorphic to a torus by Allard's theorem \cite{All} for large $g$,  which is not possible by Proposition \ref{surgeries}.  This completes the proof.
 \end{proof}

Finally let us show
\begin{prop}\label{decomp}
For the subsequence $I$ and further subsequence $J\subset I$ there exists a circle $C(0,r)\subset H$ so that the convergence\begin{equation}\Sigma_{J}\rightarrow 2H\mbox{  
 as varifolds}\end{equation} is smooth and graphical on compact subsets of $H\setminus C(0,r)$.  
Moreover, for any $R>r$ and $g$ far enough along in the subsequence $J$ the set \begin{equation}\Sigma_g\cap H\cap B_R(0)\end{equation} is a union of $g+1$ embedded circles.  
\end{prop}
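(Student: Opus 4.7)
The plan is to combine the singular-set classification of Proposition \ref{limitclass} with standard multiplicity-$2$ regularity and a genus-accounting argument. First I would pass to the subsequence $J\subset I$ furnished by Proposition \ref{limitclass}, so that $\mathcal{S}(J)\setminus\{\text{z-axis}\}$ is either empty or a single circle $C(0,r_0)\subset H$ of density $2$. Since $\Sigma_\infty=2H$, at any $p\in H\setminus\mathcal{S}(J)$ the convergence $\Sigma_J\to 2H$ is smooth with multiplicity $2$ in a neighborhood of $p$, so $\Sigma_g$ decomposes locally into two smooth components converging smoothly to $H$. The $\tau_H$-invariance of $\Sigma_g$ forces one component to lie in $\{z>0\}$ and the other to be its reflection in $\{z<0\}$; once the components are close in $C^1$ to $H$, each is expressible as a normal graph.

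Next I would rule out additional singular behavior on $H$ and show that the singular circle is nondegenerate. The only remaining possibility not covered by Proposition \ref{limitclass} is the origin, since it lies on the z-axis. If the origin were in $\mathcal{S}(J)$, then by a variant of the genus-Hurwitz count from Proposition \ref{limitclass}, modified to account for the non-free $\mathbb{Z}_{g+1}$-action fixing the origin, the local genus of $\Sigma_g$ in a small ball $B_\epsilon(0)$ would have to be comparable to $g$; combined with the same $g$-genus contribution concentrating near $C(0,r_0)$ (if $r_0>0$), this exceeds the total genus $g$ of $\Sigma_g$ established in Proposition \ref{genustheorem}, a contradiction. To exclude the degenerate possibility $r_0=0$ (i.e.\ the $g+1$ necks all collapsing to the origin), I would invoke the appendix's analysis of positive radial solutions of the linearized shrinker operator on $H$ --- a confluent hypergeometric (Kummer) equation --- which precludes the ``profile function'' given by the difference of the two sheets from having all its zeros accumulating at the origin after blow-up. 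This forces $r_0>0$, and I set $r:=r_0$.

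For the second part of the proposition, the graphical decomposition from the first paragraph implies that for any $R>r$ and $\epsilon>0$ small, the set $\Sigma_g\cap H\cap (B_R(0)\setminus N(C(0,r),\epsilon))$ is empty once $g$ is large enough, since the two sheets are disjoint normal graphs avoiding $H$ on this compact region. Inside $N(C(0,r),\epsilon)$, the only way to glue the two sheets into a connected $\mathbb{P}_{g+1}$-equivariant surface of genus $g$ is via exactly $g+1$ symmetrically placed necks (fewer necks give too little genus, more necks give too much, as counted by Riemann--Hurwitz applied to the $\mathbb{Z}_{g+1}$-quotient exactly as in Proposition \ref{limitclass}). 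Each neck is $\tau_H$-invariant and meets $H$ orthogonally in a single embedded circle, and together these constitute $\Sigma_g\cap H\cap B_R(0)$. The main obstacle will be the second paragraph: ruling out the origin as a singular point and the degenerate case $r=0$, for which the Kummer analysis of the appendix is essential, since a naive Riemann--Hurwitz count near origin is obstructed by the fixed locus of the $\mathbb{Z}_{g+1}$-action.
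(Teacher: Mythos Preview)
Your ingredients are right --- Proposition \ref{limitclass} together with Proposition \ref{nofunction} --- but the paper organizes them more simply, and your route through the origin case introduces an obstacle that is both unnecessary and unresolved.

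The paper does \emph{not} first invoke Proposition \ref{limitclass} and then fight the degenerate cases. Instead it argues directly: suppose the convergence were smooth on all of $H\setminus\{0\}$ (this single hypothesis covers simultaneously your ``$r_0=0$'', your ``origin alone in $\mathcal{S}(J)$'', and the case you omit entirely --- genus escaping to infinity with $\mathcal{S}(J)$ genuinely empty). Then on any annulus $B_{R_i}\setminus B_{\epsilon_i}$ the surface is two graphs $w_g^1=-w_g^2$; normalizing $u_g=w_g^1/w_g^1(p)$ and passing to a diagonal limit produces a positive radial solution of $L_H\phi=0$ on $H\setminus\{0\}$, which Proposition \ref{nofunction} forbids. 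This one step replaces both of your second-paragraph arguments. Only afterward does the paper invoke Proposition \ref{limitclass} to pin down the (now necessarily non-degenerate) circle.

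Your proposed genus double-count to exclude the origin when $r_0>0$ is, as you yourself flag, obstructed by the non-free $\mathbb{Z}_{g+1}$-action there; the paper never attempts a Riemann--Hurwitz argument at the origin. If one still worried about the origin after locating the circle, the point is that the proof of Proposition \ref{limitclass} already shows the full genus $g$ concentrates at the circle, so near the origin the genus is zero and Ilmanen's integrated Gauss--Bonnet plus the increasing equivariance forces smooth convergence --- but the paper does not spell this out, relying instead on the Jacobi-field contradiction above.

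For the $g+1$ circles, your heuristic is correct but the paper's execution is more precise: it uses the facts established inside the proof of Proposition \ref{limitclass} --- that $\Sigma_g\cap N(\epsilon)$ has four longitudinal boundary circles and genus zero in each $\mathbb{Z}_{g+1}$-fundamental domain --- to compute in the quotient $\chi(\Gamma_g)=2\chi(\Gamma_g^+)=-2e$, forcing exactly $e=1$ intersection circle downstairs, hence $g+1$ upstairs. Your assertion that ``each neck meets $H$ in a single circle'' needs exactly this Euler-characteristic computation to be justified.
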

\begin{proof}
Suppose $\mathcal{S}(I)$ is empty.  In this case,  for any $\epsilon_i >0$ and $ R_i>0$ denote $\Omega_i =  B_{R_i}(0)\setminus B_{\epsilon_i}(0)$.  Then $\Sigma_g\to 2H$ smoothly on $H\cap\Omega_i$ for any subsequence.  Consider a sequence $R_i\to \infty$ and $ \epsilon_i\to 0$.  Then for each $i$ we get a subsequence of $g$ so that $\Sigma_g\cap\Omega_i$ may be written as a union of two normal graphs $w^1_g(x)$ and $w^2_g(x)=-w^1_g(x)$.  Set \begin{equation}u_{g}(x) = w^1_g(x)/w^1_g(p)\end{equation} for some choice of $p$ in all $\Omega_i$.  Passing to a diagonal subsequence (cf. Appendix in \cite{CM}) we obtain a non-negative rotationally symmetric solution $\phi:H\setminus (0,0)\rightarrow\mathbb{R}$ to the Jacobi equation \begin{equation}\label{jacobi}L_H \phi = 0.\end{equation}  By Proposition \ref{nofunction},  there is no such function \footnote{Alternatively, one could argue that any singularity of such a Jacobi function has to be removable \cite{CM},  and any smooth positive solution to \eqref{jacobi} on $H$ would force $H$ to be stable,  which it is not \cite{CM2}.}. Thus $\mathcal{S}(I)$ is not empty and by Proposition \ref{limitclass} we may pass to a further subsequence $J$ for which $\mathcal{S}(J)$ contains exactly one point, as desired.

Finally choose $R$ greater than the radius of the single circle contained in $\mathcal{S}(J)$. We will show that for $g$ large, $\Sigma_g\cap H\cap B_R(0)$ is a union of $g+1$ circles. This follows from symmetry considerations together with the structure obtained in Proposition \ref{limitclass}.  Indeed, by Proposition \ref{limitclass} and the smooth convergence of $\Sigma_g$ to $2H$ away from a single circle $C$, we obtain (letting $N(\epsilon)$ denote the $\epsilon$-tubular neighborhood about $C$) that 
\begin{enumerate}
    \item $\Sigma_g\cap N(\epsilon)$ is a connected surface with genus zero in each fundamental domain of the $\mathbb{Z}_{g+1}\subset\mathbb{P}_{g+1}$ action on $\mathbb{R}^3$ 
    \item $\Sigma_g\cap \partial N(\epsilon)$ consists of four longitudinal circles, two of which are contained in $\{z>0\}$ and two of which are contained in $\{z<0\}$. 
\end{enumerate}  

Let us denote \begin{equation}\Sigma_g^+=\Sigma_g\cap \{z\geq 0\}\cap B_R(0)\end{equation} and $\Sigma_g^-=\tau_H(\Sigma_g^+)$.  Consider the the quotients $\Gamma_g=(\Sigma_g\cap N(\epsilon))/\mathbb{Z}_{g+1}$ together with $\Gamma_g^\pm = \Sigma_g^\pm /\mathbb{Z}_{g+1}$.  Then $\Gamma_g$ has $4$ boundary curves and genus $0$ and thus Euler characteristic $-2$.  

Let $e$ denote the number of circles of intersection of $\Gamma_g^\pm$ with the plane $H$.  Then by the additive properties of the Euler characteristic we obtain
\begin{equation}\label{eulerdouble}
\chi(\Gamma_g) = 2\chi(\Gamma_g^+) = 2(2-(2+e)) = -2e.
\end{equation}

Since $\chi(\Gamma_g) = -2$, we get from \eqref{eulerdouble} that $e=1$.  Note that $e$ cannot be a longitudinal curve as the number of such curves is even.  Thus the single curve in $\Gamma_g\cap H$ lifts to $g+1$ distinct simple curves in $\Sigma_g\cap H\cap B_R(0)$.  Note finally that by the smooth convergence of $\Sigma_g$ to $2H$ outside of $N(\epsilon)$, these are the only curves contained in $\Sigma_g\cap H\cap B_R(0)$ for large $g$.   This completes the proof. 
\end{proof}

Note that we also have established the following useful fact:

\begin{prop}\label{bigL}
There exists $L>0$ so that for all $g$ sufficiently large,  then $\Sigma_g\cap B_L(0)$ has genus $g$.
\end{prop}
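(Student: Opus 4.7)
The plan is to choose $L$ larger than the radius $r$ of the ``non-smoothness'' circle $C = C(0,r)$ identified in Proposition \ref{decomp}, and to decompose $\Sigma_g \cap B_L(0)$ into a small tubular neighborhood $N(\epsilon)$ of $C$ (which by Proposition \ref{limitclass} already carries all the genus) together with a graphical outer region that is topologically trivial. An additivity-of-Euler-characteristic computation then forces the genus of $\Sigma_g \cap B_L(0)$ to equal $g$.

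Fix a small $\epsilon > 0$ as in Propositions \ref{limitclass} and \ref{decomp}, pass to a subsequence along which those propositions hold, and fix $L > r + \epsilon$. For $g$ sufficiently large along the subsequence, the smooth graphical convergence $\Sigma_g \to 2H$ on compact subsets of $H \setminus C$ forces $\Sigma_g$ to meet $\partial B_L(0)$ transversally in exactly two circles near the equator, while $\Sigma_g$ meets $\partial N(\epsilon)$ transversally in exactly four longitudinal circles (as observed in the proof of Proposition \ref{decomp}). Decompose
\[
\Sigma_g \cap B_L(0) = A_g \cup B_g, \qquad A_g := \Sigma_g \cap N(\epsilon), \quad B_g := \Sigma_g \cap (B_L(0) \setminus N(\epsilon)).
\]
By the claim \eqref{isg} in the proof of Proposition \ref{limitclass}, $A_g$ is a connected surface of genus $g$ whose boundary consists of the four circles above. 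By the smooth graphical convergence outside $N(\epsilon)$, $B_g$ is diffeomorphic to $2H \cap (B_L(0) \setminus N(\epsilon))$: two disks (the sub-disks inside $C$, one per sheet of $2H$) and two annuli (the regions between $C$ and $\partial B_L(0)$, one per sheet). Thus $B_g$ consists of four genus-zero components with six boundary circles in total — four on $\partial N(\epsilon)$ and two on $\partial B_L(0)$.

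Gluing $A_g$ to $B_g$ along the four common boundary circles, every component of $B_g$ attaches to $A_g$, so $\Sigma_g \cap B_L(0)$ is connected with two boundary circles on $\partial B_L(0)$. Additivity of Euler characteristics gives $\chi(A_g) = 2 - 2g - 4 = -2g - 2$ and $\chi(B_g) = 2 \cdot 1 + 2 \cdot 0 = 2$, hence $\chi(\Sigma_g \cap B_L(0)) = -2g$. Applying $\chi = 2 - 2h - 2$ for a connected surface with two boundary circles yields $h = g$.

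To promote this from a subsequence statement to one holding for all sufficiently large $g$, argue by contradiction: a failure of uniformity would produce $g_i \to \infty$ and $L_i \to \infty$ with $\Sigma_{g_i} \cap B_{L_i}(0)$ not of genus $g_i$; applying Proposition \ref{decomp} to this sequence yields a further subsequence, a finite-radius singular circle $C$, and a small $\epsilon$ for which the preceding argument applies once $L_i > r + \epsilon$, a contradiction. The main obstacle is the component-and-boundary bookkeeping underlying the Euler characteristic computation — in particular, ensuring that $B_g$ has precisely two disk components and two annulus components and that the gluing yields a connected result — which rests on the structural information obtained in Propositions \ref{limitclass} and \ref{decomp}.
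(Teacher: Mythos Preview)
Your argument is correct and takes a genuinely different route from the paper's. The paper argues directly by contradiction: if the conclusion fails, then (invoking Proposition~\ref{surgeries} for the genus dichotomy) one extracts subsequences with genus zero in balls of ever larger radii, hence smooth convergence on all of $H\setminus\{0\}$ by Ilmanen's integrated Gauss--Bonnet argument; a diagonal argument then manufactures a positive rotationally symmetric Jacobi field on $H\setminus\{0\}$, contradicting Proposition~\ref{nofunction}. In contrast, you bypass the direct Jacobi-field step by leveraging the full structural output of Propositions~\ref{limitclass} and~\ref{decomp} (which themselves rely on \ref{nofunction}): once the genus is known to sit in $N(\epsilon)$ and the complement is graphical with the prescribed boundary pattern, the Euler-characteristic bookkeeping is forced. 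Your approach is more ``assembled'' from what has already been established, while the paper's is self-contained and shorter because it simply re-runs the mechanism that drove Proposition~\ref{decomp}. Two minor remarks: the connectedness of $A_g$ and the count of exactly four longitudinal boundary circles come from items~(1) and~(2) in the proof of Proposition~\ref{decomp}, not from \eqref{isg} alone, so the attribution could be sharpened; and ``$2H\cap(B_L(0)\setminus N(\epsilon))$'' should be read as two copies of $H\cap(B_L(0)\setminus N(\epsilon))$, since $2H$ is a varifold rather than a set.
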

\begin{proof}
Suppose not.  Then by Proposition \ref{surgeries}, for each $L_k>0$ we get a subsequence of $g$ so that $B_{L_k}(0)\cap\Sigma_g$ has genus $0$.  Thus by Ilmanen's integrated Gauss-Bonnet argument and the increasing equivariance (as in the proof of Proposition \ref{limitclass}, we get that for this subsequence $\Sigma_g\to 2H$ smoothly in $B_{L_k/2}\setminus (0,0,0)$.  Taking $L_k\to\infty$ and a diagonal argument gives a smooth positive Jacobi field on $H\setminus (0,0)$, which is impossible by Proposition \ref{nofunction}.
\end{proof}




\section{Ends of $\Sigma_g$}\label{endssection}
In this section we prove Theorem \ref{main}\ref{5} which we restate
\begin{prop}
For $g$ large,  the self-shrinker $\Sigma_g$ has two graphical asymptotically conical ends $E_g^+$ and $E^-=\tau_H(E_g^+)$ with $E_g^+\subset\{z>0\}$.  The links $\mathcal{L}(E_g^{\pm})$ each converge to the equator $\mathbb{S}^2\cap\{z=0\}$ in the $\mathcal{C}^0$-topology.
\end{prop}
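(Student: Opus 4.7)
The plan is to combine three ingredients already in hand: the smooth convergence of $\Sigma_g$ to $2H$ on compact subsets of $H\setminus C(0,r)$ from Proposition \ref{decomp}, the topological concentration $\mathrm{genus}(\Sigma_g\cap B_L)=g$ from Proposition \ref{bigL}, and the entropy bound $\lambda(\Sigma_g)<2$. First I would isolate the two ends at a finite scale, then apply Brakke regularity separately to each, and finally invoke Sun-Wang--type compactness to identify the asymptotic conical structure and read off the $C^0$ convergence of the links.

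First, I would fix $L$ as in Proposition \ref{bigL} and choose $R>L$ so large that, for all $g$ large, $\Sigma_g\cap H$ is contained in $B_R(0)$ (exactly $g+1$ circles near $C(0,r)$ by Proposition \ref{decomp}) and $\Sigma_g\cap(B_{2R}(0)\setminus B_R(0))$ consists of two $\tau_H$-symmetric smooth normal graphs over $H\setminus B_R(0)$ whose $C^\infty$ norms tend to zero. I would then define $E_g^+$ as the connected component of $\Sigma_g\cap\{z>0\}\cap(\mathbb{R}^3\setminus B_R(0))$ containing the positive graph, with $E_g^-=\tau_H(E_g^+)$. Since $\Sigma_g$ does not meet $H$ outside $B_R(0)$, the surface $\Sigma_g\cap(\mathbb{R}^3\setminus\overline{B_R(0)})$ splits into disjoint pieces in $\{z>0\}$ and $\{z<0\}$; the $\mathbb{D}_{g+1}$-invariance combined with the smooth-limit data and connectedness of $\Sigma_g$ should force the upper piece to be connected, hence equal to $E_g^+$, ruling out additional bounded or exotic components.

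Second, I would show $E_g^+$ is graphical over $H$ to infinity and asymptotically conical. At any sequence $p_g\in E_g^+$ with $|p_g|\to\infty$, tangent varifold limits have entropy bounded by $\lambda(\Sigma_g)<2$; combined with the genus-zero structure outside $B_L$ from Proposition \ref{bigL} and the Bernstein--Wang entropy gap, such limits are forced to be multiplicity-one planes. Brakke's $\varepsilon$-regularity then yields smooth convergence to a single sheet at all scales at infinity, precluding folds, vertical tangents, or revisits to $H$. Invoking the compactness and asymptotic-structure results for low-entropy self-shrinkers in the spirit of Sun--Wang \cite{SW} (together with the asymptotic-cone theorem for self-shrinkers of polynomial volume growth), $E_g^+$ acquires a unique smooth asymptotic cone $\mathcal{C}_g^+$ to which it converges smoothly outside a compact set, and it is graphical over $H\setminus K$ for some compact $K$. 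The $\tau_H$- and $\mathbb{D}_{g+1}$-symmetries force $\mathcal{C}_g^+$ to be a regular rotationally-approximately-symmetric graphical cone close to $H$.

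Third, for the $C^0$ convergence of the links, I would pass to the varifold limit $g\to\infty$. Since $\Sigma_g\to 2H$ as varifolds by \eqref{limit} and each $E_g^+$ is asymptotic to $\mathcal{C}_g^+$, the link $\mathcal{L}(E_g^+)=\mathcal{C}_g^+\cap\mathbb{S}^2$ must degenerate to the equator $\mathbb{S}^2\cap H$: any subsequential $C^0$ limit of the links staying away from the equator would produce a varifold limit of $\Sigma_g$ with a non-trivial cone component at infinity, contradicting $\Sigma_g\to 2H$. The main obstacle I anticipate is the second step, where the pointwise smooth convergence on compact sets of $H\setminus C(0,r)$ must be upgraded to a global graphical, asymptotically conical description of each end; ruling out points of multiplicity at least two at arbitrarily large scales and producing a well-defined asymptotic cone is precisely where the genus-zero exterior structure from Proposition \ref{bigL} and the Sun--Wang compactness under bounded entropy and genus become essential, since together they prevent bubbling and higher-multiplicity phenomena at infinity.
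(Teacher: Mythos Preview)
Your outline has the right ingredients and is close in spirit to the paper, but there is a genuine gap in the first step that propagates through the rest of the argument.

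You assert that one can fix a single radius $R$ with $\Sigma_g\cap H\subset B_R(0)$ for all large $g$, citing Proposition~\ref{decomp}. That proposition, however, only controls $\Sigma_g$ on \emph{compact} subsets of $H\setminus C(0,r)$: for any fixed annulus $B_{2R}\setminus B_R$ and $g$ large (depending on $R$), $\Sigma_g$ is two graphs there, but this says nothing about $\Sigma_g\cap H$ outside $B_{2R}$ for that same $g$. A priori the surfaces could have ends weaving back and forth across $H$ at larger and larger radii. The paper handles this separately (Proposition~\ref{shown}) by going back to the min-max approximating sequence $\Gamma_i\to\Sigma_g$, performing equivariant neckpinch surgeries to cap off any end-circles meeting $H$, and then deriving a contradiction from the intersection data of the resulting closed surface via the Riemann--Hurwitz count \eqref{rh}. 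This topological input from the sweepout is not visible in your proposal and is what actually forces the ends off of $H$.

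Once the ends are known to lie in $\{z>0\}$ and $\{z<0\}$, the paper's Brakke step is more specific than what you sketch: one passes to the rescaled flow $\overline{\Sigma}_g(t)=\sqrt{-t}\,\Sigma_g$ near a putative limit direction $q\in\mathbb{S}^2\cap H$, and applies Brakke regularity \emph{only to the $\{z>0\}$ component} $\hat{\mu}^g_t$. Because the sheets have been separated, the limiting flow has Gaussian density exactly $1$ at $(q,0)$ (rather than $2$), so the convergence is smooth in a parabolic neighborhood, contradicting a vertical tangent at $p_g$. Your version, which appeals to ``tangent varifold limits forced to be multiplicity-one planes'' via the entropy bound $\lambda(\Sigma_g)<2$, does not by itself rule out multiplicity two at the scale of $|p_g|$; the separation of sheets is precisely what buys multiplicity one, and it rests on the missing first step. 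Similarly, the uniqueness of the ``large'' end on each side (your claim that the upper piece is connected) is obtained in the paper via the entropy inequality $\sum_i F_{0,1}(\mathcal{C}^i_g)\le 1$ applied to homotopically nontrivial links (Lemma~\ref{homologous}), not from $\mathbb{D}_{g+1}$-invariance alone.
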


Recall that the \emph{asymptotic cone} $\mathcal{C}(\Gamma)$ of a self-shrinker $\Gamma\subset\mathbb{R}^3$ is defined to be
\begin{equation}
\mathcal{C}(\Gamma) = \lim_{\tau \to 0^+} \tau\Gamma, 
\end{equation}
and its \emph{link} is given by 
\begin{equation}
\mathcal{L}(\Gamma) = \mathcal{C}(\Gamma)\cap\mathbb{S}^2.
\end{equation}

A circle contained in $\mathcal{L}(\Gamma)$ corresponds to a conical end,  and a point corresponds to a cylindrical end.   

L. Wang \cite{LW} (extended by Sun-Wang \cite{SW}) has shown that for any self-shrinker of finite genus, $\mathcal{L}(\Gamma)$ consists of finitely many disjoint simple closed curves and points.    For $R$ large and transverse to $\Sigma_g$, the surface $\Sigma\setminus B_R(0)$ thus consists of self-shrinking (conical or cylindrical) ends $\Gamma_1,...,\Gamma_k$,  where each $\Gamma_i$ is diffeomorphic to $\mathbb{S}^1\times[0,\infty)$.

Using the decomposition provided by Proposition \ref{decomp} together with topological considerations based on the manner in which the sweepout surfaces are set up we first show that the ends of $\Sigma_g$ are disjoint from the plane $H$:
\begin{prop}\label{shown}
There exists $R>0$ so that for $g$ large enough,  
\begin{equation}
\Sigma_{g}\setminus B_{R}(0) \mbox { consists of two genus zero components, } E_g^+\mbox{ and }  E_g^-,  
\end{equation}
where $\tau_H(E_g^+)  = E_g^-$ and $E_g^+\subset\{z>0\}$.   Each 
$\partial E_g^+$ and $\partial E_g^-$ consists of one circle in $\partial B_R(0)$.
\end{prop}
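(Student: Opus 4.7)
The plan is to choose a single radius $R$ that works uniformly for all large $g$, using Proposition~\ref{bigL} to trap the entire genus inside $B_R(0)$ and Proposition~\ref{decomp} to trap $\Sigma_g\cap H$ inside $B_R(0)$. Once this is done, the reflection $\tau_H$ automatically gives the two-piece decomposition; the remaining work is to prove that each piece is connected with a single boundary circle, which will be carried out by applying Brakke's regularity theorem to the upper half of $\Sigma_g$ together with the Wang / Sun--Wang end classification.

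First I would fix $R$ satisfying three conditions simultaneously for all $g$ sufficiently large: (i) $R>L$, where $L$ is provided by Proposition~\ref{bigL}, so that $\Sigma_g\cap B_R(0)$ carries the full genus $g$ and hence $\Sigma_g\setminus B_R(0)$ is planar; (ii) $\Sigma_g\cap H\subset B_R(0)$ uniformly in large $g$; (iii) $R$ is a regular value of the distance function from the origin restricted to $\Sigma_g$ (by Sard). For (ii), I would apply the smooth graphical convergence $\Sigma_g\to 2H$ on compact annuli in $H\setminus C(0,r)$ from Proposition~\ref{decomp}: on any fixed annulus in $H$ far from $C(0,r)$, $\Sigma_g$ is a disjoint double normal graph for $g$ large and hence disjoint from $H$ there; combined with the $(g+1)$-circle description of $\Sigma_g\cap H\cap B_R(0)$ from Proposition~\ref{decomp}, these $g+1$ circles are the \emph{only} intersections with $H$ for $g$ large. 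With such $R$ in hand, since $\tau_H\in\mathbb{P}_{g+1}$ preserves $\Sigma_g$ and swaps $\{z>0\}$ with $\{z<0\}$, setting $E_g^+:=(\Sigma_g\setminus B_R(0))\cap\{z>0\}$ and $E_g^-:=\tau_H(E_g^+)$ yields the desired two-piece decomposition, each piece planar (genus zero) and each boundary lying in $\partial B_R(0)$.

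To show each $E_g^\pm$ has a single boundary circle, I would view $\Sigma_g^+:=\Sigma_g\cap\overline{\{z\geq 0\}}$ as a minimal surface in Gaussian space with boundary the $g+1$ circles in $H\cap B_R(0)$. The $\tau_H$-symmetry splits the convergence $\Sigma_g\to 2H$ into two multiplicity-one halves, so $\Sigma_g^+\to H$ as varifolds on compact subsets of $H\setminus C(0,r)$, and by Proposition~\ref{decomp} this is smooth. Brakke's $\varepsilon$-regularity theorem (in the Gaussian metric) upgrades this to a small-normal-graph description of $\Sigma_g^+$ over any compact subset of $H\setminus C(0,r)$, so $\Sigma_g^+\cap\partial B_R(0)$ contains exactly one circle near the equator of $\partial B_R(0)$. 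To rule out additional boundary circles of $E_g^+$ (equivalently, additional ends of $\Sigma_g^+$ in $\{z>0\}$), I would invoke the classification of ends of finite-genus self-shrinkers due to L.\ Wang and Sun--Wang, which asserts that each end is asymptotically conical or cylindrical, diffeomorphic to $\mathbb{S}^1\times[0,\infty)$. Combining this with the $\mathbb{Z}_{g+1}$-rotational invariance forcing links to be rotationally symmetric (horizontal circles or poles) for $g$ large, the strict entropy bound $\lambda(\Sigma_g)<2$, and Kleene--M\o ller's classification of $\mathbb{P}_\infty$-invariant self-shrinkers, one excludes the existence of a second end in $\{z>0\}$, so $E_g^+$ has a single boundary circle on $\partial B_R(0)$ and $E_g^-=\tau_H(E_g^+)$ is its mirror image.

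The main obstacle is the last step: converting the graphical convergence of $\Sigma_g^+$ to $H$ (which is only smooth on compact subsets of $H\setminus C(0,r)$) into a \emph{global} statement controlling the number of ends of $\Sigma_g^+$. This requires simultaneously leveraging Brakke regularity near the plane $H$, the asymptotic end classification at infinity, the symmetry reduction of links to rotationally invariant sets, and the entropy bound to preclude a second asymptotically conical end whose link is a non-equatorial horizontal circle $\{z=c\}$. Once this is closed out, $E_g^+$ is a single annular end with exactly one boundary circle on $\partial B_R(0)$, completing the proof.
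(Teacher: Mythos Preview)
Your step (ii) is the crux of the proposition, and your argument for it has a gap. Proposition~\ref{decomp} only yields smooth two-sheeted graphical convergence of $\Sigma_g$ to $2H$ on \emph{compact} subsets of $H\setminus C(0,r)$; for any fixed annulus $K$ this gives $\Sigma_g\cap H\cap K=\emptyset$ only for $g\geq g_0(K)$, and the $(g+1)$-circle statement in Proposition~\ref{decomp} is expressly about $\Sigma_g\cap H\cap B_R(0)$. Nothing here rules out $\Sigma_g$ meeting $H$ at radii drifting to infinity as $g\to\infty$, which is exactly what must be excluded. Your proposed repair via Brakke regularity, the Sun--Wang end classification, entropy, and Kleene--M\o ller is essentially the machinery of Lemmas~\ref{hausdorff} and~\ref{homologous} and the final proposition of Section~\ref{endssection}---but those results \emph{use} Proposition~\ref{shown} as input, so invoking them here is circular. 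Even setting circularity aside, a $\mathbb{Z}_{g+1}$-invariant conical end whose link is near the equator has cone with $F_{0,1}$ close to $1$, so the bound $\lambda(\Sigma_g)<2$ alone does not forbid such an end from straddling $H$; and Kleene--M\o ller applies to $\mathbb{P}_\infty$-invariant shrinkers, not to $\Sigma_g$ itself.

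The paper's argument is of a different nature and supplies the missing idea. It returns to the closed min-max approximating sequence $\Gamma_i\to\Sigma_g$, performs equivariant neckpinch surgeries near $\partial B_P(0)$ along any end circles $C_i$ that meet $H$ to obtain a closed surface $\Gamma_i''$, and then runs an Euler-characteristic count on $(\Gamma_i''\setminus B_R(0))\cap\{z\geq 0\}$ to show $\Gamma_i''\cap H$ is a single $\mathbb{Z}_{g+1}$-invariant curve $W_1$. Since $W_1$ then meets both rays $R_1$ and $R_2$, the Riemann--Hurwitz formula~\eqref{rh} forces $\mathrm{genus}(\Gamma_i'')\geq g+1$, contradicting that surgery cannot raise genus. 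The essential input you are missing is this topological leverage from the sweepout/surgery structure, which lets the paper bypass any global analytic control of $\Sigma_g\setminus B_R(0)$ at this stage.
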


\begin{proof}
Observe that for $R>L$ (from Proposition \ref{decomp}) we get that \begin{equation}genus(\Sigma_g\cap B_R(0))=g\end{equation} and from the graphical convergence of $\Sigma_g$ to $2H$,  it follows that for $g$ large enough  the set $\partial (\Sigma_g\cap B_R(0))$ consists of two simple closed curves $A^+_g\subset\{z>0\}$ and $A^-_g=\tau_H(A^+_g)\subset\{z<0\}$.

We show that no component $E$ of $\Sigma_g\setminus B_R$ contains a point of $H$.  To that end, first fix $g$ so that the statements of the previous paragraph hold.  Then consider the min-max sequence $\{\Gamma_i\}_{i=1}^\infty$ converging in the sense of varifolds to $\Sigma_g$.  Thus $\Gamma_i\cap B_R(0)$ has genus $g$.  Let $P>R$ be so large so that by Proposition \ref{decomp} $\Sigma_g\cap \partial B_P(0)$ consists of those circles $C_1,...C_k$ which bound ends diffeomorphic to $\mathbb{S}^1\times[0,\infty)$ on $\Sigma_g$.  By reflective symmetry, we have $\tau_H(C_i)= C_i$ for any $i$ with $C_i\cap H\neq\emptyset$.  Recall from Proposition \ref{genustheorem} that we may perform finitely many neckpinch surgeries on $\Gamma_i$ in $B_{P+1}(0)\cap T_\epsilon(\Sigma_j)$ together with isotopies to arrive at a surface $\Gamma_i'$ isotopic (and parallel) to $\Sigma_g$ in $B_P(0)\cap T_\epsilon(\Sigma_g)$. 

We then perform neckpinch surgeries along the subcollection \begin{equation}\{C_{i_1},...,C_{i_l}\}\subset \Gamma_i'\cap B_P(0)\end{equation} which intersect $H$ by adding in the disks in $\partial B_P(0)$ orthogonal to $H$.  This gives a closed connected surface $\Gamma_i''$ with some boundary circles $W_1,...,W_v$ in $H$.  Moreover, $genus(\Gamma_i''\cap B_R(0)) = g$ and thus 
\begin{equation}\label{genusaway} genus(\Gamma_i''\setminus B_R(0)) = 0.\end{equation}

In fact, $v=1$.  Otherwise, the surface $(\Gamma_i''\setminus B_R(0))\cap \{z\geq 0\}$ would be a planar domain with at least two boundary circles in $H$ (and one at $\partial B_R(0)$), which implies in light of \eqref{genusaway}:
\begin{equation}
\chi(\Gamma_i''\setminus B_R(0))= 2\chi(\Gamma_i''\setminus B_R(0))\cap \{z\geq 0\})<0.
\end{equation}
But $\Gamma_i''\setminus B_R(0)$ has genus $0$ by \eqref{genusaway} and two boundary components and thus
\begin{equation}\chi(\Gamma_i''\setminus B_R(0))=0.\end{equation}
This gives a contradiction and thus $v=1$.  Since $v=1$, by $\mathbb{P}_{g+1}$-equivariance, the curve $W_1$ is preserved by the group $\mathbb{Z}_{g+1}$ of rotations about the $z$-axis.  Therefore the curve $W_1$ intersects the rays $R_1$ and $R_2$ (at least) once and the surface $\Gamma_i''$ has intersection type with $b=1$ and $k\geq 4$ in \eqref{rh}, implying that it has genus at least $g+1$. This is a contradiction as $\Gamma_i''$ is obtained from $\Gamma_i$ after surgeries which can only decrease the genus.  
\end{proof}

In light of Proposition \ref{shown}, each asymptotic cone $\mathcal{C}(\Sigma_g)$ may be partitioned 
\begin{equation}
\mathcal{C}(\Sigma_g) = \mathcal{C}^+(\Sigma_g)\cup \mathcal{C}^{-}(\Sigma_g), 
\end{equation}
where $\mathcal{C}^+(\Sigma_g) = \mathcal{C}_g^1\cup ...\cup \mathcal{C}_g^{N_g}$ and each $\mathcal{C}_g^i$ is regular a cone (or ray) all of whose support aside from the tip are contained in $\{z>0\}$ and where
$\mathcal{C}^-(\Sigma_g)= \mathcal{D}_g^1\cup ...\cup\mathcal{D}_g^{N_g}$ with $\mathcal{D}_g^i = \tau_H(\mathcal{C}_g^i)$ for each $i=1,...,N_g$.  

We have the following (using the argument of Lemma 4.1 in \cite{SW}): 

\begin{lemma}[Asymptotic Cones]\label{hausdorff}
For large $g$, the set $\mathcal{C}^+(\Sigma_g)$ is non-empty and in particular $\Sigma_g$ is non-compact.  The link of $\mathcal{C}^+(\Sigma_g)$ converges in the Hausdorff topology to a subset of $\mathbb{S}^2\cap\{z=0\}$ as $g\to\infty$.  
\end{lemma}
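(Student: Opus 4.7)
The first assertion is a direct consequence of Proposition \ref{shown}: for $g$ large, the set $\Sigma_g\setminus B_R(0)$ contains the non-compact component $E_g^+\subset\{z>0\}$, so $\Sigma_g$ is non-compact, and by the asymptotic classification of ends of finite-entropy, finite-genus self-shrinkers due to L.~Wang \cite{LW} and Sun--Wang \cite{SW}, $E_g^+$ is asymptotic to a regular (possibly cylindrical) cone whose link lies in $\mathbb{S}^2\cap\{z\ge 0\}$; hence $\mathcal{C}^+(\Sigma_g)\neq\emptyset$.

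To obtain the Hausdorff convergence of the link to a subset of the equator, I argue by contradiction along the lines of Lemma 4.1 in \cite{SW}. Assume there is a subsequence $g_k\to\infty$ and points $p_k\in\mathcal{L}(\mathcal{C}^+(\Sigma_{g_k}))$ with $p_k\to p_\infty$ satisfying $z(p_\infty)=h>0$. By definition of the asymptotic cone, for each $g_k$ and each scale $R$ beyond a threshold $R^*(g_k)$ one can find a point $x_k(R)\in\Sigma_{g_k}\cap\partial B_R(0)$ with direction within $1/k$ of $p_k$. The plan is to select a diagonal sequence $R_k\to\infty$, set $x_k:=x_k(R_k)$, and consider the rescaled surfaces $\tilde\Sigma_k:=R_k^{-1}\Sigma_{g_k}$ together with the rescaled points $\tilde x_k:=x_k/R_k$, which satisfy $|\tilde x_k|=1$ and $z(\tilde x_k)\to h>0$. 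The main task will be to arrange that $\tilde\Sigma_k$ converges graphically to $2H$ on the fixed annulus $A:=B_2(0)\setminus B_{1/2}(0)$, since the persistence of a point $\tilde x_k$ with $z$-coordinate bounded below by $h/2$ will then contradict the graphical convergence to $H\subset\{z=0\}$ of uniformly small height on $A$.

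The graphical convergence $\tilde\Sigma_k\to 2H$ on $A$ will be extracted from Proposition \ref{decomp}: on any compact subset of $H\setminus C(0,r)$, the $\Sigma_{g_k}$ are a pair of normal graphs over $H$ with $C^1$-norms tending to zero, and dilation by $R_k^{-1}$ preserves $C^1$-graphicality while taking the graph domain on $A$ to the annular region $\{R_k/2\le |x'|_H\le 2R_k\}\subset H$. Thus, provided $R_k$ is chosen slowly enough relative to $g_k$, the surface $\Sigma_{g_k}$ remains a pair of small-height graphs on this growing annulus, and the rescaled surfaces $\tilde\Sigma_k$ inherit the desired graphical convergence to $2H$ on $A$.

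The principal technical obstacle is the interleaving of the two rates: the asymptotic-cone scale $R^*(g)$ needed to realize the point $x_k(R_k)$ depends on $g$, while the graphical-convergence threshold provided by Proposition \ref{decomp} requires $g$ large depending on $R_k$. Both quantities are finite for each fixed $g$ and $R$ individually, so a standard diagonal extraction--as carried out in Lemma 4.1 of \cite{SW}--produces a subsequence along which both conditions hold simultaneously, yielding the contradiction and completing the proof.
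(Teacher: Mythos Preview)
Your first paragraph is fine: non-compactness follows from the smooth multiplicity-two convergence to $H$ on compact sets (Proposition~\ref{decomp}), which forces $\Sigma_g$ to have support in any fixed annulus for $g$ large.

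The second part, however, contains a genuine gap in the diagonal extraction. You need a pair $(g_k,R_k)$ satisfying simultaneously $R_k\ge R^*(g_k)$ (so that $\Sigma_{g_k}\cap\partial B_{R_k}$ sees the bad link direction) and $g_k\ge G(R_k)$ (so that Proposition~\ref{decomp} makes $\Sigma_{g_k}$ graphical on $\{R_k/2\le|x'|\le 2R_k\}$). These two thresholds pull in opposite directions: $R^*(g)$ is determined by how slowly the end of $\Sigma_g$ approaches its asymptotic cone, while $G(R)$ is determined by how slowly the sequence $\Sigma_g$ converges to $2H$ on the compact annulus of radius $R$. There is no a priori relation between them, and ``both are finite'' does not by itself yield a compatible diagonal subsequence; if, say, $R^*(g)$ grows faster than $G^{-1}(g)$ the required pair never exists. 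Your appeal to Proposition~\ref{decomp} is therefore circular: you are trying to prove that the cone of $\Sigma_{g_k}$ is near $H$, but the graphicality you need at scale $R_k\ge R^*(g_k)$ is precisely a statement about that cone.

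The paper's proof (which is the actual content of the Sun--Wang argument you cite) bypasses this entirely via the Colding--Minicozzi monotonicity: for a self-shrinker $\Gamma$, the quantity $s\mapsto F_{sy,\,1+as^2}(\Gamma)$ is non-increasing. A bad point $\rho_g y_g\in\Sigma_g$ with $\rho_g\to\infty$ and $y_g\to y\in\{z>0\}$ gives $F_{\rho_g y_g,\,1+a_g\rho_g^2}(\Sigma_g)\ge 1-1/g$ by smoothness; monotonicity then transports this lower bound down to any \emph{fixed} scale $\rho$, yielding $F_{\rho y,1}(2H)=\lim_g F_{\rho y_g,\,1+a_g\rho^2}(\Sigma_g)\ge 1$ via the varifold convergence $\Sigma_g\to 2H$ (which now only needs to hold on a fixed compact set). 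Sending $\rho\to\infty$ contradicts $y\notin H$. The monotonicity formula is exactly the device that moves information from the uncontrolled scale $\rho_g$ back to a controlled one, and it is what your rescaling argument is missing.
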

\begin{proof}

For each $\epsilon\geq 0$ let us denote the conical tubular neighborhood (using cylindrical coordinates for $\mathbb{R}^3$):
\begin{equation}
\mathcal{U}_\epsilon =\{(r, \theta, z)\in\mathbb{R}^3\;|\; |z|\leq \epsilon r\}.  
\end{equation}
Fix $\epsilon>0$.  We claim for any $R>L$ and $g$ large enough (depending on $\epsilon$ and $R$): 
\begin{equation}\label{contra}
\Sigma_g\setminus B_{R}(0) \subset \mathcal{U}_\epsilon.   
\end{equation}
Suppose not.  Then there are a subsequence of $g$ (not relabelled) with $\rho_g y_g\in \partial\mathcal{U}_\epsilon$ where $\rho_g\to\infty$, $|y_g|=1$ and $y_g$ is contained in the northern hemisphere of $\mathbb{S}^2$. We can assume that up to a subsequence $y_g\to y$, where $y$ is contained in $\{z>0\}$.  Since $\Sigma_g\rightarrow 2H$ on compact subsets,  it follows from Proposition \ref{shown} that $\rho_g\to\infty$.   Because $\Sigma_g$ is smooth, there exists $s_g>0$ small enough so that 
\begin{equation}\label{god}
F_{\rho_g y_g, s_g}(\Sigma_g)>1-1/g.
\end{equation}
Choose $a_g=(s_g-1)/\rho_g^2$  so that by rewriting \eqref{god} we obtain for each $g$ 
\begin{equation}\label{hm}
F_{\rho_g y_g, 1+a_g\rho_g^2}(\Sigma_g)>1-1/g.
\end{equation}
\noindent
Note that we may assume $a_g\to 0$.

Recall that for $\Gamma$ a self-shrinker,  we have (equation 7.13 in \cite{CM}) for $y\in \mathbb{R}^3$ and $a\in\mathbb{R}$ and $s>0$
\begin{equation}
F_{sy, 1+as^2}(\Gamma)\mbox{ is non-increasing in } s.  
\end{equation}

Applying this monotonicity formula to \eqref{hm} we get for any $\rho>0$
\begin{equation}\label{lower}
F_{\rho y, 1}(2H) = \lim_{g\to\infty} F_{\rho y_g, 1+a_g\rho^2}(\Sigma_g) \geq \liminf_{g\to\infty} F_{\rho_g y_g, 1+a_g\rho_g^2}(\Sigma_g)\geq 1.
\end{equation}
The equality in \eqref{lower} is from the continuity of the $F$-functionals with respect to varifold convergence (cf. Proposition 2.4 in \cite{SW}) together with the fact that $a_g\to 0$.  On the other hand,  as $\rho\to\infty$, we have \begin{equation} F_{\rho y, 1}(2H)\to 0,\end{equation} since $y$ is not in the support of $H$.

Since \eqref{contra} holds for genera $g$ sufficiently large depending on $\epsilon>0$, and $\Sigma_g\setminus B_R(0)$ is disjoint from $H$ by Proposition \ref{shown}, the conclusion follows.   
\end{proof}

We now show there are two distinguished ``large" ends to $\Sigma_g$:
\begin{lemma}\label{homologous}
Fix $\epsilon<1/2$.  Then for each $g$ large enough,  there is exactly one index $i\in\{1,...,N_g\}$ with the property that
\begin{equation}\label{arenontrivial}
\mathcal{C}_g^i\cap\mathbb{S}^2 \mbox{ is homotopic in }  \mathcal{U}_\epsilon\cap\mathbb{S}^2 \mbox{ to the equator }\{z=0\}\cap\mathbb{S}^2.  
\end{equation}
\end{lemma}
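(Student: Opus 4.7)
The plan is to combine a linking-number argument for existence of at least one equator-homotopic link with an entropy-based length estimate on the asymptotic cone for uniqueness.

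For setup and existence, by Proposition \ref{shown}, for $g$ large the end $E_g^+$ is a connected genus-zero surface with single boundary circle $A := \partial E_g^+ \subset \partial B_R(0)$. Proposition \ref{decomp} and the smooth convergence $\Sigma_g \to 2H$ away from $C(0,r)$ force $A$ to be $C^0$-close to the equator $H \cap \partial B_R$, so $A$ has linking number $\pm 1$ with the $z$-axis. By Lemma \ref{hausdorff}, $E_g^+ \setminus B_R \subset \mathcal{U}_\epsilon$, and since $\mathcal{U}_\epsilon$ meets the $z$-axis only at the origin, $E_g^+$ is disjoint from the $z$-axis. By the classification of ends in \cite{LW, SW}, each $\mathcal{C}_g^i$ is either a regular cone whose link is a simple closed curve $L_i$, or a ray (cylindrical end, point link). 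For $s \gg R$ the surface $E_g^+ \cap (B_s \setminus B_R)$ is a compact oriented surface disjoint from the $z$-axis, whose boundary is $A$ together with curves $\gamma_i(s) \subset \partial B_s$ that $C^0$-approximate $s \cdot L_i$; by cobordism invariance of linking number with the $z$-axis, $\sum_i w(L_i) = \pm 1$, where $w(L_i)$ denotes the winding of $L_i$ in the annulus $\mathcal{U}_\epsilon \cap \mathbb{S}^2$. Cylindrical ends contribute $w = 0$, and a simple closed curve in an annulus has $|w| \leq 1$, so at least one conical $L_i$ must satisfy $|w(L_i)| = 1$, hence is homotopic to the equator.

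For uniqueness, the scaling identity $F_{0,1}(\rho\Sigma_g) = F_{0,1/\rho^2}(\Sigma_g)$ together with the convergence $\rho\Sigma_g \to \mathcal{C}(\Sigma_g)$ on compact subsets of $\mathbb{R}^3 \setminus \{0\}$ as $\rho \to 0^+$ and a Gaussian-tail control yields $F(\mathcal{C}(\Sigma_g)) = \lim_{\tau\to\infty} F_{0,\tau}(\Sigma_g) \leq \lambda(\Sigma_g) < 2$. Reflection by $\tau_H$ and the fact that $\mathcal{C}^{\pm}(\Sigma_g)$ meet only at the origin give $F(\mathcal{C}^+(\Sigma_g)) < 1$. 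A direct polar-coordinate computation shows $F(C_L) = \mathcal{H}^1(L)/(2\pi)$ for the cone over $L \subset \mathbb{S}^2$. For any simple closed curve $L$ in $\mathcal{U}_\epsilon \cap \mathbb{S}^2$ with $|w(L)| = 1$, the azimuthal-projection bound using $\sin\phi \geq 1/\sqrt{1+\epsilon^2}$ throughout $\mathcal{U}_\epsilon \cap \mathbb{S}^2$ gives $\mathcal{H}^1(L) \geq 2\pi/\sqrt{1+\epsilon^2}$, hence $F(C_L) \geq 1/\sqrt{1+\epsilon^2} \geq 2/\sqrt{5}$ when $\epsilon \leq 1/2$. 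Two such ends would then force $F(\mathcal{C}^+(\Sigma_g)) \geq 4/\sqrt{5} > 1$, contradicting the entropy bound. Combining with existence yields exactly one such $L_i$.

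The main technical obstacle is justifying the identity $F(\mathcal{C}(\Sigma_g)) = \lim_{\tau\to\infty} F_{0,\tau}(\Sigma_g)$ for an asymptotically conical shrinker that may also have cylindrical ends (whose $1$-dimensional asymptotic pieces contribute zero to $F$); this is standard but requires controlling the Gaussian-weighted area uniformly both near the vertex of the cone and at infinity along the rescaling.
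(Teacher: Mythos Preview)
Your argument is correct and the existence half is essentially identical to the paper's: both use that $\partial E_g^+$ is a single curve linking the $z$-axis once, so the boundary of the planar domain $E_g^+$ cut off at large radius is null-homologous in the solid conical torus, forcing an odd number (hence at least one) of longitudinal link curves.

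For uniqueness the two proofs diverge slightly. The paper invokes the full strength of Lemma~\ref{hausdorff} (Hausdorff convergence of the links to the equator) to conclude that each equator-homotopic link has $F_{0,1}(\mathcal{C}_g^i)>1-\delta$ for $g$ large depending on $\delta$; two such then exceed the bound $\sum_i F_{0,1}(\mathcal{C}_g^i)\le 1$. Your route is more elementary: you use only the containment in $\mathcal{U}_\epsilon$ (not the Hausdorff limit), compute $F(C_L)=\mathcal{H}^1(L)/(2\pi)$ directly, and bound the length of any winding-one curve in the thin spherical annulus from below by $2\pi/\sqrt{1+\epsilon^2}$, yielding $F\ge 2/\sqrt 5$ for $\epsilon\le 1/2$; two such already exceed $1$. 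This buys you a quantitative statement valid uniformly in $g$ once the ends lie in $\mathcal{U}_\epsilon$, whereas the paper's version is asymptotic in $g$.

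Regarding your stated technical obstacle: the paper sidesteps the identity $F(\mathcal{C}(\Sigma_g))=\lim_{\tau\to\infty} F_{0,\tau}(\Sigma_g)$ entirely by quoting that entropy is non-increasing under mean curvature flow, so $F_{0,1}(\mathcal{C}(\Sigma_g))\le\lambda(\Sigma_g)<2$ directly (the asymptotic cone being the $t\to 0^-$ limit of the shrinking flow). You can do the same and avoid the tail analysis.
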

\begin{proof}
For $L_g>L$ chosen large enough, it follows from Lemma \ref{hausdorff} that $\Sigma_g\cap (\partial B_{L_g}(0)\cap\mathcal{U}_\epsilon)$ consists of $N_g$ curves. Suppose we order the curves so that the first $N_g'\leq N_g$ are longitudinal curves in the solid (piecewise smooth) torus $T=\mathcal{U}_\epsilon\cap (B_{L_g}(0)\setminus B_L(0))\cap\{z>0\}$.  Note that $N_g'+1$ is even since $\partial(\Sigma_g\cap T)$ is null-homologous in $T$ and since $\Sigma_g\cap \partial B_L(0)$ consists of one curve by Proposition \ref{shown}.  Thus $N_g'$ is odd and in particular at least $1$.  Since $L_g$ may taken arbitrarily large without changing $N_g'$, this gives \eqref{arenontrivial} for at least one suitable index $i$.  
To see that $N_g'=1$, observe that since entropy is non-increasing under MCF:
\begin{equation}\label{up}
\sum_{i=1}^{N_g} (F_{0,1}(\mathcal{C}^i_g)+F_{0,1}(\mathcal{D}^i_g))\leq\lambda(\Sigma_g)\leq 2, 
\end{equation}
and thus by symmetry
\begin{equation}\label{upp}
\sum_{i=1}^{N_g} F_{0,1}(\mathcal{C}^i_g)\leq 1.
\end{equation}
On the other hand,  in light of \eqref{arenontrivial} and Lemma \ref{hausdorff},  for $i=1,...,N_g'$,  and any $\delta>0$ we obtain that \begin{equation}F_{0,1}(\mathcal{C}^i_g)> 1-\delta\end{equation} for $g$ large enough.  If $N_g'>1$ this contradicts \eqref{upp}.  
\end{proof}

Our goal us to show that $E_g^\pm$ is each diffeomorphic to an infinite annulus $\mathbb{S}^1\times[0,\infty)$.   Equivalently,  our goal is to show $N_g=1$ for each $g$ large enough.   The danger is that $E_g^\pm$ has ends sprouting up at larger and larger radii on $\Sigma_g$.

We need Ecker's localized version of Huisken's monotonicity formula \cite{E2}.  Namely,  suppose $\mathcal{M}= M_t$ is an integral Brakke flow and set $X_0=(x_0,t_0)$.   For 

\begin{equation}\phi_{X_0}(x,t) = (1-\frac{|x-x_0|^2+2n(t-t_0)}{\rho^2})^3_+\end{equation} 

and

\begin{equation}
\rho_{X_0}(x,y) = (4\pi(t_0-t))^{-1/2} e^{-\frac{|x-x_0|^2}{4(t_0-t)}}
\end{equation}

The Gaussian density ratio 
\begin{equation}
\Theta(\mathcal{M}, X_0,r) = \int_{M_{t_0-r^2}} \rho_{X_0}\phi^\rho_{X_0}d\mu
\end{equation}
is non-increasing in $r$.  

The density at $X_0$ is defined as
\begin{equation}
\Theta(\mathcal{M}, X_0)=\lim_{r\to 0} \Theta(\mathcal{M}, X_0,r).
\end{equation}

The density $\Theta(\mathcal{M}, X_0)$ is equal to $1$ when $X_0$ is a smooth point of the flow. We need one version of White's version of Brakke regularity theorem for limits of Brakke flows:

\begin{thm}[Brakke regularity theorem \cite{W}]\label{brakke}
Suppose $\mathcal{M}_i$ are smooth Brakke flows with $\mathcal{M}_i\rightharpoonup\mathcal{M}$.  If \begin{equation}
\Theta(\mathcal{M},X_0) = 1, 
\end{equation}
then there is an open neighborhood of $X_0$ in space-time so that in this neighborhood, the convergence of $\mathcal{M}_i$ to $\mathcal{M}$ is smooth.
\end{thm}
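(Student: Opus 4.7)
The plan is to reduce to White's local $\varepsilon$-regularity theorem for Brakke flows, which asserts the existence of universal constants $\varepsilon_0>0$ and $\alpha\in (0,1)$ such that any integral Brakke flow $\mathcal{N}$ with $\Theta(\mathcal{N},Y,r)<1+\varepsilon_0$ for every space-time point $Y$ in a parabolic ball and every $r$ below some scale is smooth on a slightly smaller parabolic ball, with $C^{2,\alpha}$ estimates depending only on $\varepsilon_0$ and the geometry of the ball. Granting this, the task is to transfer the pointwise hypothesis $\Theta(\mathcal{M},X_0)=1$ into a uniform density ratio bound on a parabolic neighborhood of $X_0$ that is inherited by the smooth flows $\mathcal{M}_i$ for all large $i$.

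First I would invoke the monotonicity formula for $\mathcal{M}$: the map $r\mapsto \Theta(\mathcal{M},X_0,r)$ is non-increasing with limit $1$ as $r\to 0$, so there is $r_0>0$ with $\Theta(\mathcal{M},X_0,r_0)<1+\varepsilon_0/4$. Next, testing the localized Huisken--Ecker density integral $\int \rho_{X_0}\phi_{X_0}\,d\mu$ against the smooth compactly supported cutoff $\phi_{X_0}$ produces a functional that is continuous, not merely upper semi-continuous, under weak convergence of the time slice $M_{t_0-r_0^2}^i \rightharpoonup M_{t_0-r_0^2}$, so the analogous bound $\Theta(\mathcal{M}_i,X_0,r_0)<1+\varepsilon_0/3$ holds for all sufficiently large $i$. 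A short continuity-in-basepoint argument, using that varying $Y$ in a small parabolic neighborhood $P$ of $X_0$ perturbs $\rho_Y\phi_Y$ uniformly, combined with monotonicity at each $Y$, propagates the bound to $\Theta(\mathcal{M}_i,Y,r)<1+\varepsilon_0/2$ for every $Y\in P$, every $r$ below some threshold, and all large $i$.

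Once this uniform density estimate holds on $P$, White's $\varepsilon$-regularity theorem applies to each $\mathcal{M}_i$ and yields a smaller parabolic neighborhood $P'\subset P$ on which $\mathcal{M}_i$ is a smooth mean curvature flow with $C^{2,\alpha}$ bounds independent of $i$. Schauder bootstrapping through the mean curvature flow equation upgrades this to uniform $C^k$ bounds for every $k$. Arzel\`a--Ascoli then produces smooth subsequential convergence on $P'$; by the hypothesis $\mathcal{M}_i\rightharpoonup \mathcal{M}$, any smooth subsequential limit must coincide with $\mathcal{M}$, and since this forces the limit for every subsequence, the full sequence $\mathcal{M}_i$ converges smoothly to $\mathcal{M}$ on $P'$, with $\mathcal{M}$ itself smooth there.

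The main obstacle is the passage from the pointwise hypothesis on $\mathcal{M}$ to a robust uniform density ratio bound for the approximants, because Brakke flows can in principle drop mass suddenly, and densities are generally only upper semi-continuous under weak convergence. What rescues the argument is that the $\mathcal{M}_i$ are assumed \emph{smooth} (so their densities at any scale coincide with the value of the monotone density quantity, with no defect), combined with the fact that the hypothesis $\Theta(\mathcal{M},X_0)=1$ sits at the extreme minimum allowed by monotonicity, leaving no room for density to jump above $1+\varepsilon_0$ either on $\mathcal{M}$ at small scales or on $\mathcal{M}_i$ in the limit; this rigidity is what makes White's $\varepsilon$-regularity criterion genuinely applicable uniformly in $i$.
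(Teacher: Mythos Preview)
The paper does not supply a proof of this theorem at all: it is stated as a known result (White's version of the Brakke regularity theorem, with a citation to \cite{W}) and used as a black box in the subsequent argument on the ends of $\Sigma_g$. So there is no ``paper's own proof'' to compare against.

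Your sketch is the standard route to this result and is essentially correct in outline: pass from the pointwise density hypothesis to a uniform density-ratio bound at some fixed scale $r_0$ via Huisken--Ecker monotonicity, transfer that bound to the approximating flows $\mathcal{M}_i$ by continuity of the localized Gaussian integral under weak convergence, spread it to nearby basepoints by the continuity of $Y\mapsto \rho_Y\phi_Y$, and then invoke White's local $\varepsilon$-regularity theorem to obtain uniform $C^{2,\alpha}$ (hence $C^k$) bounds and conclude smooth convergence by Arzel\`a--Ascoli. One small caveat: the step ``continuity under weak convergence of the time slice $M^i_{t_0-r_0^2}\rightharpoonup M_{t_0-r_0^2}$'' is delicate, since Brakke flow convergence does not guarantee Radon-measure convergence of time slices at \emph{every} fixed time; one usually handles this either by choosing $r_0$ generically so that convergence does hold at that time, or by integrating the monotone quantity over a short time interval. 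With that adjustment, your argument is the expected one.
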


With this background, we can now show
\begin{prop}
For $g$ large enough,  $E_g^\pm$ are annuli that may be expressed as normal graphs over the plane $H$.
\end{prop}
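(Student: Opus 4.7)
The plan is to apply Brakke's regularity theorem (Theorem \ref{brakke}) separately to the upper component $E_g^+$, exploiting that this single sheet converges to the plane $H$ with multiplicity \emph{one}, as opposed to the multiplicity-two limit of the full surface $\Sigma_g\to 2H$, so that Brakke regularity applies nonvacuously. Combined with the confinement provided by Lemma \ref{hausdorff} and a Euler-characteristic count, this will force $E_g^+$ to be an annulus and, via the horizontal projection, a normal graph over $H$ minus a disk.

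Work along the subsequence $J$ of Proposition \ref{decomp}, and choose $R>r$ in Proposition \ref{shown} so that the singular circle $C(0,r)$ lies inside $B_R(0)$ and does not interact with $E_g^+$. Since $E_g^+\subset\{z>0\}$, upper semicontinuity of Gaussian density in the weak convergence $\sqrt{-t}\Sigma_g\rightharpoonup 2\mathcal{H}$ forces the varifold limit of $E_g^+$ to be supported in $H$; combined with Proposition \ref{decomp} identifying the upper sheet outside $\overline{B}_R(0)$ as part of $E_g^+$, the limit is $\mathcal{H}^2\mres(H\setminus\overline{B}_R(0))$ with multiplicity one. At each space-time point $(x_0,t_0)$ with $x_0\in H$ and $|x_0|>\sqrt{-t_0}\,R$, the limiting Brakke flow has Gaussian density exactly one, so Theorem \ref{brakke} yields smooth convergence of $\sqrt{-t}E_g^+$ in a space-time neighborhood; restricting to $t=-1$, for $g$ large each compact $K\subset H\setminus\overline{B}_R(0)$ is covered by a single positive smooth normal graph of $E_g^+$ over $K$ converging smoothly to zero.

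To globalize, invoke Lemma \ref{hausdorff}: for $g$ large one has $E_g^+\subset\mathcal{U}_\epsilon=\{|z|\leq\epsilon|x_H|\}$. For $\epsilon$ small this cone contains no point above the disk $H\cap\overline{B}_R(0)$ (such a point would require $z\geq\sqrt{R^2-|x_H|^2}$, incompatible with $|z|\leq\epsilon|x_H|$), so the horizontal projection from $E_g^+$ to $H$ lands in $H\setminus\overline{B}_{R'}(0)$ for some $R'<R$; by the previous paragraph it is a local diffeomorphism, and by connectedness of $E_g^+$ (Proposition \ref{shown}) it is a covering map of the annular target of some degree $d$. The Euler-characteristic identity $\chi(E_g^+)=d\cdot\chi(H\setminus\overline{B}_{R'}(0))=0$ together with the genus-zero formula $\chi(E_g^+)=1-N_g$ (one boundary circle plus $N_g$ ends) forces $N_g=1$, so $E_g^+$ is an annulus. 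Furthermore, $\partial E_g^+$ is a single simple closed curve whose horizontal projection lies on a compact subset of $H\setminus C_r$ and, by Proposition \ref{decomp}, converges smoothly to the once-traversed equator circle $H\cap\partial B_R(0)$; since a simple closed curve can cover a circle only with degree one, $d=1$ and the projection is a diffeomorphism. Hence $E_g^+$ is a single normal graph over $H\setminus\overline{B}_{R'}(0)$, and $E_g^-=\tau_H(E_g^+)$ by symmetry.

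The main obstacle in this program is ensuring that the multiplicity-one identification of the varifold limit of $E_g^+$ truly holds across \emph{all} of $H\setminus\overline{B}_R(0)$ and that the horizontal projection is proper — equivalently, ruling out additional ``small'' ends of $E_g^+$ beyond the main equatorial end of Lemma \ref{homologous} that could escape to infinity faster than any prescribed compact set, thereby polluting the varifold limit at infinity without being detected on compact regions. By $\mathbb{P}_{g+1}$-equivariance any such off-axis end comes in a rotational orbit of size $g+1$, and one must combine this with the entropy residual $\sum_{i\geq 2}F(\mathcal{C}_g^i)\to 0$ extracted in the proof of Lemma \ref{homologous}, applying Brakke regularity at blow-down scales to rule out the persistence of any such orbit. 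Carrying out this delicate estimate at infinity, where compact-set Brakke regularity is not directly applicable, is the principal technical challenge.
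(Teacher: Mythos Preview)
Your proposal identifies the right analytic engine---Brakke regularity applied to the \emph{single} sheet $E_g^+$, whose limit has multiplicity one---and this is exactly what the paper uses. But the gap you flag in your final paragraph is the entire content of the proposition, and you have not filled it. Your second paragraph establishes smooth graphical convergence only on each fixed compact $K\subset H\setminus\overline{B}_R(0)$, with $g$ depending on $K$; this is already Proposition \ref{decomp} and does not by itself rule out vertical tangent planes at points $p_g\in E_g^+$ with $|p_g|\to\infty$. Without that, your horizontal projection is not known to be a local diffeomorphism, and the covering-map and Euler-characteristic steps cannot be invoked.

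The paper closes this gap not by chasing small ends or entropy residuals, but by exploiting self-similarity to trade spatial escape for temporal approach to $t=0$. Concretely: suppose $p_g\in E_g^+$ has vertical tangent plane and $R_g:=|p_g|\to\infty$, and (after passing to a subsequence) $R_g^{-1}p_g\to q\in\mathbb{S}^2\cap H$. Consider the Brakke flows $t\mapsto \sqrt{-t}\,E_g^+$ restricted to $B_{1/2}(q)$. For each fixed $t<0$ these converge to $H$ with multiplicity one (this is your compact-set statement, rescaled); at $t=0$ the asymptotic cones converge to $H$ as currents by Lemma \ref{hausdorff}, giving $\Theta\ge 1$ at $(q,0)$. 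Ecker's localized monotonicity then forces $\Theta(\hat\mu,(q,0))=1$, so Theorem \ref{brakke} gives smooth convergence on a full backwards parabolic neighborhood $B_{1/4}(q)\times[-t_0,0]$. Now evaluate at $t_g=-R_g^{-2}$: the point $\sqrt{-t_g}\,p_g$ lies in $B_{1/4}(q)$ but carries a vertical tangent plane (homothety preserves angles), contradicting graphicality. This parabolic rescaling is the missing idea; once you have it, graphicality is immediate and the covering-map apparatus becomes unnecessary, since a graph is already a diffeomorphism onto its image.
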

\begin{proof}
Assume toward a contradiction for some subsequence (not relabelled) $g\to\infty$ there are points $p_g\in E_g^+$ so that the tangent plane to $\Sigma_g$ is orthogonal to $H$ at $p_g$ and $R_g=|p_g|\to\infty$.    


Pass to a subsequence of $g$ so that $q$ is the limit of $R_g^{-1}p_g$ with $q\in\mathbb{S}^2\cap H$.  Thus there exists $\epsilon_g\to 0$ so that
\begin{equation}\label{meaninglimit}
|q-R_g^{-1}p_g|\leq\epsilon_g.
\end{equation}

Let us consider the flows \begin{equation}\label{inball}\overline{\Sigma}_g(t) := \sqrt{-t}\Sigma_g\mres B_{1/2}(q)\end{equation} for $t\in [-1,0]$.  

For $t\in [-t_0,0]$ for some $t_0$ small enough, by Proposition \ref{shown} for all $g$ large enough, the flow \eqref{inball} is disjoint from the plane $H$.    Let $\hat{\Sigma}_g (t)$ denote the connected components of $\overline{\Sigma}_g(t)$ with positive $z$-coordinate and let $\{\hat{\mu}^g_t\}_{t\in [-t_0,0]}$ denote the corresponding integral Brakke flows in $B_{1/2}(q)$.  By the compactness theorem for Brakke flows,  we may extract a weak limit \begin{equation}\hat{\mu}_t^g\rightharpoonup\hat{\mu}_t.\end{equation}  From the definition of weak convergence for such flows,  $\hat{\mu}_t$ is the varifold limit of $\hat{\mu}^g_t$ for each $t\in [-t_0,0]$.

We claim $\hat{\mu}_t$ is the static flow of planes near $q$ for $t\in [-t_0,0]$. For fixed $t\in [-t_0,0)$, observe that the varifold $\hat{\mu}_t^g$ converges to $H \mres B_{1/2}(q)$ as $g\to\infty$.  Indeed by Proposition \ref{mainsec}, since $\Sigma_g\to 2H$ on compact subsets of $H$, it follows that for any fixed $\lambda>0$, $\lambda \Sigma_g\to 2\lambda H$ on any compact set as $g\to\infty$.  Setting $\lambda = \sqrt{-t}$ gives the claim noting that $H$ is invariant under homothety. 

For $t=0$,  on the other hand, Lemma \ref{hausdorff} implies that the collection of asymptotic cones $\hat{\mu}_0^g$ converges in the sense of integral currents to $H$ as $g\to\infty$. It follows that  \begin{equation}\label{lowerdensity} \Theta(\hat{\mu}_0, (q,0))\geq 1.\end{equation}   
Applying Ecker's localized monotonicity formula together with \eqref{lowerdensity} gives
\begin{equation}\label{mon}
\lim_{r\to 0} \Theta(\hat{\mu}_t, (q,0),r^2) \geq \Theta(\hat{\mu}_0,(q,0))\geq 1. 
\end{equation}
Since $\hat{\mu}_t$ consists of an open subset of a plane for $t<0$, the limit in \eqref{mon} is $1$, and thus 
\begin{equation}
\Theta(\hat{\mu}_t, (q,0))=1.
\end{equation}

It follows that $\{\hat{\mu}_t\}_{t\in [-t_0,0]}$ is the static flow of planes in $B_{1/2}(q)$.   By Theorem \ref{brakke} it follows that $\hat{\mu}_t^g$ converges smoothly to the static flow $\hat{\mu}_t$ in a backwards parabolic ball $ B_{1/4}(q)\times [-t_0/2,0]$. 

On the other hand, setting $t_g=-\frac{1}{R_g^2}$, we get by \eqref{meaninglimit} that for the previously chosen $p_g\in\Sigma_g$ and $g$ large enough
\begin{equation}\label{stillclose}
|q-\sqrt{-t_g}p_g|\leq\epsilon_g\leq\frac{1}{4}
\end{equation}

By the choice of $p_g$, the equation \eqref{stillclose} together with the fact that homotheties preserve angles imply that $\hat{\mu}^g_{t_g}$ is \emph{not} graphical in the ball $B_{1/4}(q)$ for the sequence of times $t_g\to 0$. This is a contradiction.
\end{proof}

\section{Stability operator $L_{0,1}$ on $H$}\label{appendix}
The stability operator on $H$ is given by (equation 5.1 in \cite{CM}) 
\begin{equation}
L_H\phi =\Delta_{\mathbb{R}^2}\phi-\frac{1}{2}\langle x, \nabla\phi\rangle + \frac{1}{2}\phi.
\end{equation}
In polar coordinates $(r,\theta)$ this becomes
\begin{equation}\label{inpolar}
L_H\phi  = \phi_{rr}+\frac{1}{r}\phi_r+\frac{1}{r^2}\phi_{\theta\theta} -\frac{r}{2}\phi_r + \frac{1}{2}.
\end{equation}
We will need to study solutions $\phi$ to $L_H\phi=0$.   Assuming the solution $\phi=\phi(r)$ is rotationally symmetric we get
\begin{equation}
L_H\phi  = \phi_{rr}+(\frac{1}{r}-\frac{r}{2})\phi_r + \frac{1}{2}.
\end{equation}
Let us make the change of variable $\xi = r^2/4$.   Thus
\begin{equation}
\phi_r = \frac{r}{2} \phi_\xi,
\end{equation}
and
\begin{equation}
\phi_{rr} = \frac{1}{2} \phi_{\xi}+\frac{r^2}{4} \phi_{\xi\xi} = \frac{1}{2} \phi_{\xi}+\xi \phi_{\xi\xi}.
\end{equation}
Thus we obtain that the rotationally invariant solutions to $L_H\phi=0$ satisfy
\begin{equation}\label{che}
\xi\phi_{\xi\xi} + (1-\xi)\phi_\xi + \frac{1}{2}\phi = 0.
\end{equation}
The equation \eqref{che} is a confluent hypergeometric equation studied by Kummer in 1837 (\cite{Ku}).   In general,  one solution 
to 
\begin{equation}\label{general}
\xi\phi_{\xi\xi} + (b-\xi)\phi_\xi - a\phi = 0, 
\end{equation}
is given by 
\begin{equation}
M(a,b,\xi) = 1+\frac{a}{b\cdot 1}\xi + \frac{a(a+1)}{b(b+1)\cdot 1\cdot 2}\xi^2 +..., 
\end{equation}
or 
\begin{equation}\label{series}
M(a,b,\xi) = \sum_{k=0}^\infty \frac{(a)_k}{(b)_k\cdot k!} \xi^k, 
\end{equation}
where the Pockhammer symbol $(a)_k$ is defined as
\begin{equation}
(a)_k = a(a+1)(a+2)...(a+k-1).
\end{equation}
This solution has asymptotics
\begin{equation}
M(a,b,\xi) \sim  \frac{e^\xi \xi^{a-b}}{\Gamma(a)} \mbox{ as } \xi\rightarrow\infty.
\end{equation}
\noindent
Thus one solution to \eqref{che} can be denoted \begin{equation}\phi_1(r) := M(-\frac{1}{2},1,r^2/4)\end{equation} and 
\begin{equation}
\phi_1(r) \sim -\frac{4r^{-3}e^{r^2/4}}{\sqrt{\pi}} \mbox{ as } r\rightarrow\infty.
\end{equation}
Observe that $\phi_1(0) = 1$ and from differentiating the series expansion we get that $\phi'_1(r)<0$ and $\phi''_1(r)<0$ for $r>0$.   It follows that $\phi_1$ has precisely one zero at $r=r_1$.   


A second linearly independent solution $U(a,b, \xi)$ to \eqref{general} is known as the Tricomi function with asymptotics 
\begin{equation}
U(a,b,\xi) = \xi^{-a} \mbox{ for large } \xi.
\end{equation}
\noindent
Thus we may set 
\begin{equation}\phi_2(r):=U(-\frac{1}{2},1,r^2/4)\footnote{In fact there is an explicit formula
\begin{equation}
U(-\frac{1}{2},1,\xi) = e^{\xi/2}((\xi-1)K_0(\xi/2) + \xi K_1(\xi/2)), 
\end{equation}
where $K_0$ and $K_1$ denote the modified Bessel functions of the second kind,  which decay with rate $\frac{e^{-\xi}}{\sqrt{\xi}}$ as $\xi\rightarrow\infty$.} \end{equation} 
which satisfies
\begin{equation}
\phi_2(r) \sim \frac{r}{2} \mbox{ for large } r.
\end{equation}
Moreover, as $r\to 0$, $\phi_2(r)\to-\infty$ logarithmically.

One also finds numerically that the function $\phi_2(r)$ has precisely one zero $r_2$ with $r_2<r_1$.


\begin{prop}\label{nofunction}
There exists no positive radial function $\phi=\phi(r)\in C^\infty(H\setminus (0,0))$ satisfying
\begin{equation}
L_{H}\phi = 0.
\end{equation}
\end{prop}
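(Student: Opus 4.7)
The plan is to use the explicit description of the two-dimensional space of radial solutions to $L_H\phi=0$ given earlier in the appendix. Any radial $C^\infty$ solution on $(0,\infty)$ satisfies Kummer's equation \eqref{che} after the substitution $\xi = r^2/4$, so every such $\phi$ may be written as $\phi = a\phi_1 + b\phi_2$ for constants $a,b\in\mathbb{R}$, where $\phi_1(r) = M(-\tfrac{1}{2},1,r^2/4)$ and $\phi_2(r) = U(-\tfrac{1}{2},1,r^2/4)$. I would then show that no choice of $(a,b)\neq(0,0)$ produces a function that is positive on all of $(0,\infty)$.

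The argument rests entirely on the sign patterns already tabulated in the appendix. Recall $\phi_1(0)=1$, $\phi_1$ is strictly decreasing with its unique zero at $r_1$, and $\phi_1(r)\to-\infty$ exponentially as $r\to\infty$; whereas $\phi_2(r)\to-\infty$ logarithmically as $r\to 0^+$, $\phi_2$ has its unique zero at some $r_2 < r_1$, and $\phi_2(r)\sim r/2$ as $r\to\infty$. Consequently, on the three intervals $(0,r_2)$, $(r_2,r_1)$, $(r_1,\infty)$ the pair $(\phi_1,\phi_2)$ has signs $(+,-)$, $(+,+)$, $(-,+)$ respectively.

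I would then dispatch six cases. If $b=0$ and $a\neq 0$, then $\phi = a\phi_1$ changes sign at $r_1$; if $a=0$ and $b\neq 0$, then $\phi = b\phi_2$ changes sign at $r_2$. With both $a,b$ nonzero, four subcases remain:
\begin{itemize}
\item If $b>0$ (with either sign of $a$), then $\phi(r) = a\phi_1(r)+b\phi_2(r)\to -\infty$ as $r\to 0^+$, since the logarithmic blow-up of $b\phi_2$ dominates the bounded $a\phi_1$.
\item If $a>0$ and $b<0$, then as $r\to\infty$ the exponential $a\phi_1(r)\sim -\tfrac{4a}{\sqrt\pi}r^{-3}e^{r^2/4}$ dominates the linear $b\phi_2(r)\sim br/2$, forcing $\phi(r)\to-\infty$.
\item If $a<0$ and $b<0$, then on the middle interval $(r_2,r_1)$ both $\phi_1,\phi_2$ are strictly positive, so $\phi = a\phi_1+b\phi_2$ is strictly negative there.
\end{itemize}
In every case the positivity hypothesis fails, completing the proof.

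The only point requiring a small calculation is the exponential-versus-linear comparison in the case $a>0>b$, where one needs that $|a\phi_1(r)|$ beats $|b\phi_2(r)|$ for all sufficiently large $r$, regardless of the size of $|b|/a$; this is immediate from the asymptotic $\phi_1(r)\sim -\tfrac{4}{\sqrt\pi}r^{-3}e^{r^2/4}$ established above. Apart from this, the argument is pure bookkeeping on the six sign combinations, and no further analysis of Kummer's equation is required.
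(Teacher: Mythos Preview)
Your argument is correct and follows the same overall strategy as the paper: write any radial solution as a linear combination of $\phi_1$ and $\phi_2$ and use the sign information recorded in the appendix to exclude positivity. The paper's execution is a bit more economical, however. Rather than splitting into six sign cases and invoking the $r\to\infty$ asymptotics of $\phi_1$, the paper normalizes the $\phi_2$-coefficient to $1$ (the pure-$\phi_1$ case having already been noted to change sign) and then observes, uniformly in $\lambda$, that $\phi_\lambda(r):=\phi_2(r)+\lambda\phi_1(r)$ satisfies $\phi_\lambda(r_1)=\phi_2(r_1)>0$ (since $\phi_1(r_1)=0$ and $r_1>r_2$) while $\phi_\lambda(r)\to-\infty$ as $r\to 0^+$ (since the logarithmic blow-up of $\phi_2$ swamps the bounded $\lambda\phi_1(0)=\lambda$). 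Thus every nontrivial combination changes sign, and only the $r\to 0$ behavior plus a single evaluation at $r_1$ are needed; the exponential-versus-linear comparison you carry out for the case $a>0>b$ becomes unnecessary.
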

\begin{proof}
Since $\phi_1$  and $\phi_2$ change sign it suffices to show that for each $\lambda\neq 0\in\mathbb{R}$,  the function $\phi_\lambda(r):= 
\phi_2(r)+\lambda\phi_1(r)$ is not everywhere positive or everywhere negative.    But $\phi_\lambda(r_1) =\phi_2(r_1)>0$ and in light of the fact that $\phi_1(0)=1$ and  $\phi_2$ tends to $-\infty$ as $r\to 0$ we obtain that $\phi_\lambda(r)\rightarrow -\infty$ as $r\rightarrow 0$.   Since $\phi_\lambda$ takes on positive and negative values, this completes the proof.
\end{proof}


\printbibliography

\end{document}